\documentclass[11pt,leqno]{article}

\usepackage{amsfonts,latexsym,amsmath,amssymb,amsthm}
\usepackage{hyperref}
\usepackage{fullpage}

\newtheorem{theorem}{Theorem}[section]
\newtheorem{lemma}[theorem]{Lemma}
\newtheorem{proposition}[theorem]{Proposition}

\newtheorem{remark}[theorem]{Remark}
\newtheorem{claim}[theorem]{Claim}

\theoremstyle{definition}
\newtheorem{definition}[theorem]{Definition}

\numberwithin{equation}{section}

\newcommand{\ls}{\leqslant}
\newcommand{\gr}{\geqslant}
\newcommand{\R}{\mathbb{R}}

\DeclareMathOperator{\vol}{vol}

\usepackage{setspace}
\begin{document}
\small

\title{\bf Banach--Mazur distances and basis constants \\ of isotropic log-concave random spaces}
\author{Apostolos Giannopoulos and Antonios Hmadi}
\date{}
\maketitle

\begin{abstract}
\footnotesize
We study the Banach--Mazur distance between random normed spaces generated by centrally symmetric random polytopes associated with isotropic log-concave measures in $\mathbb{R}^n$. 
We show that, in a wide range of parameters, if $x_1,\dots,x_m$ and $y_1,\dots,y_m$ are independent samples from an isotropic log-concave probability measure on $\mathbb{R}^n$, then the corresponding normed spaces $X_{B_m}$ and $Y_{A_m}$ generated by their absolute convex hulls satisfy, with high probability,
$$d_{{\rm BM}}(X_{B_m},Y_{A_m}) \gr \frac{cn}{\ln(1+m/n)},$$
which is sharp in both $n$ and $m$ and recovers the extremal order $n$ when $m \approx n$.

Our results extend Gluskin's theorem from the Gaussian setting to general isotropic log-concave measures, providing evidence for a universality phenomenon in the extremal geometry of the Banach--Mazur compactum. 
In addition, we investigate operator-theoretic properties of the associated random spaces and, as consequences, we derive sharp estimates for their basis constant and show that these random spaces are far from the class of spaces with a $1$-unconditional basis.
The proofs combine probabilistic and geometric methods with recent advances related to Bourgain's slicing problem.
\end{abstract}

\section{Introduction}\label{section:1}

The Banach--Mazur compactum plays a central role in the local theory of Banach spaces and understanding its metric structure is a fundamental problem in asymptotic geometric analysis. 
A landmark result in this direction is Gluskin's theorem~\cite{Gluskin-1981}, which shows that the diameter of the $n$-dimensional Banach--Mazur compactum is of order $n$. 
More precisely, there exist $n$-dimensional normed spaces $X_n$ and $Y_n$ such that
$$d_{{\rm BM}}(X_n,Y_n) \gr cn$$
where $d_{{\rm BM}}(\cdot,\cdot)$ denotes the Banach--Mazur distance and $c>0$ is an absolute constant. 
Gluskin's proof introduced a model of random normed spaces generated by taking the absolute convex hull of the canonical basis together with a collection of independent random vectors distributed uniformly on the Euclidean sphere (or, equivalently, Gaussian vectors). 
This construction has since become a fundamental tool in the field.

A natural question is whether this extremal behavior is specific to Gaussian-type randomness or whether it reflects a more general geometric principle. 
In particular, one may ask whether similar extremal Banach--Mazur distances arise for random polytopes generated by isotropic log-concave measures, which form a broad and fundamental class in asymptotic convex geometry.

\medskip 

\noindent\textbf{Informal statement.} 
Our main goal is to show that the extremal phenomenon discovered by Gluskin persists in this general setting: two independent random normed spaces generated from samples of an arbitrary isotropic log-concave distribution are, with high probability, almost maximally far apart in the Banach--Mazur metric, up to the optimal logarithmic correction. 
This provides evidence for a universality principle governing extremal distances in the Banach--Mazur compactum.

A key new ingredient in our approach is the recent affirmative resolution of Bourgain's slicing problem, which yields uniform control of isotropic constants and allows us to extend probabilistic and geometric arguments beyond the Gaussian framework.

\medskip

\noindent\textbf{Main results.} 
Let $x_1,\dots,x_m$ and $y_1,\dots,y_m$ be independent random vectors with isotropic log-concave distribution $\mu$ on $\mathbb{R}^n$. 
We consider two models of centrally symmetric random polytopes:

\smallskip 

\noindent \emph{Basis-enriched model:} 
Let $2n\ls m\ls\exp(\sqrt{n})$. 
We define
$$B_m := {\rm absconv}\left\{e_1,\ldots,e_n, \frac{x_1}{\sqrt{n}},\ldots,\frac{x_m}{\sqrt{n}}\right\},\quad A_m := {\rm absconv}\left\{e_1,\ldots,e_n, \frac{y_1}{\sqrt{n}},\ldots,\frac{y_m}{\sqrt{n}}\right\}$$
where $\{e_1,\ldots ,e_n\}$ is the standard orthonormal basis of $\mathbb{R}^n$ and denote by $X_{B_m}$ and $Y_{A_m}$ the corresponding normed spaces. 

\smallskip 

\noindent \emph{Pure random polytope model:} 
Let $Cn\ls m\ls\exp(\sqrt{n})$ where $C>0$ is an absolute constant. 
We define
$$B_m := {\rm absconv}\left\{x_1,\ldots,x_m\right\},\quad A_m := {\rm absconv}\left\{y_1,\ldots,y_m\right\}$$
and denote by $X_{B_m}$ and $Y_{A_m}$ the corresponding normed spaces. 

\smallskip 

For both models, in Section~\ref{section:4} we prove the following theorem.

\begin{theorem}\label{th:1.1}
Let $\mu$ be an isotropic log-concave probability measure on $\mathbb{R}^n$. 
If $B_m$ and $A_m$ denote the random polytopes of either the basis-enriched or the pure random polytope model, then for every $m$ in the corresponding range, with probability tending to $1$ as $n \to \infty$, uniformly in $m$, we have
$$d_{{\rm BM}}(X_{B_m}, Y_{A_m}) \gr \frac{cn}{\ln(1+m/n)},$$
where $c>0$ is an absolute constant.
\end{theorem}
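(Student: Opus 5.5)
We follow Gluskin's scheme, replacing the Gaussian estimates by log-concave ones. These come from Paouris' deviation inequality, Borell's lemma, small-ball estimates, and the uniform bound $L_\mu\ls C$ from the resolution of the slicing problem. Write $d(X,Y)\ls \|T\|_{A_m\to B_m}\|T^{-1}\|_{B_m\to A_m}$ for $T\in GL_n$. It suffices to show that, with high probability, every $T$ with $\|T:X_{A_m}\to X_{B_m}\|\ls 1$ (i.e. $T(A_m)\subseteq B_m$) satisfies $\|T^{-1}:X_{B_m}\to X_{A_m}\|\gr cn/\ln(1+m/n)$. After normalizing $|\det T|$ (or $\|T\|_{HS}$), this reduces to showing that for every such $T$ one of the vectors $Tx_j$ (or $Te_i$) falls well outside a small dilate of $A_m$.

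\textbf{Step 1 (geometry of a single random polytope).} We first establish two-sided estimates for $B_m$ holding with probability $1-o(1)$, uniformly in $n\ls m\ls e^{\sqrt n}$.

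\emph{Inclusion.} In the basis-enriched model, $c\sqrt{\ln(m/n)/n}\,B_2^n\cap\ldots\subseteq B_m$. In the pure model, $B_m\supseteq c\sqrt{\ln(m/n)}B_2^n$. This is the standard Giannopoulos--Hartzoulaki/Dafnis--Giannopoulos--Tsolomitis estimate $B_m\supseteq c\,Z_{\ln(m/n)}(\mu)$ combined with $Z_q(\mu)\supseteq c\sqrt q\,B_2^n$ for $q\ls n$.

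\emph{Volume.} $\vol(B_m)^{1/n}\ls C\sqrt{\ln(m/n)}/\sqrt n$ up to the normalization. This follows from $B_m\subseteq C Z_{\ln m}(\mu)$, or from a direct union bound on facets, together with $\vol(Z_q)^{1/n}\ls C\sqrt{q/n}$. The last bound is exactly where $L_\mu\ls C$ is used.

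\textbf{Step 2 (fixed operator).} Fix $T$ with $T(B_m)\subseteq \lambda A_m$ in the appropriate normalization. The key probabilistic estimate is that, for independent log-concave $y$, $\mathbb P(Ty\in \lambda B_m)\ls (C\lambda\sqrt{\ln(m/n)}/\sqrt n\cdot \|T\|\ldots)^{n}$. This comes from the small-ball/volume bound: the density of $Ty$ is bounded by $|\det T|^{-1}\|f_\mu\|_\infty\ls |\det T|^{-1}C^n$, using $\|f_\mu\|_\infty^{1/n}\approx L_\mu\ls C$. Combined with the volume estimate of Step 1, this gives probability $\ls(C\lambda\ln(1+m/n)/n)^{n}$ times a volume ratio. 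Gluskin's determinant trick, which replaces $T$ by a suitable restriction to a large coordinate-free subspace with controlled $s$-numbers, handles non-unimodular $T$. We apply it to roughly $n$ of the $y_j$ simultaneously, using independence, to get probability $e^{-cn^2}$.

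\textbf{Step 3 (net and union bound).} We discretize the set of admissible $T$ by an $\varepsilon$-net of cardinality $e^{Cn^2\ln(1/\varepsilon)}$ in the operator norm $X_{B_m}\to X_{A_m}$. Conditionally on $B_m$, the bound of Step 2 gives $e^{-cn^2}$ failure for each net point, with $\lambda=cn/\ln(1+m/n)$. We must also choose the $n$ vectors among $m$; this costs $\binom mn\ls e^{n\ln(em/n)}$, which is negligible against $e^{-cn^2}$ since $m\ls e^{\sqrt n}$. This also explains the upper limit on $m$.

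\textbf{Main obstacle.} The hardest part is the interplay in Steps 2--3, and specifically obtaining an $e^{-cn^2}$ bound for arbitrary $T$ rather than only near-isometries. In the Gaussian case rotation invariance lets one diagonalize $T$; here $\mu$ has no symmetry. We would handle this by passing to $\ell$-dimensional sections and projections where $T$ has comparable singular values, using Gluskin's argument on $s$-numbers. We would then apply the log-concave marginal density bound on those subspaces, where $\|f_{P_F\mu}\|_\infty^{1/k}\ls C$ again uses the slicing bound.
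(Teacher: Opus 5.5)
There is a genuine gap, and it sits in Step 3, which is the heart of the proof. You get $e^{-cn^2}$ from about $n$ of the $y_j$ and compare it with a net of size $e^{Cn^2\ln(1/\varepsilon)}$. Both are $\exp(\Theta(n^2))$, and in a Gluskin-type argument they cancel. The mesh must be comparable to the target scale, $\varepsilon\approx\gamma$. So in the basis-enriched model the net has size $(C\sqrt{\ln(1+m/n)}/\gamma)^{n^2}$, while $n$ vectors give only $(C\gamma\sqrt{\ln(1+m/n)})^{n^2}$. The product is $(C'\ln(1+m/n))^{n^2}$, which is not small.

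The paper wins because the bad event forces \emph{all} $m$ vectors into the small set. This gives $(C\gamma\sqrt{\ln(1+m/n)})^{mn}$ in the basis-enriched model. In the pure model it gives $e^{-k(m-s)}$ with $k=\lfloor n/2\rfloor$, against a net of size $(C(b/a)^2\sqrt{\ln(1+m/n)})^{n^2}$ that does not depend on $\gamma$. The exponent $mn$ beats $n^2$ precisely because $m\gr 2n$ (resp.\ $m\gr c_0n$). Your proposal never uses this lower bound on $m$, which is a symptom of the problem.

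A second missing idea is decoupling. The set of admissible $T$ (those with $T(A_m)\subseteq B_m$) depends on both samples, so you cannot take a net of it and then apply a fixed-operator bound to the $y_j$. The paper builds the net only from data independent of the vectors used in the probability estimate:
\begin{itemize}
\item in the basis-enriched model, from $e_i\in A_m$, which forces $T\in SL_n\cap\sqrt n\,V^n_{B_m}$, a set depending only on $B_m$;
\item in the pure model, from the first $s=\lceil c_1n\rceil$ vectors, whose absolute convex hull contains $aB_2^n$. Carath\'eodory plus pigeonhole then yields $\sigma_1$ with $\vol_n({\rm absconv}\{x_j\}_{j\in\sigma_1})\gr (a/C_6)^n\vol_n(B_2^n)$, which feeds Proposition~\ref{prop:entropy-general}. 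The remaining $m-s$ vectors supply the probability.
\end{itemize}
No union over $\binom mn$ subsets is needed.

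Several specific claims in Step 1 are also false for general isotropic log-concave $\mu$.
\begin{itemize}
\item $Z_q(\mu)\supseteq c\sqrt q\,B_2^n$ fails: for the uniform measure on $[-\sqrt3,\sqrt3]^n$ one has $h_{Z_q(\mu)}(e_1)\ls\sqrt3$. So $B_m\supseteq c\sqrt{\ln(m/n)}\,B_2^n$ is unavailable, and the paper only uses $Z_q(\mu)\supseteq Z_2(\mu)=B_2^n$, i.e.\ a constant inradius $a$.
\item $B_m\subseteq CZ_{\ln m}(\mu)$ fails already for the Gaussian, since $|x_j|\approx\sqrt n$ while $Z_{\ln m}$ has radius of order $\sqrt{\ln m}$. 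The volume bound the paper uses is the deterministic B\'ar\'any--F\"uredi estimate (Lemma~\ref{lem:barany-furedi}) applied to $B_m\subseteq b\sqrt n\,B_2^n$. It does not use slicing at all.
\item The restriction $m\ls e^{\sqrt n}$ comes from Paouris' inequality plus a union bound over the $m$ vectors to get $|x_j|\ls b\sqrt n$, not from $\binom mn$.
\end{itemize}

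Finally, the obstacle you single out is not where the difficulty lies. For $T\in SL_n$, the bound $\mu(T^{-1}K)\ls C^n\vol_n(K)$ is immediate from $\|f_\mu\|_\infty\ls C^n$. In the pure model the paper normalizes by $s_k(T)$ and projects onto the span $F$ of the top $k$ right singular vectors. It then uses $|\det(T|_F)|\gr1$ and the bounded density of the isotropic marginal $\pi_F(\mu)$ (Proposition~\ref{prop:fixed-operator}). Since $k\ls n-k+1$, both $\lambda T$ and $(\lambda T)^{-1}$ have $s_k\gr1$, and this yields the two factors $\gamma\sqrt n/2$ of the distance. No decomposition into pieces with comparable singular values is needed.
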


The estimate is sharp in its dependence on both $n$ and $m$. 
The logarithmic term is unavoidable for $m \gg n$, reflecting the geometric fact that random polytopes become increasingly well-rounded and approximate the Euclidean ball. 
In the extremal regime $m \approx n$, we recover Gluskin's optimal lower bound of order $n$.

\smallskip

Gluskin's theorem was the starting point for a systematic study of random spaces.
An important feature of the class $X_{n,m}$ of random spaces with unit ball
$$B_m={\rm absconv}\{e_1,\ldots ,e_n,x_1,\ldots ,x_m\}$$
where $x_1,\ldots ,x_m$ are uniformly distributed on the unit sphere $S^{n-1}$, is that a random space $X\in X_{n,m}$ has a rather poor family of well-bounded operators. 
It was observed by Gluskin in \cite{Gluskin-1981b} that a random $X\in X_{n,n^2}$ has the following property: any projection $P$ in $X$ of rank $k\ls n/2$ satisfies
$$\| P:X\to X\|\gr ck/\sqrt{n\ln n}.$$
As a consequence, such a space has basis constant ${\rm bc}(X)\gr c\sqrt{n/\ln n}$ (see Section~\ref{section:2} for background information).

Szarek \cite{Szarek-1983} modified the random structure of $X_{n,m}$ and was able to construct an $n$-dimensional normed space $X$ with ${\rm bc}(X)\gr c\sqrt{n}$. 
John's theorem shows that this order is optimal. 
Mankiewicz \cite{Mankiewicz-1984} applied the random spaces method to construct finite-dimensional spaces with the worst possible symmetric constant (in order). 
In this work, Mankiewicz used the ``space mixing" property of the irreducible group of operators. 
Szarek explicitly introduced the notion of the class ${\rm Mix}(k,\beta )$ of mixing operators, which is the set of all linear operators $T$ satisfying
$${\rm dist}(T(x),E)=| P_{E^{\perp }}T(x)| \gr \beta |x|$$
for some $k$-dimensional subspace $E$ and every $x\in E$. 
Then Szarek showed in \cite{Szarek-1986a} that the mixing property is sufficient for proving the results of Mankiewicz \cite{Mankiewicz-1984}, but also those of Gluskin \cite{Gluskin-1981b} and Szarek \cite{Szarek-1983}.
In particular, he proved that for a random space $X\in X_{n,n^2}$ one has
$$\| T:X\to X\|\gr c\beta k/\sqrt{n\ln n}$$
for any $T\in {\rm Mix}(k,\beta )$.

In Section~\ref{section:6}, using this technique we show that if $m\gr Cn^2(\ln n)$ then, for every $n/4\ls k\ls n/2$ and every $T\in {\rm Mix}_n(k,1)$, a log-concave random space $X_{B_m}$ of the pure random polytope model satisfies
$$\|T:X_{B_m}\to X_{B_m}\|\gr \frac{c_1\sqrt{n}}{\sqrt{\ln(1+m/n)}}$$
with probability greater than $1-\exp(-c_2\sqrt{n})$. 
It is not difficult to show that any projection $P$ of rank  $k\ls n/2$ is $(k,1/2)$-mixing. 
As a consequence, we obtain the following theorem.

\begin{theorem}\label{th:1.2}
Let $\mu$ be an isotropic log-concave probability measure on $\mathbb{R}^n$. 
For any $m\gr Cn^2(\ln n)$, a random polytope $B_m$ of the pure random polytope model satisfies, with high probability,
$$\|P:X_{B_m}\to X_{B_m}\|\gr\frac{c\sqrt{n}}{\sqrt{\ln(1+m/n)}}$$
for every projection $P:\mathbb{R}^n\to\mathbb{R}^n$ with rank $n/4\ls {\rm rank}(P)\ls 3n/4$. 
In particular, for a random $B_m$ we have ${\rm bc}(X_{B_m})\gr c\sqrt{n}/\sqrt{\ln(1+m/n)}$.
\end{theorem}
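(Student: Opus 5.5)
We derive Theorem~\ref{th:1.2} from the mixing-operator estimate announced above (to be proved in Section~\ref{section:6}): for $m\gr Cn^2\ln n$, every $n/4\ls k\ls n/2$ and every $T\in{\rm Mix}_n(k,\beta)$, with probability at least $1-\exp(-c_2\sqrt n)$,
$$\|T:X_{B_m}\to X_{B_m}\|\gr \frac{c_1\beta\sqrt{n}}{\sqrt{\ln(1+m/n)}}.$$
The announcement is for $\beta=1$; the form with a general $\beta$ (in particular $\beta=1/2$) follows by the same argument, since the bound is linear in $\beta$ as in Szarek's estimate. The plan has three steps: (i) show that every projection in the stated rank range is mixing with absolute parameters; (ii) make sure the high-probability event is uniform over all such $T$, not a union over uncountably many operators; (iii) deduce the basis constant bound.

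\emph{Step (i).} Let $P$ be a projection of rank $r$ with $n/4\ls r\ls 3n/4$. If $r\gr n/2$, replace $P$ by $I-P$, which has rank $n-r\in[n/4,n/2]$. This is harmless because
$$\|I-P\|\ls 1+\|P\|, \qquad\text{so}\qquad \|P\|\gr \tfrac12\|I-P\|$$
once $\|I-P\|\gr 2$, and the target lower bound is large. So assume $k:={\rm rank}(P)\in[n/4,n/2]$, with image $F$ and kernel $\ker P$.

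Take $E$ to be a $k$-dimensional subspace of $F^\perp$; this is possible since $\dim F^\perp=n-k\gr k$. For $x\in E$, write $x=Px+(x-Px)$ with $x-Px\in\ker P$. Because $x\perp F$ and $Px\in F$,
$$|x|^2=\langle x,x-Px\rangle\ls |x|\,|x-Px|.$$
This control alone is not enough, since $Px$ could be small, for instance when $P$ is orthogonal. So I would instead use the standard fact that any projection of rank $k\ls n/2$ is $(k,1/2)$-mixing, as the introduction asserts.

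The concrete choice is to use $E=F$ when $P$ is close to orthogonal, and otherwise to pick $E$ via the singular value decomposition of the off-diagonal block of $P$ relative to $F\oplus F^\perp$. Write
$$P=\begin{pmatrix} I & A\\ 0 & 0\end{pmatrix}$$
in the decomposition $F\oplus F^\perp$, and take
$$E=\{(u,\,\lambda Bu)\}$$
for a suitable partial isometry $B$ and scalar $\lambda$. Then
$$P_{E^\perp}P(u,\lambda Bu)=P_{E^\perp}\bigl((I+\lambda AB)u,0\bigr),$$
and for an appropriate $\lambda$ its norm is $\gr\frac12|(u,\lambda Bu)|$. Carrying out this computation is the main obstacle. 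I would first try $\lambda=1$ with $B$ an isometry from $F$ into $F^\perp$ chosen so that $AB\gr 0$ on $F$, using polar decomposition; this is possible because $\dim F^\perp\gr\dim F$. Then $(I+AB)u$ has norm at least $|u|$. Projecting onto $E^\perp$, which contains vectors of the form $(u,-Bu)$, retains the fraction $1/\sqrt2$ of the norm relative to $|(u,Bu)|=\sqrt2|u|$, giving $\beta=1/2$.

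\emph{Step (ii).} In Szarek's scheme the estimate holds simultaneously for all $T\in{\rm Mix}(k,\beta)$ on a single event. The event depends only on the random polytope: we need that $B_m$ contains no large section nicely aligned with a Euclidean structure, and this is proved via a net over the pairs (subspace $E$, image directions) together with the Lipschitz dependence of $\|T\|$ on $T$. I will assume that the Section~\ref{section:6} result is stated in this uniform form, and take a union over the polynomially many values of $k$ at cost $n\exp(-c_2\sqrt n)$.

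\emph{Step (iii).} If $\{v_i\}$ is a basis of $X_{B_m}$ with basis constant $K$, then the natural projection $P_{n/2}$ onto ${\rm span}\{v_1,\dots,v_{\lfloor n/2\rfloor}\}$ has norm $\ls K$ and rank $\lfloor n/2\rfloor\in[n/4,3n/4]$. Applying the bound from Steps (i)--(ii) to this projection gives
$${\rm bc}(X_{B_m})\gr \frac{c\sqrt n}{\sqrt{\ln(1+m/n)}}.$$
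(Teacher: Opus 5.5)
Your proposal is correct and follows essentially the paper's route. Projections of rank in $[n/4,3n/4]$ are mixing with absolute parameters: the paper applies the mixing theorem to $2P\in{\rm Mix}_n(n/4,1)$, which is your $\beta=1/2$ version by the trivial homogeneity $T\in{\rm Mix}_n(k,\beta)\iff\beta^{-1}T\in{\rm Mix}_n(k,1)$. The proof of that theorem produces a single event on which the bound holds for all $T\in{\rm Mix}_n(k,1)$, and the basis-constant bound is read off from the natural projection of rank $\lfloor n/2\rfloor$.

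Your graph construction $E=\{(u,Bu)\}$, with $B$ an isometry making $AB\ge 0$, correctly proves the $(k,1/2)$-mixing fact that the paper only cites. Drop the aside about taking $E=F$ for nearly orthogonal $P$: it never works, since $P$ fixes $F$ and hence $P_{F^\perp}Px=0$ for $x\in F$.
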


In Section~\ref{section:5} we show that log-concave random spaces $X_{B_m}$ have extremal Banach--Mazur distance from the class $\mathcal{U}_n$ of $n$-dimensional normed spaces with a $1$-unconditional basis when $m\approx n$. 
The precise statement is the following.

\begin{theorem}\label{th:1.3}
Let $\mu$ be an isotropic log-concave probability measure on $\mathbb{R}^n$. 
If $C\ln (\delta^{-1} )n\ls m\ls\exp(\sqrt{n})$, where $C>0$ is an absolute constant, then a random space $X_{B_m}$ of the pure random polytope model 
satisfies, with probability greater than $1-\delta$,
$$d_{{\rm BM}}(X_{B_m},Y)\gr \frac{c\sqrt{n}}{\sqrt{\ln \left (1+m/n\right )}}$$
for every $n$-dimensional normed space $Y$ with a $1$-unconditional basis.
\end{theorem}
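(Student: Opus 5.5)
The plan is to follow the classical argument for distance from unconditional spaces, in the version of Szarek and Tomczak-Jaegermann: an operator that almost commutes with many sign changes has to be bounded below on a large block. The key tool is the mixing estimate from Section~\ref{section:6}, in the form used for Theorem~\ref{th:1.2}. Let $Y$ have a $1$-unconditional basis $(u_i)$ and let $T:X_{B_m}\to Y$ be an isomorphism with $\|T\|\,\|T^{-1}\|=d$. For each sign vector $\varepsilon\in\{-1,1\}^n$ let $D_\varepsilon$ be the corresponding diagonal isometry of $Y$, and set $S_\varepsilon=T^{-1}D_\varepsilon T$. Then $\|S_\varepsilon:X_{B_m}\to X_{B_m}\|\ls d$ for every $\varepsilon$, and averaging gives projections $P_\sigma=T^{-1}R_\sigma T$, where $R_\sigma$ is the coordinate projection onto ${\rm span}\{u_i:i\in\sigma\}$, with $\|P_\sigma\|\ls d$.

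The goal is to find some $\sigma$ with $n/4\ls|\sigma|\ls 3n/4$ for which $P_\sigma$, or an operator built from it, lies in ${\rm Mix}(k,\beta)$ with $k\approx n/4$ and $\beta$ an absolute constant. If we take $|\sigma|=\lfloor n/2\rfloor$, then $P_\sigma$ is a projection of rank in $[n/4,3n/4]$, so Theorem~\ref{th:1.2} would give $d\gr\|P_\sigma\|\gr c\sqrt{n}/\sqrt{\ln(1+m/n)}$ directly. That theorem, however, requires $m\gr Cn^2\ln n$, whereas here $m$ may be as small as $C\ln(\delta^{-1})n$. So we cannot simply quote it.

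We therefore redo the underlying estimate using only the structure of $B_m$ that holds for $m\gr C\ln(\delta^{-1})n$. By the results on random polytopes proved earlier (with the slicing bound), with probability $1-\delta$ we have both
$$B_m\supseteq c\sqrt{\ln(m/n)}\,B_2^n \qquad\text{and}\qquad B_m\subseteq C\sqrt{n}\,B_2^n .$$
Since $\{P_\sigma\}$ is a commuting family of projections summing to partitions of the identity, Khintchine- and Kahane-type averaging over $\varepsilon$ gives the Euclidean comparison
$$\mathbb{E}_\varepsilon\Bigl|\sum_i \varepsilon_i Q_i x\Bigr|^2=\sum_i|Q_ix|^2 ,$$
where $Q_i=T^{-1}R_{\{i\}}T$. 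From this we expect to find $\sigma$ such that $P_\sigma$ is $(n/4,1/2)$-mixing in the sense defined in the introduction.

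The key step is then a lower bound of the form $\|S:X_{B_m}\to X_{B_m}\|\gr c\beta\sqrt{n}/\sqrt{\ln(1+m/n)}$ for a fixed mixing operator $S$, holding with probability $1-e^{-cn}$, followed by a union bound over a net. The lower bound should come from testing $S$ on the vertices $x_j$: for a $(k,\beta)$-mixing $S$ with subspace $E$, at least one of the random points $x_j$ has $|P_{E^\perp}Sx_j|$ large in a direction poorly covered by $B_m$. This uses the $\psi_1$ behaviour of marginals and small-ball estimates for log-concave vectors.

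The main obstacle is the union bound. Theorem~\ref{th:1.2} needs $m\gr n^2\ln n$ precisely because it handles all operators at once. Here we only need the operators of the special form $T^{-1}R_\sigma T$, and even that family is too large. I would instead follow Gluskin's counting argument: discretize $T$ through its action on $e$-independent data, namely the images of $x_1,\dots,x_{n'}$ with $n'\approx n$. The net over such data has cardinality $e^{Cn^2}$. The remaining $m-n'$ vertices are independent of this data, and each makes the desired event fail with probability at most $e^{-cn}$. Balancing these two counts under $m\gr C\ln(\delta^{-1})n$ is the delicate point. If it fails, I would fall back to the weaker range and deduce the statement from Theorem~\ref{th:1.2}.
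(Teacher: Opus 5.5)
There is a genuine gap. Your argument never reaches the range $C\ln(\delta^{-1})n\ls m<Cn^2\ln n$, which contains the main case $m\approx n$, and your fallback to Theorem~\ref{th:1.2} covers only $m\gr Cn^2\ln n$.

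The trouble is the reduction itself. Since $T$ is an arbitrary isomorphism, $P_\sigma=T^{-1}R_\sigma T$ ranges over \emph{all} projections of rank $\lfloor n/2\rfloor$. Using only $\|P_\sigma\|\ls d$ for a single $\sigma$ therefore commits you to proving that every such projection on $X_{B_m}$ has norm at least $c\sqrt n/\sqrt{\ln(1+m/n)}$. That is a basis-constant bound, strictly stronger than what the theorem asks.

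The $n^2\ln n$ threshold for that bound cannot be balanced away by Gluskin's counting. Here the operators act from $X_{B_m}$ to itself, so the target ball contains the very vertices $x_j$, $j>n'$, that you want to use as independent trials. Moreover, the set of operators you would net (those sending $x_1,\dots,x_{n'}$ into $\gamma\sqrt n\,B_m$) is not determined by the first $n'$ vectors. In \S4.2 the counting works only because the target $A_m$ is independent of $B_m$. For self-maps the paper must use the Szarek--Tomczak-Jaegermann decoupling. That leaves only $\ell\approx m/(2n+3)$ effectively independent vertices and a per-operator bound of order $e^{-cm}$, which must beat a net of size $e^{Cn^2\ln n}$; hence $m\gr Cn^2\ln n$.

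The missing idea is that no net over operators is needed. The paper's proof is purely volumetric and uses the full unconditional structure of $B_Y$ through Lozanovskii's theorem.
\begin{itemize}
\item Normalize so that $B_Y\subseteq B_m^\circ\subseteq dB_Y$, where $d=d_{\rm BM}(X_{B_m}^\ast,Y)$.
\item Lozanovskii gives a parallelepiped $Q\subseteq B_Y$ with $\vol_n(B_Y)/\vol_n(Q)\ls n^n/n!$.
\item Ball's observation (Hadamard, AM--GM and the definition of $\pi_1$) gives $\vol_n(Q)^{1/n}\ls 2\pi_1(I_n:X_{B_m}^\ast\to\ell_2^n)/n$ for every parallelepiped $Q\subseteq B_m^\circ$.
\item The bound $\pi_1(I_n:X_{B_m}^\ast\to\ell_2^n)\ls c_1^{-1}$ follows at once from the lower empirical $L_1$ estimate $\frac1m\sum_j|\langle y,x_j\rangle|\gr c_1|y|$ of Lemma~\ref{lem:concentration}. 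This is exactly where the hypothesis $m\gr C\ln(\delta^{-1})n$ and the probability $1-\delta$ enter.
\item Together these give $\vol_n(B_m^\circ)^{1/n}\ls Cd/n$. On the other hand, the upper bound \eqref{eq:Bm-volume-upper} for $\vol_n(B_m)$ and the Bourgain--Milman inequality give $\vol_n(B_m^\circ)^{1/n}\gr c/\sqrt{n\ln(1+m/n)}$.
\item Comparing yields $d\gr c\sqrt n/\sqrt{\ln(1+m/n)}$, and duality finishes.
\end{itemize}

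Two smaller points. First, your inclusion $B_m\supseteq c\sqrt{\ln(m/n)}\,B_2^n$ is false in general for large $m/n$: for the uniform measure on the isotropic cube, $B_m\subseteq\sqrt3\,B_\infty^n$. One only has $B_m\supseteq cZ_{\ln(1+m/n)}(\mu)\supseteq cB_2^n$. Second, the Khintchine averaging is unnecessary, since any projection of rank $k\ls n/2$ is already $(k,1/2)$-mixing.
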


\medskip

\noindent\textbf{Comparison with previous work.} 
Gluskin's original construction \cite{Gluskin-1981} established the extremal order $n$ of the Banach--Mazur distance between random Gaussian polytopes, providing a sharp estimate for the diameter of the Banach--Mazur compactum. 
Subsequent work focused primarily on refinements within the Gaussian or highly symmetric settings. 
The first named author and Hartzoulaki initiated this line of research in \cite{Giannopoulos-Hartzoulaki-2002}; they considered the case where the $x_j$'s are independent random vertices of the cube, determined the asymptotic shape of a random $B_m$ and used this information to deduce various properties of the corresponding random space $X_{B_m}$ (see also the work of Mendelson, Pajor and Rudelson \cite{Mendelson-Pajor-Rudelson-2005} where it is proved that the combinatorial dimension, entropy and Gelfand numbers of these polytopes are extremal at every scale of their arguments).

These results were extended by Litvak, Pajor, Rudelson and Tomczak-Jaegermann in \cite{Litvak-Pajor-Rudelson-Tomczak-2005} to random spaces $X_{B_m}$ generated by a random vector $X=(\xi_1,\ldots,\xi_n)$ whose coordinates are independent copies of a random variable $\xi$ with expectation $\mathbb{E}(\xi)=0$ and variance ${\rm Var}(\xi)=1$, satisfying $\left(\mathbb{E}|\xi|^p\right)^{1/p}\ls L\sqrt{p}$ for some constant $L>0$ and every $p\gr 1$ (we then say that $\xi$ is $L$-subgaussian). 
Subsequently, Lata{\l}a, Mankiewicz, Oleszkiewicz and Tomczak-Jaegermann showed in \cite{LMOTJ-2007} that if $m\approx n$, then for a random pair of such spaces 
the Banach--Mazur distance is of order $n$ and they obtained a lower bound for the basis constant for $m\gr n^2(\ln n)$.

\smallskip 

Our results extend this phenomenon to the much broader class of isotropic log-concave measures. 
We exploit several of the ideas that appear in \cite{LMOTJ-2007}. 
However, unlike the Gaussian or subgaussian settings, general log-concave measures lack rotational invariance and do not admit a product structure. 
As a result, many classical tools used in the analysis of random polytopes are no longer directly applicable. 

To overcome these difficulties, we combine techniques from geometric functional analysis with probabilistic tools for log-concave measures, together with volumetric and operator-theoretic arguments. 
A crucial role is played by recent advances related to Bourgain's slicing problem, which provide uniform control of isotropic constants and enable quantitative volumetric estimates.

\medskip 

\noindent \textbf{Organization of the paper.} 
Section~\ref{section:2} contains background information and auxiliary entropy estimates. 
In Section~\ref{section:3} we collect those tools from the theory of isotropic log-concave probability measures that play a key role in our study.   
Section~\ref{section:4} presents the proof of the main Banach--Mazur distance estimate for both random models that we consider. 
Section~\ref{section:5} is devoted to the estimate of the distance from the class of spaces with a $1$-unconditional basis. 
Finally, Section~\ref{section:6} develops the operator-theoretic framework of mixing operators and derives bounds for norms of projections and basis constants of isotropic log-concave random spaces. 

\section{Background information and auxiliary results}\label{section:2}

\noindent \textbf{\S~2.1. Notation and definitions.} 
We work in $\mathbb{R}^n$, equipped with the standard inner product $\langle \cdot, \cdot \rangle$. 
The associated Euclidean norm is denoted by $|\cdot|$, the Euclidean unit ball by $B_2^n$ and the Euclidean unit sphere by $S^{n-1}$.
We also fix an orthonormal basis $\{e_1,\ldots ,e_n\}$. Lebesgue measure in $\mathbb{R}^n$ is denoted by $\vol_n$ and we write 
$\omega_n = \vol_n(B_2^n)$ for the volume of the Euclidean unit ball.
We write $M_n(\mathbb{R})$ for the set of linear operators $T:\mathbb{R}^n\to\mathbb{R}^n$. If $T\in M_n(\mathbb{R})$, then we write $T\in GL_n$ if $T$ is invertible and $T\in SL_n$ if $T$ is volume preserving. 
We also write 
$$\|x\|_p=\left(\sum_{i=1}^n|x_i|^p\right)^{1/p}$$
for the $\ell_p^n$-norm of $x=(x_1,\ldots,x_n)\in {\mathbb R}^n$, $1\ls p<\infty$ (in the case $p=\infty$, $\|x\|_{\infty}=\max_{i\ls n}|x_i|$).
We use the notation $|N|$ for the cardinality of a finite set $N$. 
For any $m\gr 1$, we denote $[m]:=\{1,\ldots,m\}$.

The letters $c,c_1,c_2,c^{\prime}$, etc., are reserved for absolute positive constants, which may change from line to line. 
Wherever we write $a\approx b$, this means that there exist absolute constants $c_1,c_2>0$ such that $c_1a\ls b\ls c_2a$. 

A convex body in $\mathbb{R}^n$ is a compact convex set $K$ with nonempty interior.  
It is called centrally symmetric if $K = -K$. For any finite set $A\subset\mathbb{R}^n$, the convex hull ${\rm conv}(A)$ of $A$ is the set
of all convex combinations of points from $A$. The absolute convex hull of $A$ is the centrally symmetric convex body ${\rm absconv}(A\cup (-A))$.  
The support function of a convex body $K$ is the function $h_K(y)=\max\{\langle x,y\rangle :x\in K\}$ for $y\in\mathbb{R}^n$. 
The mean width of $K$ is defined by
$$w(K)=\int_{S^{n-1}}h_K(\xi)\, d\sigma(\xi),$$
where $\sigma$ denotes the rotationally invariant probability measure on $S^{n-1}$.

All $d$-dimensional normed spaces in this work are of the form $X=({\mathbb R}^d,\|\cdot\|)$. 
The unit ball of $X$ is a centrally symmetric convex body in ${\mathbb R}^d$, denoted by $B_X$. 
Conversely, every centrally symmetric convex body $K$ induces the norm $\|x\|_K=\min\{\lambda\gr 0:x\in \lambda K\}$ on ${\mathbb R}^d$ 
and $K$ is the unit ball of $X_K=({\mathbb R}^d,\|\cdot\|_K)$. 
The dual norm is defined by $\|y\|_{\ast}=\max\{|\langle x,y\rangle|:x\in B_X\}$ and the unit ball of $X^{\ast}=({\mathbb R}^n,\|\cdot\|_{\ast})$ 
is the polar body $B_{X^{\ast}}=B_X^{\circ}$ of $B_X$.

Let $X$ and $Y$ be two $n$-dimensional normed spaces. 
Their Banach--Mazur distance $d_{{\rm BM}}(X,Y)$ is defined by
$$d_{{\rm BM}}(X,Y)=\min\{ \|T:X\to Y\|\;\|T^{-1}:Y\to X\| \mid T:X\to Y \ \text{is a linear isomorphism}\}.$$
John's theorem \cite{John-1948} shows that $d_{{\rm BM}}(X,\ell_2^n)\ls\sqrt{n}$ for every $X$. 
It follows that $d_{{\rm BM}}(X,Y)$ is always bounded by $n$. 
On the other hand, as we already mentioned, Gluskin \cite{Gluskin-1981} proved that there exists an absolute constant $c>0$ such that for every $n$ 
one can find $n$-dimensional normed spaces $X_n,Y_n$ with $d_{{\rm BM}}(X_n,Y_n)\gr cn$.

For any $n$-dimensional normed space $X$ and any basis $\left\{x_{i}\right\}_{i=1}^{n}$ of $X$, the basis constant of $\left\{x_{i}\right\}_{i=1}^{n}$ is the smallest $K>0$ with the property that for every $1\ls k\ls n$ and any choice of scalars $\{\lambda_i\}_{i=1}^n$ one has
$$\left\|\sum_{i=1}^k\lambda_ix_i\right\|\ls K\,\left\|\sum_{i=1}^n\lambda_ix_i\right\|.$$
The basis constant of $X$, denoted by ${\rm bc}(X)$, is defined as the infimum of the basis constants over all bases $\left\{x_{i}\right\}_{i=1}^{n}$ of $X$. 
By John's theorem it is clear that ${\rm bc}(X) \ls \sqrt{\dim X}$. 
This upper estimate is asymptotically sharp as shown by Szarek in \cite{Szarek-1983}. 

Let $n,k\in \mathbb{N}$, with $1\ls k\ls n/2$ and let $\beta>0$. 
Recall from the introduction that an operator $T \in M_n(\mathbb{R})$ is called $(k,\beta)$-mixing if there exists a subspace $E$ of $\mathbb{R}^{n}$, with ${\rm dim}(E)\gr k$, such that
$${\rm dist}(T(x), E)=|P_{E^{\perp}} T(x)|\gr\beta |x| \quad \text { for all } x \in E.$$
The set of all $(k, \beta)$-mixing operators on $\mathbb{R}^{n}$ is denoted by ${\rm Mix}_{n}(k,\beta)$.
The connection between mixing operators and the basis constant is given by the observation that any projection $P \in M_n(\mathbb{R})$ of rank $k\ls n/2$ is $(k,1/2)$-mixing (see e.g.~\cite{Mankiewicz-Tomczak-handbook}). 
On the other hand, by the definition of the basis constant, in any $n$-dimensional normed space $X$ there exists a projection $P$ of rank $k=\lfloor n/2\rfloor $ with $\|P:X\to X\|\ls {\rm bc}(X)$. 
Therefore, 
\begin{equation}\label{eq:low-bc}{\rm bc}(X)\gr c\inf\left\{\|T:X\to X\|:\;T\;\text{is}\;(\lfloor n/2\rfloor, 1/2)-\text{mixing}\right\}.\end{equation}

We refer the reader to the books \cite{AGA-book}, \cite{AGA-book-2}, \cite{Pisier-book} and \cite{Tomczak-book} for basic facts used throughout the text.

\bigskip 

\noindent \textbf{\S 2.2. Operator norms and volume estimates.} 
For any $T\in M_n(\mathbb R)$ we write $\|T\|_{{\rm HS}}$ for the Hilbert--Schmidt norm of $T$; recall that
$$\|T\|_{{\rm HS}}=\left(\sum_{i,j=1}^n|\langle T(e_i),e_j\rangle|^2\right)^{1/2}.$$
Then the space $M_n(\mathbb R)$ equipped with this norm can be identified with the Euclidean space ${\mathbb R}^{n^2}$.
We also define $\|T\|_{{\rm op}}=\|T:\ell_2^n\to\ell_2^n\|$ and
$$V_{{\rm op}}^n=\{T\in M_n(\mathbb R): \|T\|_{{\rm op}}\ls 1\}.$$
For any centrally symmetric convex body $B$ in $\mathbb{R}^n$ we define
$$V_B^n=\{T\in M_n(\mathbb R): \|T:\ell_1^n\to X_B\|\ls 1\}.$$
Note that $\|T:\ell_1^n\to X_B\|\ls 1$ if and only if $T(e_i)\in B$ for all $i=1,\ldots,n$. 
Therefore,
\begin{equation}\label{eq:volume-VB}
\vol_{n^2}(V_B^n)=\vol_n(B)^n.
\end{equation}

The next proposition provides a lower bound for the volume of the ``unit ball'' on $M_n(\mathbb R)$ that corresponds to the operator norm.
For a proof, see \cite[Lemma~7]{Mankiewicz-Tomczak-handbook} or \cite[Proposition~6.1.8]{AGA-book-2}.

\begin{proposition}\label{prop:volume-V}
For every $n\gr 1$ and every centrally symmetric convex body $B\subset \mathbb{R}^n$ we have
$$\vol_{n^2}(V_{{\rm op}}^n)\gr \left(\frac{c_2}{\sqrt{n}}\right)^{n^2},$$
where $c_2>0$ is an absolute constant.
\end{proposition}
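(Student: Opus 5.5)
We want $\vol_{n^2}(V_{\rm op}^n)\gr (c_2/\sqrt{n})^{n^2}$. The body $B$ in the statement plays no role, so the claim is purely about the operator-norm unit ball in $M_n(\mathbb R)\cong\mathbb R^{n^2}$. The plan is to compare $V_{\rm op}^n$ with the Hilbert--Schmidt unit ball $B_{\rm HS}=\{T:\|T\|_{\rm HS}\ls 1\}$, a Euclidean ball in $\mathbb R^{n^2}$. The trivial inclusion $B_{\rm HS}\subseteq V_{\rm op}^n$ only gives $\vol(V_{\rm op}^n)\gr\omega_{n^2}\approx (c/n)^{n^2}$, which loses a factor $\sqrt n$ per coordinate. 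We therefore need an argument that sees that a typical matrix of Hilbert--Schmidt norm about $n$ has operator norm only about $\sqrt n$.

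The approach is probabilistic, via Gaussian matrices. Let $G$ be an $n\times n$ matrix with i.i.d.\ $N(0,1)$ entries. It is classical (for instance by Gordon's/Slepian's comparison, or by a simple net argument on $S^{n-1}\times S^{n-1}$) that $\mathbb E\|G\|_{\rm op}\ls 2\sqrt n$. By Markov's inequality, $\mathbb P(\|G\|_{\rm op}\ls 4\sqrt n)\gr 1/2$. On the other hand, the density of $G$ is bounded by $(2\pi)^{-n^2/2}$, so
$$\frac12\ls \mathbb P\bigl(G\in 4\sqrt n\,V_{\rm op}^n\bigr)\ls (2\pi)^{-n^2/2}\,(4\sqrt n)^{n^2}\vol_{n^2}(V_{\rm op}^n).$$
This yields $\vol_{n^2}(V_{\rm op}^n)\gr \frac12\bigl(\sqrt{2\pi}/(4\sqrt n)\bigr)^{n^2}\gr (c_2/\sqrt n)^{n^2}$, after absorbing $1/2$ into the constant (note $2^{-1/n^2}\gr 1/2$).

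The only substantive step is the bound $\mathbb E\|G\|_{\rm op}\ls C\sqrt n$; any absolute constant $C$ suffices. If we prefer to avoid comparison theorems, take a $1/4$-net $N$ of $S^{n-1}$ with $|N|\ls 9^n$. Then $\|G\|_{\rm op}\ls 2\max_{x,y\in N}\langle Gx,y\rangle$, and each $\langle Gx,y\rangle$ is standard Gaussian. A union bound gives $\mathbb P(\|G\|_{\rm op}>C\sqrt n)\ls 81^n e^{-C^2n/8}<1/2$ for large $C$, which is all the argument above needs. The ``main obstacle'' is thus just this standard operator-norm estimate for Gaussian matrices.

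Alternatively, one could use the polar-decomposition integration formula or compare with unitary-group volumes, but the Gaussian density argument is the shortest route.
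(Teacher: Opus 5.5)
Your proof is correct and self-contained, and you are right that $B$ plays no role in the statement. The paper gives no argument of its own here and only cites the literature. A standard textbook proof of this fact uses the mean norm on the sphere instead of the Gaussian density. With $N=n^2$, polar integration together with H\"older's and Jensen's inequalities gives $\bigl(\vol_N(K)/\vol_N(B_2^N)\bigr)^{1/N}=\bigl(\int_{S^{N-1}}\|\theta\|_K^{-N}\,d\sigma(\theta)\bigr)^{1/N}\gr 1/M(K)$, where $M(K)=\int_{S^{N-1}}\|\theta\|_K\,d\sigma(\theta)$. Writing a Gaussian matrix as $G=\|G\|_{\rm HS}\,\theta$, with $\theta$ uniform on $S^{N-1}$ and independent of $\|G\|_{\rm HS}$, gives $M(V_{\rm op}^n)=\mathbb{E}\|G\|_{\rm op}/\mathbb{E}\|G\|_{\rm HS}\ls C\sqrt{n}/n$. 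Then $\vol_N(B_2^N)^{1/N}\approx 1/n$ finishes the proof. Both arguments rest on the same input, $\|G\|_{\rm op}\ls C\sqrt n$ for a Gaussian matrix. Your version replaces the polar-coordinates/Jensen step and the Gaussian-to-spherical comparison by the trivial bound $(2\pi)^{-N/2}$ on the density. This makes it shorter and lets it get by with a much weaker probability estimate. Any bound of the form $\mathbb{P}(\|G\|_{\rm op}\ls C\sqrt n)\gr e^{-cn^2}$ would do, since such a factor disappears after taking $n^2$-th roots; this is why your net-plus-union-bound fallback already suffices. The mean-norm route, in turn, packages the idea as the reusable inequality $\vol_N(K)^{1/N}\gr \vol_N(B_2^N)^{1/N}/M(K)$, valid for any symmetric convex body. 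Your argument gives an equally general median-type counterpart, $\vol_N(K)^{1/N}\gr \sqrt{2\pi}\,\gamma_N(tK)^{1/N}/t$ for every $t>0$, where $\gamma_N$ is the standard Gaussian measure on $\R^N$.
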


\medskip 

\noindent \textbf{\S~2.3. Covering estimates.} 
Let $K$ and $L$ be convex bodies in $\mathbb{R}^{n}$. 
The covering number $N(L,K)$ is the smallest number of translates of $K$ needed to cover $L$:
$$N(L,K)= \min\Bigl\{ N\in\mathbb{N} : \exists\, x_1, \ldots, x_N \in \mathbb{R}^n \text{ such that } L \subseteq \bigcup_{j=1}^{N} (x_{j}+K)\Bigr\}.$$
Assume that $K$ is centrally symmetric and let $D\subseteq L$. 
We say that a finite set $\mathcal{N}\subset D$ is a $t$-net of $D$ with respect to the norm $\|\cdot\|_K$ induced by $K$ if for every $x\in D$ there exists $y\in\mathcal{N}$ such that $\|x-y\|_K\ls t$. 
It is easily checked that, for $0<t\ls 1$, if $\mathcal{N}$ is a $t$-net of $D$ with respect to $\|\cdot\|_K$ then $N(D,K)\ls |\mathcal{N}|$.

\medskip 

We shall use the following standard volumetric lemma.

\begin{lemma}\label{lem:standard-net}
Let $K$ and $L$ be two centrally symmetric convex bodies in $\mathbb R^d$ such that $K\subseteq L$ and let $\mathcal A\subseteq L$. 
Then there exists a $1$-net $\mathcal N\subseteq \mathcal A$ with respect to the norm $\|\cdot\|_K$ such that
$$ |\mathcal N| \ls 3^d\,\frac{\vol_d(L)}{\vol_d(K)}. $$
\end{lemma}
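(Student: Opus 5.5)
I will prove Lemma~\ref{lem:standard-net} with the usual maximal-separated-set argument. The plan is to take a maximal subset of $\mathcal A$ that is $1$-separated in $\|\cdot\|_K$, note that it is automatically a $1$-net of $\mathcal A$, and then bound its cardinality by packing disjoint copies of $\tfrac12 K$ inside a dilate of $L$.

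\emph{Step 1: construction of the net.} I choose a subset $\mathcal N=\{z_1,z_2,\ldots\}\subseteq\mathcal A$ such that $\|z_i-z_j\|_K>1$ for all $i\neq j$, and which is maximal with respect to inclusion among subsets of $\mathcal A$ with this property.

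\emph{Step 2: $\mathcal N$ is finite and is a $1$-net.} Finiteness (and hence the existence of a maximal such set) comes from Step~3. Any $1$-separated family in $\mathcal A$ is finite, with cardinality bounded by the volume ratio obtained there, so among such families one of maximal cardinality exists and is maximal for inclusion. Maximality gives the net property. If some $x\in\mathcal A$ had $\|x-z_i\|_K>1$ for every $i$, then $\mathcal N\cup\{x\}$ would still be $1$-separated, contradicting maximality. Therefore every $x\in\mathcal A$ satisfies $\|x-z_i\|_K\ls 1$ for some $i$.

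\emph{Step 3: the volume count.} I will show that the translates $z_i+\tfrac12 K$ are pairwise disjoint and all lie in $\tfrac32 L$.

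\emph{Disjointness.} Suppose $z_i+\tfrac12 a=z_j+\tfrac12 b$ with $a,b\in K$. Then $z_i-z_j=\tfrac12(b-a)$. By central symmetry and convexity of $K$, the point $\tfrac12(b-a)$ lies in $K$, so $\|z_i-z_j\|_K\ls 1$. This contradicts the separation, so the translates are disjoint.

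\emph{Containment.} Since $z_i\in\mathcal A\subseteq L$ and $K\subseteq L$, we have $z_i+\tfrac12 K\subseteq L+\tfrac12 L=\tfrac32 L$, where the last equality uses the convexity of $L$.

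\emph{Conclusion.} Comparing volumes gives
$$|\mathcal N|\,\Bigl(\tfrac12\Bigr)^d\vol_d(K)\ls\Bigl(\tfrac32\Bigr)^d\vol_d(L).$$
Hence $|\mathcal N|\ls 3^d\,\vol_d(L)/\vol_d(K)$, as claimed.

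There is no real obstacle in this argument. The only points needing care are the use of symmetry and convexity of $K$ for the disjointness, the use of the hypothesis $K\subseteq L$ to get $L+\tfrac12K\subseteq\tfrac32 L$, and the remark that the existence of a maximal separated set follows from the a priori cardinality bound.
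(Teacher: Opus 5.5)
Your proof is correct and follows the paper's argument: a maximal $1$-separated subset of $\mathcal A$ is a $1$-net, and the translates $z_i+\frac12K$ packed into $L+\frac12K\subseteq\frac32L$ give the bound by comparing volumes. The only differences are cosmetic: you use strict separation, so the translates are genuinely disjoint rather than merely having disjoint interiors as in the paper. You also justify the existence of a maximal separated set via the a priori cardinality bound, which the paper leaves implicit.
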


\begin{proof}
Let $\mathcal N=\{x_1,\ldots,x_N\}$ be a maximal subset of $\mathcal A$ with respect to inclusion such that $\|x_i-x_j\|_K\gr 1$ for all $i\ne j.$
Then $\mathcal N$ is a $1$-net of $\mathcal A$. Also, $\bigcup_{i=1}^N\left(x_i+\frac12K\right)\subseteq L+\frac12K\subseteq \frac32L,$ and the sets $x_i+\frac12K$ have pairwise disjoint interiors. 
Indeed, if $\operatorname{int}\left(x_i+\frac12K\right)\cap \operatorname{int}\left(x_j+\frac12K\right)\neq\emptyset,$ then $x_i-x_j\in \operatorname{int}(K)$, hence $\|x_i-x_j\|_K<1$, a contradiction.
Hence $N\,\vol_d\!\left(\frac12K\right)\ls \vol_d\!\left(\frac32L\right),$ which gives $|\mathcal N|=N \ls 3^d\,{\vol_d(L)}/{\vol_d(K)}.$
\end{proof}

From Lemma~\ref{lem:standard-net} and Proposition~\ref{prop:volume-V} we deduce the following entropy estimate, which is a Euclidean-ball specialization of \cite[Proposition~5.3]{MT1}.

\begin{proposition}\label{prop:entropy-general}
Let $x_1,\dots,x_n$ be linearly independent vectors in $\mathbb R^n$ and set $ \delta_0:=\big(\max_{1\ls j\ls n}|x_j|\big)^{-1}.$
Let $B\subset \mathbb R^n$ be a centrally symmetric convex body such that $rB_2^n\subseteq B$ for some $r>0$. 
Define $\mathcal A:=\{R\in M_n(\mathbb R):R(x_i)\in B,\ i=1,\dots,n\}.$
Then for every $0<t\ls r\delta_0$ and every subset $\mathcal A_0\subseteq \mathcal A$, there exists a $t$-net $\mathcal N_t\subseteq \mathcal A_0$ with respect to the Euclidean operator norm such that
$$|\mathcal N_t| \ls \left(\frac{C_0}{t}\right)^{n^2} \left( \frac{\vol_n(B)\,\vol_n(B_1^n)} {\vol_n(B_2^n)\,\vol_n(\operatorname{absconv}\{x_1,\dots,x_n\})} \right)^n, $$
where $C_0>0$ is an absolute constant.
\end{proposition}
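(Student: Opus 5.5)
We transfer the problem to a normalized frame, where the constraint set embeds into a body $V_B^n$ whose volume is given by \eqref{eq:volume-VB}. Then Lemma~\ref{lem:standard-net} is applied in $\mathbb{R}^{n^2}$ with $K=tV_{{\rm op}}^n$, and the volume of the operator ball is bounded below by Proposition~\ref{prop:volume-V}.

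\textbf{Change of variables.} Let $S\in GL_n$ be the operator with $S(e_i)=x_i$. Then $S(B_1^n)=\operatorname{absconv}\{x_1,\dots,x_n\}$, so
$$|\det S|=\frac{\vol_n(\operatorname{absconv}\{x_1,\dots,x_n\})}{\vol_n(B_1^n)}.$$
Consider the linear map $\Phi(R)=RS$ on $M_n(\mathbb{R})$. The condition $R(x_i)\in B$ for all $i$ is equivalent to $(RS)(e_i)\in B$ for all $i$, that is, to $RS\in V_B^n$. Hence $\Phi(\mathcal A)=V_B^n$, and
$$\vol_{n^2}(\mathcal A)=|\det S|^{-n}\vol_n(B)^n,$$
since $R\mapsto RS$ acts as $S^{T}$ on each of the $n$ rows, which gives determinant $(\det S)^n$.

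\textbf{Choice of $L$.} We take $L:=\mathcal A$. It is centrally symmetric, convex and compact, because $S$ is invertible and $B$ is bounded.

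\textbf{Checking $K\subseteq L$.} We take $K:=tV_{{\rm op}}^n$. If $\|R\|_{{\rm op}}\ls t\ls r\delta_0$, then for each $i$
$$|R(x_i)|\ls t|x_i|\ls r\delta_0|x_i|\ls r.$$
So $R(x_i)\in rB_2^n\subseteq B$, and therefore $K\subseteq\mathcal A$. This is exactly where the hypothesis $t\ls r\delta_0$ enters.

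\textbf{Applying the lemma.} Lemma~\ref{lem:standard-net} in dimension $d=n^2$, with $\mathcal A_0\subseteq L$, produces a $1$-net $\mathcal N_t\subseteq\mathcal A_0$ with respect to $\|\cdot\|_K$. This is the same as a $t$-net with respect to the operator norm. Its cardinality satisfies
$$|\mathcal N_t|\ls 3^{n^2}\frac{\vol_{n^2}(\mathcal A)}{t^{n^2}\vol_{n^2}(V_{{\rm op}}^n)}.$$
Inserting Proposition~\ref{prop:volume-V} gives
$$|\mathcal N_t|\ls\Bigl(\frac{3\sqrt n}{c_2t}\Bigr)^{n^2}\Bigl(\frac{\vol_n(B)\vol_n(B_1^n)}{\vol_n(\operatorname{absconv}\{x_i\})}\Bigr)^n.$$

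\textbf{Absorbing the factor $\sqrt n$.} The factor $(\sqrt n)^{n^2}$ has to be traded for the factor $\vol_n(B_2^n)^{-n}$ in the claimed bound. Since $\omega_n^{1/n}\approx n^{-1/2}$, there is an absolute constant $c$ with
$$(\sqrt n)^{n^2}\ls (c^{-1})^{n^2}\,\omega_n^{-n}.$$
Hence the bound takes the stated form with $C_0=3/(c_2c)$.

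The only step requiring care is the determinant computation for $R\mapsto RS$. The rest is bookkeeping of constants.
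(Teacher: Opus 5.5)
Your proposal is correct and follows the paper's proof essentially step by step. The shared steps are: the change of variables $R\mapsto RS$ with $S(e_i)=x_i$, which maps $\mathcal A$ onto $V_B^n$ with Jacobian $|\det S|^n$; the inclusion $tV_{\rm op}^n\subseteq\mathcal A$ coming from $t\ls r\delta_0$; Lemma~\ref{lem:standard-net} applied in $\mathbb{R}^{n^2}$ with $K=tV_{\rm op}^n$ and $L=\mathcal A$; and Proposition~\ref{prop:volume-V} combined with $\omega_n^{1/n}\approx n^{-1/2}$ to trade $(\sqrt{n})^{n^2}$ for $\omega_n^{-n}$.
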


\begin{proof}
Let $T\in M_n(\mathbb R)$ be defined by $T(e_i)=x_i,\, i=1,\dots,n. $
Then $ T(B_1^n)=\operatorname{absconv}\{x_1,\dots,x_n\},$ and therefore $ |\det T| = {\vol_n(\operatorname{absconv}\{x_1,\dots,x_n\})}/{\vol_n(B_1^n)}. $

Consider the linear map $\Phi_T:M_n(\mathbb R)\to M_n(\mathbb R),\, \Phi_T(R)=RT.$ By definition of $\mathcal A$ we have 
$\Phi_T(\mathcal A) = \{S\in M_n(\mathbb R):S(e_i)\in B,\ i=1,\dots,n\} = V_B^n.$ If we identify $M_n(\mathbb R)$ with $\mathbb R^{n^2}$ through 
the rows of a matrix, then $\Phi_T$ acts on each row by right multiplication by $T$. 
Hence the matrix of $\Phi_T$ is block diagonal with $n$ diagonal blocks equal to $T$ and therefore $|\det \Phi_T|=|\det T|^n.$
It follows that $\vol_{n^2}(\Phi_T(\mathcal A))=|\det T|^n\,\vol_{n^2}(\mathcal A).$
Using \eqref{eq:volume-VB}, this gives
\begin{equation}\label{eq:identity-Phi}\vol_{n^2}(\mathcal A) = \left( \frac{\vol_n(B)\,\vol_n(B_1^n)} {\vol_n(\operatorname{absconv}\{x_1,\dots,x_n\})} \right)^n.\end{equation}
Next, we claim that $ tV_{\rm op}^n\subseteq \mathcal A.$
Indeed, if $S\in tV_{\rm op}^n$, then $\|S\|_{\rm op}\ls t$ and therefore for every $i=1,\dots,n$, $|S(x_i)|\ls \|S\|_{\rm op}|x_i|\ls t\max_j|x_j|\ls r. $
Thus $S(x_i)\in rB_2^n\subseteq B$, which proves the claim.

We now apply Lemma~\ref{lem:standard-net} in the $n^2$-dimensional space $M_n(\mathbb R)\simeq \mathbb R^{n^2}$ with $ K=tV_{\rm op}^n,\, L=\mathcal A,\, D=\mathcal A_0.$
Since the norm induced by $tV_{\rm op}^n$ is precisely $t^{-1}\|\cdot\|_{\rm op}$, we obtain a $t$-net $\mathcal N_t\subseteq \mathcal A_0$ with respect to $\|\cdot\|_{{\rm op}}$ 
such that
$$ |\mathcal N_t| \ls 3^{n^2}\frac{\vol_{n^2}(\mathcal A)}{\vol_{n^2}(tV_{\rm op}^n)} = 3^{n^2}t^{-n^2}\frac{\vol_{n^2}(\mathcal A)}{\vol_{n^2}(V_{\rm op}^n)}
=\left(\frac{3}{t}\right)^{n^2} \left( \frac{\vol_n(B)\,\vol_n(B_1^n)} {\vol_n(\operatorname{absconv}\{x_1,\dots,x_n\})} \right)^n \frac{1}{\vol_{n^2}(V_{\rm op}^n)},  $$
where in the last equality we substituted $\vol_{n^2}(\mathcal A)$, using the identity~\eqref{eq:identity-Phi}.

Finally, using Proposition~\ref{prop:volume-V} and the fact that $\vol_n(B_2^n)^{1/n}\approx 1/\sqrt{n}$, we get that  
$\vol_{n^2}(V_{\rm op}^n)^{-1} \ls {C^{n^2}}/{\vol_n(B_2^n)^n}.$ This completes the proof.
\end{proof}

\section{Isotropic log-concave measures}\label{section:3}

We say that a Borel probability measure $\mu$ on $\mathbb R^n$ is log-concave 
if $\mu(H)<1$ for every hyperplane $H$ in ${\mathbb R}^n$ (in which case we say that $\mu$ is full-dimensional) and $\mu(\lambda A+(1-\lambda)B) \gr \mu(A)^{\lambda}\mu(B)^{1-\lambda}$ for any pair of compact sets $A,B$ in ${\mathbb R}^n$ and any $\lambda \in (0,1)$. 
Borell \cite{Borell-1974} proved that, under these assumptions, $\mu$ has a log-concave density $f$. 

Let $f:\mathbb{R}^n\to [0,\infty)$ be a log-concave function with finite, positive integral. 
Its barycenter is defined by
$$\operatorname{bar}(f)= \frac{\int_{\mathbb{R}^n} x\, f(x)\, dx}{\int_{\mathbb{R}^n} f(x)\, dx}.$$
We say that $f$ is centered if $\operatorname{bar}(f)=0$. 
The isotropic constant of a log-concave function $f$ with finite positive integral is the affine-invariant quantity
\begin{equation}\label{eq:definition-isotropic}
L_f:= \left( \frac{\|f\|_{\infty}}{\int_{\mathbb{R}^n} f(x)\, dx} \right)^{1/n} \det(\operatorname{Cov}(f))^{1/(2n)},
\end{equation}
where $\operatorname{Cov}(f)$ denotes the covariance matrix of $f$. 
A log-concave function $f$ is called isotropic if
$$\operatorname{bar}(f)=0,\quad \int_{\mathbb{R}^n}f(x)\,dx=1,\quad \text{and} \quad \operatorname{Cov}(f)=I_n$$
where $I_n$ is the identity operator on $\mathbb{R}^n$. In this case, $L_f=\|f\|_{\infty}^{1/n}$. 
A full-dimensional log-concave probability measure $\mu$ on $\mathbb{R}^n$ is called isotropic if its density $f$ is isotropic. 
We then set $L_{\mu}:=L_f$.

Let $\mu$ and $\nu$ be two log-concave probability measures on $\mathbb{R}^n$. Let $T:\mathbb{R}^n\to \mathbb{R}^n$ be a measurable function which is defined $\mu$-almost everywhere and satisfies
$$\nu (B)=\mu(T^{-1}(B))$$
for every Borel subset $B$ of $\mathbb{R}^n$. 
We then say that $T$ pushes forward $\mu$ to $\nu $ and write $T_\ast\mu=\nu$.
It is not hard to check that for every centered log-concave probability measure $\mu$ on $\mathbb{R}^n$ there exists $T\in GL_n$ such that the log-concave probability measure $T_{\ast }\mu$ is isotropic, and $L_{T_{\ast}\mu}=L_{\mu}$ (see \cite[Section~2.3]{BGVV-book}). 
In other words, every centered log-concave probability measure $\mu$ admits an isotropic position. 

It is also known (see \cite[Proposition~2.3.12]{BGVV-book}) that $L_{\mu}\gr c$ for every isotropic log-concave measure $\mu$ on $\mathbb{R}^n$, where $c>0$ is an absolute constant. 
An equivalent formulation of Bourgain's slicing problem \cite{Bourgain-1986} asks if there exists an absolute constant $C>0$ such that
\begin{equation}\label{eq:conjecture}
L_n:=\max\{ L_{\mu}:\mu \text{ is an isotropic log-concave measure on } \mathbb{R}^n\}\ls C
\end{equation}
for all $n\gr 1$. 
An affirmative solution was recently obtained by Klartag and Lehec~\cite{Klartag-Lehec-2025}, building on an important contribution by Guan~\cite{Guan-preprint} (see also~\cite{Bizeul-2025} for an alternative proof). 
Consequently, $L_{\mu}\approx 1$, uniformly in $n$, for every isotropic log-concave measure $\mu$ on $\mathbb{R}^n$.
For further background, we refer to the survey~\cite{Giannopoulos-Pafis-Tziotziou-2025}.

Let $\mu$ be a full-dimensional log-concave probability measure on $\mathbb{R}^n$.
For any $1\ls k \ls n-1$ and any $k$-dimensional subspace $F$ of $\mathbb{R}^n$, the marginal of $\mu$ onto $F$ is defined by
$$\pi_F(\mu)(B):=\mu(P_F^{-1}(B)),$$
for every Borel set $B\subset F$. 
The measure $\pi_F(\mu)$ is log-concave and admits a density
$$(\pi_F f)(x)=\int_{F^\perp} f(y+x)\,dy.$$
If $f$ is centered (respectively isotropic), then so is $\pi_F f$ (see~\cite[Proposition~5.1.11]{BGVV-book}). 

A very useful deviation inequality of Paouris \cite{Paouris-2006} asserts that if $\mu$ is an isotropic log-concave measure  on ${\mathbb R}^n$, then
\begin{equation}\label{eq:paouris-1}
\mu \left(\{x\in {\mathbb R}^n:\ |x|\gr  ct\sqrt{n}\}\right)\ls \exp(-t\sqrt{n})
\end{equation}
for every $t\gr  1$, where $c>0$ is an absolute constant. 
We shall also use a small ball probability estimate of Paouris \cite{Paouris-2012} which is now available in its optimal form, due to Bizeul \cite{Bizeul-2025}: if $\mu$ is an isotropic log-concave measure on $\mathbb R^n$ then, for any $0<\varepsilon\ls c_0$ and any $y\in\mathbb{R}^n$,
\begin{equation}\label{eq:small-ball}
\mu\left(\{x\in\mathbb{R}^n:|x-y|^2 \ls \varepsilon n\}\right) \ls  \varepsilon^{c_0n}
\end{equation}
where $c_0>0$ is an absolute constant. 
These two facts will be used in the following form: there exist absolute constants $\varepsilon_0,b,c>0$ such that, if $m\ls \exp(\sqrt{n})$ then $m$ independent random vectors $x_1,\ldots ,x_m$ distributed according to $\mu$ satisfy
\begin{equation}\label{eq:euclidean-norm}
\varepsilon_0\sqrt{n}\ls |x_i|\ls b\sqrt{n},\qquad i=1,\ldots ,m
\end{equation}
with probability greater than $1-\exp(-c\sqrt{n})$.

Let $\mu$ be a centered log-concave probability measure on $\mathbb R^n$ with density $f$. 
For any $p\gr 1$ we define the $L_p$-centroid body $Z_p(\mu)$ of $\mu$ as the convex body whose support function is
$$ h_{Z_p(\mu)}(y):=\left(\int_{\mathbb R^n} |\langle x,y\rangle|^p f(x)\,dx \right)^{1/p},\qquad y\in \mathbb{R}^n.$$ 
The convex bodies $Z_p(\mu)$ are always centrally symmetric, and $Z_p(T_{\ast}\mu)=T(Z_p(\mu))$ for every $T\in GL_n$ and $p\gr 1$.
Note that if $\mu$ is isotropic then $Z_2(\mu)=B_2^n$. 
Also, classical reverse H\"{o}lder inequalities (see \cite[Chapter~3]{BGVV-book}) show that 
\begin{equation}\label{eq:zq-mu-inclusions}
Z_p(\mu)\subseteq Z_q(\mu)\subseteq \frac{cq}{p}Z_p(\mu)
\end{equation}
for all $1\ls p<q$, where $c>0$ is an absolute constant. 
If $\mu$ is isotropic, then combining previous results of Lutwak, Yang and Zhang \cite{Lutwak-Yang-Zhang-2000} and Paouris \cite{Paouris-2006} with the affirmative answer to Bourgain's problem, we know that
\begin{equation}\label{eq:Zp-volume} 
\vol_n(Z_p(\mu))^{1/n}\approx \sqrt{p/n}
\end{equation}
for every $1\ls p\ls n$ (see \cite[Chapter~5]{BGVV-book} for a thorough discussion).

A general study of the asymptotic shape of random polytopes whose vertices have a log-concave distribution was initiated by Dafnis, Giannopoulos and Tsolomitis in \cite{Dafnis-Giannopoulos-Tsolomitis-2009}. 
They showed that for any isotropic log-concave measure $\mu$ on ${\mathbb R}^n$ and any $cn\ls m \ls e^n$, the random polytope $B_m$ defined by $m$ independent random points $x_1,\ldots ,x_m$ distributed according to $\mu$ satisfies the inclusion
\begin{equation}\label{eq:DGT}
B_m\supseteq c_1Z_{\ln (1+m/n)}(\mu)
\end{equation}
with probability greater than $1-\exp\left (-c_0m\right)$, where $c_0>0$ is an absolute constant (see \cite[Chapter~11]{BGVV-book} for a thorough discussion). 
Combining this fact with \eqref{eq:Zp-volume} we see that
\begin{equation}\label{eq:Bm-volume-lower}
\vol_n(B_m)^{1/n}\gr \frac{c_1\sqrt {\ln (1+m/n)}}{\sqrt{n}}
\end{equation}
with probability greater than $1-\exp\left (-c_0m\right)$. 
On the other hand, it is also proved in \cite{Dafnis-Giannopoulos-Tsolomitis-2009} that for every $n\ls m\ls e^n$ one has 
\begin{equation}\label{eq:Bm-volume-upper}
\vol_n(B_m)^{1/n}\ls \frac{c_2\sqrt {\ln (1+m/n)}}{\sqrt{n}}
\end{equation}
with probability greater than $1-1/m$. In the restricted range $n\ls m\ls \exp(\sqrt{n})$, the same upper bound holds with probability
at least $1-\exp(-c\sqrt{n})$. By the Blaschke--Santal\'{o} and Bourgain--Milman inequalities (see e.g.~\cite[Chapter~8]{AGA-book}) we see that
\begin{equation}\label{eq:Bm-polar-volume}
\vol_n(B_m^{\circ })^{1/n}\approx \frac{1}{\sqrt {n\ln (1+m/n)}}\end{equation}
for a random $B_m$.

\smallskip 

We refer to~\cite{AGA-book} and \cite{AGA-book-2} for asymptotic convex geometry and to~\cite{BGVV-book} for background on isotropic convex bodies and log-concave measures.

\section{Gluskin's theorem for log-concave random spaces}\label{section:4}

In this section we present the proof of Theorem~\ref{th:1.1}. 
We provide two arguments, corresponding to the two different models of Gluskin random polytopes that were described in the introduction.

\bigskip 

\noindent \textbf{\S~4.1. Basis-enriched model.} Our first goal is to study the centrally symmetric random polytope
$$ B_m:=B_m(\omega)={\rm absconv}\left\{e_1,\ldots,e_n, \frac{x_1(\omega)}{\sqrt{n}},\ldots,\frac{x_m(\omega)}{\sqrt{n}}\right\}$$
where $\{e_1,\ldots ,e_n\}$ is the standard orthonormal basis of $\mathbb{R}^n$ and $x_1,\ldots,x_m$ are independent random vectors defined on 
some probability space $\Omega$, each distributed according to an isotropic log-concave probability measure $\mu$ on $\mathbb{R}^n$. 
We denote by $X_{B_m}$ the normed space whose unit ball is $B_m$. We also consider an independent copy of this construction on some probability space $\widetilde{\Omega}$ and denote the corresponding random polytope by $A_m=A_m(\tilde{\omega})$.
We write $Y_{A_m}$ for the normed space whose unit ball is $A_m$.

We prove the following theorem.

\begin{theorem}[log-concave Gluskin's theorem]\label{th:log-concave-gluskin-1}
Let $2n\ls m\ls\exp(\sqrt{n})$ and let  $x_1,\ldots,x_m$ and $y_1,\ldots ,y_m$ be independent random vectors in $\R^n$ with isotropic log-concave distribution $\mu$. 
Then, the random polytopes 
$$B_m={\rm absconv}\left\{e_1,\ldots,e_n, \frac{x_1}{\sqrt{n}},\ldots,\frac{x_m}{\sqrt{n}}\right\}\quad\text{and}
\quad A_m={\rm absconv}\left\{e_1,\ldots,e_n, \frac{y_1}{\sqrt{n}},\ldots,\frac{y_m}{\sqrt{n}}\right\}$$ 
satisfy
$$d_{{\rm BM}}(Y_{A_m},X_{B_m})\gr cn/\ln (1+m/n)$$
where $c>0$ is an absolute constant, with probability tending to $1$ as $n\to\infty$, uniformly in $m$.
\end{theorem}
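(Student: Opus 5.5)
We follow Gluskin's scheme, in the form used by Lata{\l}a, Mankiewicz, Oleszkiewicz and Tomczak-Jaegermann. Set $L:=\ln(1+m/n)$ and fix a small $\alpha>0$ to be chosen later.

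\textbf{Reduction.} By symmetry and normalization it suffices to show that, with high probability, every $T\in GL_n$ with $\|T:X_{B_m}\to Y_{A_m}\|\ls 1$ satisfies $\|T^{-1}:Y_{A_m}\to X_{B_m}\|\gr \alpha n/L$. Equivalently, we show there is no $T$ with $T(B_m)\subseteq A_m$ and $A_m\subseteq (\alpha n/L)\,T(B_m)$.

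\textbf{Step 1: conditioning on good events.} We condition on $A_m$, i.e.\ on $\tilde\omega$, and on the $e_i$ part of $B_m$. With probability $1-\exp(-c\sqrt n)$, \eqref{eq:euclidean-norm} gives $\varepsilon_0B_2^n/\sqrt n$-type control. More precisely, we obtain the inclusions $n^{-1/2}B_2^n\subseteq B_1^n\subseteq A_m\subseteq b B_1^n+\dots\subseteq bB_2^n$. By \eqref{eq:DGT} and \eqref{eq:Bm-volume-upper}, both $\vol_n(A_m)^{1/n}$ and $\vol_n(B_m)^{1/n}$ are of order $\sqrt{L}/n$, because of the $1/\sqrt n$ scaling.

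\textbf{Step 2: restricting the operators.} Since $e_i\in B_m$, any admissible $T$ has $T(e_i)\in A_m$. So $T$ ranges over $V_{A_m}^n$, which has volume $\vol_n(A_m)^n$ by \eqref{eq:volume-VB}. We also have $T(x_j/\sqrt n)\in A_m$ for all $j$. We then use Proposition~\ref{prop:entropy-general} with $x_i=e_i$ and $B=A_m$ to build a $t$-net $\mathcal N$ in operator norm of the admissible set of $T$'s. Here $t$ is of order $r\approx n^{-1/2}$ times a small constant. The resulting cardinality is roughly $(C\sqrt n\cdot \vol(A_m)\vol(B_1^n)/\vol(B_2^n)\vol(B_1^n))^{n}$ per row. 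This yields $|\mathcal N|\ls (C L)^{n^2/2}\cdot C^{n^2}$, that is, $\exp(Cn^2\ln L)$ at worst.

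\textbf{Step 3: key probabilistic estimate (main obstacle).} For a fixed $T$ in the net, the condition $A_m\subseteq (\alpha n/L)T(B_m)$ forces $T^{-1}$ to be large on some vertices. Equivalently, for each $j$, the point $Tx_j/\sqrt n$ must lie in $A_m+tB_2^n$, which is essentially $2A_m$. Combined with $\det$ considerations, this forces $T$ to be far from degenerate: volume comparison of $A_m\subseteq(\alpha n/L)T(B_m)$ gives $|\det T|^{1/n}\gr cL/(\alpha n)\cdot \vol(A_m)^{1/n}/\vol(B_m)^{1/n}\approx L/(\alpha n)$, since $\vol(A_m)\approx\vol(B_m)$. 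For such $T$ we bound $\mathbb P(Tx/\sqrt n\in 2A_m)=\mu(2\sqrt n\,T^{-1}A_m)$. This is the hard step in the non-Gaussian setting, and I would handle it via the bounded density: $\|f\|_\infty^{1/n}=L_\mu\ls C$ by the Klartag--Lehec theorem. That gives
$$\mu(2\sqrt nT^{-1}A_m)\ls C^n\vol(2\sqrt n T^{-1}A_m)\ls \Bigl(\frac{C\sqrt n\,\sqrt L/n}{L/(\alpha n)}\Bigr)^n=\Bigl(C\alpha\sqrt{n/L}\Bigr)^n.$$
This is too weak, and exposes the delicate point. It must be sharpened by the projection trick: we use $m\gr 2n$ points and a large-dimensional subspace on which $T^{-1}$ is large, together with the small ball estimate \eqref{eq:small-ball} for marginals. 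The target is $\mathbb P\ls (C\alpha)^{cn}$ per point. Independence over the $m$ points then gives $(C\alpha)^{cnm}$. This beats $|\mathcal N|\ls e^{Cn^2\ln L}$ once $m\gr 2n$ and $\alpha$ is small, with the $L$ loss absorbed because the required exponent is $m\ln(1/\alpha)\gr n\ln L$.

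\textbf{Step 4: conclusion.} A union bound over $\mathcal N$ finishes the argument, with an approximation step passing from net points to general $T$ using $t$ small relative to $n^{-1/2}$ and the inclusion $A_m\supseteq n^{-1/2}B_2^n$. Integrating over $\tilde\omega$ gives probability $1-e^{-c\sqrt n}$, uniformly in $m$.
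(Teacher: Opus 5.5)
Your Step~3 has a genuine gap. It is not a technical loss that a sharper marginal or small-ball estimate could repair: the bound you aim for is false for the class of operators your argument must cover.

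After the one-sided normalization $\|T:X_{B_m}\to Y_{A_m}\|\ls 1$, the only information you extract from $A_m\subseteq(\alpha n/L)\,T(B_m)$ is $|\det T|^{1/n}\gr cL/(\alpha n)$. The condition itself involves $B_m$, hence the $x_j$'s, so it can neither restrict a net that must be fixed before using the randomness of the $x_j$'s nor enter the independence argument. Now consider $T_0=(b\sqrt n)^{-1}I_n$:
\begin{itemize}
\item $T_0(e_i)\in n^{-1/2}B_2^n\subseteq B_1^n\subseteq A_m$;
\item $|\det T_0|^{1/n}=(b\sqrt n)^{-1}\gr cL/(\alpha n)$ whenever $L\ls \alpha\sqrt n/(bc)$, e.g.\ for $m\approx n$;
\item on the good event $|T_0x_j/\sqrt n|\ls n^{-1/2}$, so $T_0x_j/\sqrt n\in A_m$ for every $j$.
\end{itemize}
So $\mathbb P(Tx_j/\sqrt n\in 2A_m\ \forall j)$ is close to $1$ for an operator in your class, and the per-point target $(C\alpha)^{cn}$ cannot hold. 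Your bound $(C\alpha\sqrt{n/L})^n$ is therefore not lossy. After rescaling to determinant one, your event reads $\|T':X_{B_m}\to Y_{A_m}\|\lesssim\alpha n/L$ with $T'\in SL_n$, and already $I_n$ satisfies $\|I_n:X_{B_m}\to Y_{A_m}\|\ls b\sqrt n\ll \alpha n/L$. The projection trick cannot help either: $T_0$ has all singular values equal to $(b\sqrt n)^{-1}\ll\sqrt{L/n}$ and still meets every constraint you impose.

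The missing idea is to split the distortion symmetrically. Rescale so that $T\in SL_n$. If $\|T\|\,\|T^{-1}\|\ls\gamma^2 n$, then either $\|T:Y_{A_m}\to X_{B_m}\|\ls\gamma\sqrt n$ or $\|T^{-1}:X_{B_m}\to Y_{A_m}\|\ls\gamma\sqrt n$. Treat the two events separately; by symmetry they have the same probability. In each, condition on the target polytope and use independence of the source vertices. Since $\det T=1$ and $B_m\subseteq bB_2^n$, the bounded density and Lemma~\ref{lem:barany-furedi} give directly
\[
\mu\bigl(\delta n\,T^{-1}(B_m)\bigr)\ls (C\delta n)^n\vol_n(B_m)\ls \bigl(C\delta\sqrt L\bigr)^n ,
\]
with no marginals or small-ball estimates. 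Since $e_i\in A_m$, every such $T$ lies in $SL_n\cap\sqrt n\,V^n_{B_m}$. This set has an operator-norm $t$-net of cardinality $(C\sqrt L/t)^{n^2}$ by Proposition~\ref{prop:entropy-general} (with $x_i=e_i$, $r=1$), and passing to a net point costs $bt\sqrt n$ in norm. Take $\gamma=r_0/\sqrt L$ and $bt=\gamma$. The union bound becomes $(CL/r_0)^{n^2}(Cr_0)^{mn}\ls 2^{-mn}$ for small $r_0$, because $m\gr 2n$ and $(\ln(1+m/n))^{n/m}$ is bounded. Hence $\|T\|\,\|T^{-1}\|\gr r_0^2 n/L$ for every $T$.

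A singular-value normalization $s_k(\lambda T)=1$ with $k=\lfloor n/2\rfloor$, which forces $s_k((\lambda T)^{-1})\gr 1$, would also work. This is probably what your projection trick is reaching for. But it too only works after proving the lower bound $\gtrsim\sqrt{n/L}$ for each factor separately, never from a one-sided normalization.
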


Clearly, choosing $m\approx n$ we recover Gluskin's sharp lower bound $d_{{\rm BM}}(Y_{A_m},X_{B_m})\gr cn$.

\bigskip 

We start with a few basic geometric properties of the random polytope $B_m$ that will be needed for the distance estimate.
Since $B_m\supseteq {\rm absconv}\{e_1,\ldots,e_n\}=B_1^n$, we clearly have $\sqrt n\,B_m\supseteq B_2^n$. 
On the other hand, \eqref{eq:euclidean-norm} shows that if $m\ls \exp(\sqrt{n})$ then $|x_j|\ls b\sqrt{n}$ for all $j=1,\ldots ,m$ with probability greater than $1-\exp(-c\sqrt{n})$.
Possibly increasing $b$, we may assume that $b\gr 1$. 
We summarize these facts in the next proposition.

\begin{proposition}[geometry of $B_m$]\label{prop:geometry}
Let $n\ls m\ls \exp(\sqrt{n})$. If $x_1,\ldots,x_m$ are independent random vectors in $\R^n$ with isotropic log-concave distribution $\mu$, then
$$B_2^n\subseteq \sqrt{n}B_m\subseteq b\sqrt{n}\,B_2^n $$
with probability greater than $1-\exp(-c\sqrt{n})$, where $b\gr 1$ and $c>0$ are absolute constants.
\end{proposition}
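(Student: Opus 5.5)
The plan is to prove the two inclusions of Proposition~\ref{prop:geometry} separately. The left one is deterministic. The right one holds on the event given by \eqref{eq:euclidean-norm}.

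\emph{Left inclusion.} By definition $B_m$ contains $\pm e_1,\ldots,\pm e_n$. Since $B_m$ is convex and centrally symmetric, it contains $B_1^n={\rm absconv}\{e_1,\ldots,e_n\}$. For every $x$ we have $\|x\|_1\ls\sqrt{n}\,|x|$ by Cauchy--Schwarz, so $B_2^n\subseteq\sqrt{n}B_1^n\subseteq\sqrt{n}B_m$. This holds for every $\omega$.

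\emph{Right inclusion.} Apply \eqref{eq:euclidean-norm}, which is valid since $m\ls\exp(\sqrt{n})$. With probability greater than $1-\exp(-c\sqrt{n})$ we have $|x_j|\ls b\sqrt{n}$ for all $j\ls m$. On this event each generator $x_j/\sqrt{n}$ lies in $bB_2^n$. Replacing $b$ by $\max\{b,1\}$ if necessary, we may assume $b\gr 1$, so the generators $\pm e_i$ lie in $B_2^n\subseteq bB_2^n$ as well. The set $bB_2^n$ is convex and centrally symmetric, so it contains the absolute convex hull of all the generators, namely $B_m\subseteq bB_2^n$. Multiplying by $\sqrt{n}$ gives $\sqrt{n}B_m\subseteq b\sqrt{n}B_2^n$.

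The lower bound $m\gr n$ in the hypothesis is not used; only $m\ls\exp(\sqrt{n})$ matters, through \eqref{eq:euclidean-norm}. There is no real obstacle here. The one substantive input is the probability estimate in \eqref{eq:euclidean-norm}. It comes from Paouris' inequality \eqref{eq:paouris-1} applied with $t$ a large constant, say $t=2$, followed by a union bound over $m\ls\exp(\sqrt{n})$ points. This gives a failure probability of at most $m\exp(-2\sqrt{n})\ls\exp(-\sqrt{n})$ and fixes $b=2c$ (then increased to at least $1$ if needed).
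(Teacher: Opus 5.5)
Your proof is correct and follows the paper's argument: the left inclusion comes deterministically from $B_1^n\subseteq B_m$ and $B_2^n\subseteq\sqrt{n}B_1^n$. The right inclusion comes from the upper bound in \eqref{eq:euclidean-norm}, after enlarging $b$ to be at least $1$ so that the generators $\pm e_i$ also lie in $bB_2^n$. Your derivation of that bound from Paouris' inequality with $t=2$ and a union bound over $m\ls\exp(\sqrt{n})$ points is also correct.
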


In what follows, we set
\begin{equation}\label{eq:omega}
\Omega_0 = \left\{\omega\in\Omega:\ B_2^n\subseteq \sqrt{n}\,B_m(\omega)\subseteq b\sqrt{n}\,B_2^n\right\}\quad\text{and}\quad \widetilde{\Omega}_0 = \left\{\tilde{\omega}\in \widetilde{\Omega}:\ B_2^n\subseteq \sqrt{n}\,A_m(\tilde{\omega})\subseteq b\sqrt{n}\,B_2^n\right\}.
\end{equation}
From Proposition~\ref{prop:geometry} we know that if $n\ls m\ls \exp(\sqrt{n})$, then 
\begin{equation}\label{eq:measure-omega}
\mathbb{P}(\Omega_0)=\mathbb{P}(\widetilde{\Omega}_0)\gr  1-\exp(-c \sqrt{n}).
\end{equation}

We start with an elementary lemma.

\begin{lemma}\label{lem:measure-vs-volume}
Let $\nu$ be an isotropic log-concave probability measure on $\R^k$ and let $f_\nu$ denote its density. 
Then, for every convex body $K$ in ${\mathbb R}^k$ and every $ \alpha>0$ one has
$$ \nu(\alpha K)\ls \|f_{\nu}\|_{\infty}\alpha^k\vol_k(K)\ls (C_1\alpha)^k\vol_k(K),$$
where $C_1>0$ is an absolute constant.
\end{lemma}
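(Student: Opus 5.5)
The first inequality is a direct estimate of the integral of the density over the dilate, and the second comes from the resolution of the slicing problem.

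\emph{First inequality.} Since $\nu$ has a density $f_\nu$, we write
$$\nu(\alpha K)=\int_{\alpha K}f_\nu(x)\,dx\ls \|f_\nu\|_\infty\,\vol_k(\alpha K)=\|f_\nu\|_\infty\,\alpha^k\vol_k(K),$$
using only the homogeneity of Lebesgue measure, $\vol_k(\alpha K)=\alpha^k\vol_k(K)$. Convexity of $K$ plays no role here beyond measurability.

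\emph{Second inequality.} This reduces to bounding $\|f_\nu\|_\infty^{1/k}$ by an absolute constant. Because $\nu$ is isotropic, the definition \eqref{eq:definition-isotropic} gives $L_\nu=\|f_\nu\|_\infty^{1/k}$, so $\|f_\nu\|_\infty=L_\nu^k\ls L_k^k$. By the affirmative solution of Bourgain's slicing problem (Klartag--Lehec), $L_k\ls C$ uniformly in $k$. Hence $\|f_\nu\|_\infty\ls C^k$, and the claim holds with $C_1=C$.

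The only nonelementary step is the uniform bound on $L_k$, which is quoted from the literature. No step requires real work: the point of the lemma is just to convert $\|f_\nu\|_\infty$ into the clean exponential form $(C_1\alpha)^k$ for later volumetric use. The lemma will be applied to the marginals $\pi_F(\mu)$, which are again isotropic log-concave, so the same constant $C_1$ works for all dimensions $k$.
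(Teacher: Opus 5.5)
Your proof is correct and matches the paper's argument essentially verbatim: bound $\nu(\alpha K)$ by $\|f_\nu\|_\infty\,\alpha^k\vol_k(K)$ using homogeneity of Lebesgue measure. Then use $\|f_\nu\|_\infty=L_\nu^k$ for isotropic $\nu$, together with the uniform bound on the isotropic constant that follows from the resolution of the slicing problem.
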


\begin{proof}
We simply write
$$\nu(\alpha K)=\int_{\alpha K}f_{\nu}(x)\,dx\ls \|f_{\nu}\|_{\infty}\vol_k(\alpha K)=\|f_{\nu}\|_{\infty}\alpha^k\vol_k(K).$$
Since $\nu$ is isotropic, its isotropic constant satisfies $L_\nu=\|f_\nu\|_\infty^{1/k}$ and hence $\|f_\nu\|_\infty=L_\nu^k\ls C_1^k,$ where $C_1>0$ is an absolute constant. 
\end{proof}

The next volume estimate is also standard (see \cite{Barany-Furedi-1987} and \cite{Gluskin-1989}).

\begin{lemma}\label{lem:barany-furedi} 
Let $m\gr k$ and let $Q={\rm absconv}\{z_1,\ldots ,z_m\}\subset rB_2^k$ for some $r>0$. 
Then,
$$ \vol_k(Q)\ls \left(\frac{C_2r\sqrt{\ln(1+m/k)}}{\sqrt{k}}\right)^k \vol_k(B_2^k),$$
where $C_2>0$ is an absolute constant.
\end{lemma}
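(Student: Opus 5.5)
The plan is to pass to the polar body and combine two ingredients: the Blaschke--Santal\'{o} inequality, which turns a lower bound for $\vol_k(Q^{\circ})$ into an upper bound for $\vol_k(Q)$, and a lower bound for $\vol_k(Q^{\circ})$ obtained from a Gaussian estimate. We may assume that $Q$ has nonempty interior in $\R^k$; otherwise $\vol_k(Q)=0$ and there is nothing to prove. Since $Q\subseteq rB_2^k$, we have $|z_j|\ls r$ for all $j$. The polar body is
$$Q^{\circ}=\{y\in\R^k:\ |\langle y,z_j\rangle|\ls 1,\ j=1,\ldots ,m\}.$$
By the Blaschke--Santal\'{o} inequality,
$$\vol_k(Q)\,\vol_k(Q^{\circ})\ls \vol_k(B_2^k)^2.$$
It therefore suffices to show that
$$\vol_k(Q^{\circ})\gr \left(\frac{c\sqrt{k}}{r\sqrt{\ln(1+m/k)}}\right)^k\vol_k(B_2^k).$$

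To bound $\vol_k(Q^{\circ})$ from below, I would compare with the standard Gaussian measure $\gamma_k$. Set $s:=\sqrt{\ln(1+m/k)}$ and $t:=a r s$, where $a$ is an absolute constant to be chosen.

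\begin{itemize}
\item A point $y$ lies in $tQ^{\circ}$ if and only if $|\langle y,z_j\rangle|\ls t$ for every $j$.
\item For $g\sim\gamma_k$, each $\langle g,z_j\rangle$ is a centered Gaussian with variance $|z_j|^2\ls r^2$.
\item By the \v{S}id\'{a}k--Khatri inequality (symmetric slabs are positively correlated under $\gamma_k$),
$$\gamma_k(tQ^{\circ})\gr \prod_{j=1}^m \mathbb{P}\big(|\langle g,z_j\rangle|\ls t\big)\gr \left(1-2e^{-a^2s^2/2}\right)^m.$$
\item Choosing $a$ a large absolute constant, we have $2e^{-a^2s^2/2}\ls \tfrac12 (1+m/k)^{-a^2/2}$. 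Using $1-x\gr e^{-2x}$ for $x\ls 1/2$, we get
$$\gamma_k(tQ^{\circ})\gr \exp\!\big(-m(1+m/k)^{-a^2/2}\big)\gr e^{-k}.$$
The last step holds because $m(1+m/k)^{-2}\ls k$ once $a^2/2\gr 2$.
\end{itemize}

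Next, the Gaussian density is bounded by $(2\pi)^{-k/2}$, so
$$\vol_k(tQ^{\circ})\gr (2\pi)^{k/2}\gamma_k(tQ^{\circ})\gr (2\pi)^{k/2}e^{-k}.$$
Recall that $\vol_k(B_2^k)^{1/k}\approx 1/\sqrt{k}$. Hence
$$\vol_k(Q^{\circ})^{1/k}\gr \frac{c}{t}\gr \frac{c'\sqrt{k}}{r s}\,\vol_k(B_2^k)^{1/k}.$$
Inserting this into the Blaschke--Santal\'{o} inequality gives the lemma.

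The only delicate point is the tail-to-product step. Correctness relies on the Gaussian correlation for symmetric slabs, i.e.\ \v{S}id\'{a}k's lemma.

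If one prefers to avoid \v{S}id\'{a}k's lemma, there is a weaker alternative: use a union bound, $\gamma_k(tQ^{\circ})\gr 1-2me^{-t^2/(2r^2)}$. This yields the bound only with $\ln(1+m)$ in place of $\ln(1+m/k)$.

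To recover the sharp $\ln(1+m/k)$ with \v{S}id\'{a}k's lemma, one must handle the regime $m\approx k$. There $s$ is of order $1$, and the product bound $e^{-cm}$ is still $\gr e^{-Ck}$, which is exactly what is needed.
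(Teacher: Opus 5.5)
The paper does not prove this lemma. It quotes the estimate as standard, citing B\'{a}r\'{a}ny--F\"{u}redi and Gluskin, so there is no in-text argument to compare with. Your proof is correct and self-contained. You pass to $Q^{\circ}$, which is an intersection of $m$ symmetric slabs, and bound $\gamma_k(tQ^{\circ})$ from below by \v{S}id\'{a}k's inequality. You then convert Gaussian measure into volume using the bound $(2\pi)^{-k/2}$ on the Gaussian density, and return to $Q$ via Blaschke--Santal\'{o}. This is a standard dual route to the estimate, using only two classical inequalities. As you observe, the product structure supplied by \v{S}id\'{a}k's inequality is exactly what yields $\ln(1+m/k)$ rather than the $\ln(1+m)$ a union bound would give.

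One step needs correcting. Since $s^2=\ln(1+m/k)$, you have $2e^{-a^2s^2/2}=2(1+m/k)^{-a^2/2}$ exactly. So the inequality $2e^{-a^2s^2/2}\ls\frac12(1+m/k)^{-a^2/2}$ is false for every $a$. The fix is to carry the factor $2$. Set $x:=2(1+m/k)^{-a^2/2}$. The hypothesis $m\gr k$ gives $1+m/k\gr 2$, so $x\ls 2\cdot 2^{-a^2/2}\ls 1/2$ once $a\gr 2$. Using $1-x\gr e^{-2x}$ on $[0,1/2]$ and writing $u=m/k$, you get
\[
\gamma_k(tQ^{\circ})\gr (1-x)^m\gr e^{-2mx}=\exp\big(-4m(1+u)^{-a^2/2}\big)\gr \exp\big(-4k\,u(1+u)^{-2}\big)\gr e^{-k},
\]
where the last step uses $u/(1+u)^2\ls 1/4$. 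In any case, any lower bound of the form $e^{-Ck}$ would suffice, since it only changes the absolute constant $C_2$. With this correction the rest of your argument goes through as written.
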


An immediate consequence of the above is the following:

\begin{proposition}\label{prop:gluskin-lower-bound-4}
Let $T\in SL_n$ and let $\omega\in \Omega_0$. Then, for any $ \delta>0$ we have
$$ \mathbb{P}\left(\left\{ \tilde{\omega}\in \widetilde{\Omega}_0:\|T: Y_{A_m(\tilde{\omega})}\to X_{B_m(\omega)}\| \ls \delta\sqrt{n}\right\} \right) \ls\left(C_3b\delta L_\mu\sqrt{\ln(1+m/n)}\right)^{mn}.$$
\end{proposition}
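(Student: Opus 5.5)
The event $\|T:Y_{A_m}\to X_{B_m}\|\ls\delta\sqrt n$ is equivalent to $T(A_m)\subseteq\delta\sqrt n\,B_m$. Since $A_m$ is the absolute convex hull of $e_1,\ldots,e_n$ and $y_j/\sqrt n$, this is in turn equivalent to $T(e_i)\in\delta\sqrt n B_m$ for all $i$ and $T(y_j)\in \delta n B_m$ for all $j$. I will drop the conditions on the $e_i$ and on $\widetilde\Omega_0$, which only enlarges the event, so it suffices to bound
$$\mathbb P\bigl(T(y_j)\in\delta nB_m\ \text{for all } j\le m\bigr)=\mu\bigl(\delta n\,T^{-1}(B_m)\bigr)^m,$$
using the independence of $y_1,\ldots,y_m$ (here $\omega$ is fixed, so $B_m$ is a fixed body).

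Each factor is bounded by Lemma~\ref{lem:measure-vs-volume} with $k=n$, in the sharper form $\nu(\alpha K)\ls \|f_\nu\|_\infty\alpha^k\vol_k(K)$ and $\|f_\mu\|_\infty=L_\mu^n$. This gives
$$\mu(\delta nT^{-1}B_m)\ls L_\mu^n(\delta n)^n\vol_n(T^{-1}B_m)=L_\mu^n(\delta n)^n\vol_n(B_m),$$
where the last equality uses $T\in SL_n$.

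Next, $\vol_n(B_m)$ is bounded by Lemma~\ref{lem:barany-furedi}. Since $\omega\in\Omega_0$, we have $B_m\subseteq bB_2^n$, and $B_m$ is the absolute convex hull of $n+m\ls 2m$ points, so
$$\vol_n(B_m)\ls \left(\frac{C_2b\sqrt{\ln(1+2m/n)}}{\sqrt n}\right)^n\omega_n.$$
Combining this with $\omega_n^{1/n}\ls C/\sqrt n$ and $\ln(1+2m/n)\ls 2\ln(1+m/n)$, we get
$$\mu(\delta nT^{-1}B_m)\ls \Bigl(L_\mu\,\delta n\cdot \frac{C b\sqrt{\ln(1+m/n)}}{\sqrt n}\cdot\frac{C}{\sqrt n}\Bigr)^n=\bigl(C_3b\delta L_\mu\sqrt{\ln(1+m/n)}\bigr)^n.$$
Raising this to the power $m$ gives the claimed bound.

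There is no real obstacle; the only step needing care is the normalization. The factor $n$ from the scaling $\delta n$ must cancel exactly against the two $1/\sqrt n$ factors, one coming from the Bárány–Füredi estimate and one from $\omega_n^{1/n}$. I also keep $L_\mu$ explicit rather than absorbing it into $C_1$, as the statement displays it, and only later invoke the bound $L_\mu\ls C$.
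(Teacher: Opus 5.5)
Your proof is correct and follows the paper's argument essentially verbatim. You reduce to $y_j\in\delta n\,T^{-1}(B_m)$ for all $j$, use independence, bound $\mu(\delta n\,T^{-1}(B_m))$ via Lemma~\ref{lem:measure-vs-volume} and $\det T=1$, and control $\vol_n(B_m)$ by Lemma~\ref{lem:barany-furedi} using $B_m\subseteq bB_2^n$ on $\Omega_0$; your step $\ln(1+2m/n)\ls 2\ln(1+m/n)$ just spells out how the paper absorbs $\ln(1+(m+n)/n)$ into the constant.
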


\begin{proof}
Let $\delta>0$. 
If $\|T:Y_{A_m(\tilde{\omega})}\to X_{B_m(\omega)}\|\ls \delta\sqrt n$, then since $y_j(\tilde{\omega})/\sqrt n\in A_m(\tilde{\omega})$ for every $j=1,\ldots,m$, we have  $T(y_j(\tilde{\omega})/\sqrt n)\in \delta\sqrt n\,B_m(\omega)$ and hence $y_j(\tilde{\omega})\in \delta n\,T^{-1}(B_m(\omega))$ for all $j$. 
Therefore,
\begin{align*}
&\mathbb{P}\left(\left\{\tilde{\omega}\in \widetilde{\Omega}_0:\ \|T:Y_{A_m(\tilde{\omega})}\to X_{B_m(\omega)}\|\ls \delta\sqrt n\right\}\right)\\
&\qquad\ls \mathbb{P}\left(\left\{\tilde{\omega}\in \widetilde{\Omega}_0:\ y_j(\tilde{\omega})\in \delta n\,T^{-1}(B_m(\omega))\ \text{for all }j=1,\ldots,m\right\}\right) = \left(\mu(\delta n\,T^{-1}(B_m(\omega)))\right)^m,
\end{align*}
where we used independence of the random vectors $y_j(\tilde{\omega})$ in the last step.

By Lemma~\ref{lem:measure-vs-volume}, $ \mu(\delta n\,T^{-1}(B_m(\omega))) \ls (\delta nL_\mu)^n\,\vol_n(T^{-1}(B_m(\omega))).$
Since $T$ is volume preserving, we have $ \vol_n(T^{-1}(B_m(\omega)))=\vol_n(B_m(\omega)). $

Moreover, because $\omega\in \Omega_0$, we have $B_m(\omega)\subseteq bB_2^n$. 
Hence Lemma~\ref{lem:barany-furedi} gives
$$ \vol_n(B_m(\omega)) \ls \left(\frac{C_2b\sqrt{\ln(1+(m+n)/n)}}{\sqrt{n}}\right)^n\vol_n(B_2^n) \ls \left(\frac{C_3b\sqrt{\ln(1+m/n)}}{n}\right)^n,$$
since $\vol_n(B_2^n)^{1/n}\approx 1/\sqrt{n}$. 
Combining the previous estimates, we obtain
$$ \mathbb{P}\left(\left\{\tilde{\omega}\in \widetilde{\Omega}_0:\ \|T:Y_{A_m(\tilde{\omega})}\to X_{B_m(\omega)}\|\ls \delta\sqrt n\right\}\right)  \ls \left(C_3b\delta L_\mu\sqrt{\ln(1+m/n)}\right)^{mn}$$
as claimed.
\end{proof}

\begin{definition}
Let $b>0$ be the constant from Proposition~\ref{prop:geometry}. Recall that for any centrally symmetric convex body $B\subset \mathbb R^n$ we denote
$$V_B^n=\{T\in M_n(\mathbb{R}):\|T:\ell_1^n\to X_B\|\ls 1\}=\{T\in M_n(\mathbb R) : T(e_i)\in B, \; i=1,\ldots,n\}.$$
We define
\begin{equation}\label{eq:MB}\mathcal{M}_B=SL_n\cap \sqrt n\,V_B^n.\end{equation}
\end{definition}

With this notation we have the following entropy estimate.

\begin{proposition}\label{prop:entropy-first-model} 
Let $\omega\in \Omega_0$ and $0<t<1$. 
Then there exists a $t$-net $\mathcal{N}_t(\omega)$, with respect to the operator norm, for the set $\mathcal{M}_{B_m(\omega)}$ such that
$$ |\mathcal{N}_t(\omega)|\ls \left(\frac{C_4b\sqrt{\ln (1+m/n)}}{t}\right)^{n^2},$$
where $C_4>0$ is an absolute constant.
\end{proposition}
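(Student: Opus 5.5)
The plan is to obtain the net directly from Proposition~\ref{prop:entropy-general}, applied with $x_i=e_i$ for $i=1,\ldots,n$ and with the convex body $B:=\sqrt{n}\,B_m(\omega)$. With these choices, the set $\mathcal A=\{R\in M_n(\mathbb R): R(e_i)\in \sqrt n\,B_m(\omega),\ i=1,\ldots,n\}$ is exactly $\sqrt n\,V^n_{B_m(\omega)}$. Hence $\mathcal A_0:=\mathcal{M}_{B_m(\omega)}=SL_n\cap\sqrt n\,V^n_{B_m(\omega)}$ is a subset of $\mathcal A$, which is all that Proposition~\ref{prop:entropy-general} needs.

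Next I check the hypotheses. Since $|e_j|=1$, we have $\delta_0=1$. Since $\omega\in\Omega_0$, we have $B_2^n\subseteq \sqrt n\,B_m(\omega)$, so we may take $r=1$. Thus every $0<t<1$ satisfies $t\ls r\delta_0$. Moreover, $\operatorname{absconv}\{e_1,\ldots,e_n\}=B_1^n$, so the factors $\vol_n(B_1^n)$ cancel in the bound of Proposition~\ref{prop:entropy-general}. We get a $t$-net $\mathcal N_t(\omega)\subseteq \mathcal M_{B_m(\omega)}$ in the operator norm with
$$|\mathcal N_t(\omega)|\ls \left(\frac{C_0}{t}\right)^{n^2}\left(\frac{n^{n/2}\vol_n(B_m(\omega))}{\vol_n(B_2^n)}\right)^{n}.$$

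It remains to bound $\vol_n(B_m(\omega))$. Because $\omega\in\Omega_0$, we have $B_m(\omega)\subseteq bB_2^n$. The polytope $B_m(\omega)$ is the absolute convex hull of $m+n\gr n$ points, so Lemma~\ref{lem:barany-furedi} with $k=n$ and $r=b$ gives
$$\vol_n(B_m(\omega))\ls \left(\frac{C_2 b\sqrt{\ln(2+m/n)}}{\sqrt n}\right)^n\vol_n(B_2^n).$$
Since $m\gr n$, we have $\ln(2+m/n)\ls 2\ln(1+m/n)$. Therefore the ratio $n^{n/2}\vol_n(B_m(\omega))/\vol_n(B_2^n)$ is at most $(\sqrt2\, C_2 b\sqrt{\ln(1+m/n)})^n$. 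Raising this to the power $n$ and combining with the factor $(C_0/t)^{n^2}$ yields the claimed bound with $C_4=\sqrt2\,C_0C_2$.

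There is no serious obstacle in this argument. The only points needing care are that the net produced by Proposition~\ref{prop:entropy-general} lies inside $\mathcal A_0=\mathcal M_{B_m(\omega)}$ as required, and that the extra $n$ vertices $e_i$ only change $m$ to $m+n$ inside the logarithm, which costs an absolute constant.
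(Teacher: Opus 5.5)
Your proof is correct and follows the paper's argument: apply Proposition~\ref{prop:entropy-general} with $x_i=e_i$, $r=1$ and $B=\sqrt n\,B_m(\omega)$, so that the $B_1^n$ factors cancel, and then bound $\vol_n(\sqrt n\,B_m(\omega))$ using $B_m(\omega)\subseteq bB_2^n$ and Lemma~\ref{lem:barany-furedi}. Your step $\ln(2+m/n)\ls 2\ln(1+m/n)$, which accounts for the $m+n$ vertices, is slightly more careful than the paper's. The paper writes $\ln((m+n)/n)$ at this point, where the lemma actually gives $\ln(1+(m+n)/n)$; the discrepancy costs only an absolute constant, as you showed.
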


\begin{proof}
Set $B=\sqrt{n}B_m(\omega)$. 
Since $\omega\in\Omega_0$, we know that $B_2^n\subseteq B$. 
Moreover, 
$$\mathcal{M}_{B_m(\omega)}=\{T\in SL_n:T(e_i)\in B,\;i=1,\ldots ,n\}\subseteq V_B^n.$$
Thus, applying Proposition~\ref{prop:entropy-general} with $x_i=e_i$ and $r=1$, for every $0<t<1$ we may find a $t$-net $\mathcal{N}_t(\omega)$ for the set $\mathcal{M}_{B_m(\omega)}$ with
$$|\mathcal{N}_t(\omega)|\ls \left(\frac{C_0}{t}\right)^{n^2} \left(\frac{\vol_n(B)}{\vol_n(B_2^n)}\right)^n. $$
Since $B=\sqrt{n}B_m(\omega)\subseteq b\sqrt{n}B_2^n$, Lemma~\ref{lem:barany-furedi} gives
$$\vol_n(B)\ls \left(C_2b\sqrt{\ln((m+n)/n)}\right)^n\vol_n(B_2^n). $$
Combining the above, we obtain
$$|\mathcal{N}_t(\omega)|\ls \left(\frac{C_0C_2b\sqrt{\ln(1+m/n)}}{t}\right)^{n^2}. $$
This completes the proof with $C_4=C_0C_2$.
\end{proof}

\begin{proof}[\textbf{Proof of Theorem~$\mathbf{\ref{th:log-concave-gluskin-1}}$}] 
For each $\gamma>0$ and $m\in \mathbb{N}$ we define the following subsets of $\Omega_0\times \widetilde{\Omega}_0$:
$$ G(\gamma,m)=\{(\omega,\tilde{\omega}) : \;\hbox{there exists}\; T\in SL_n\;\hbox{ such that }\; \|T:Y_{A_m(\tilde{\omega})}\to X_{B_m(\omega)}\|\ls \gamma\sqrt{n}\}$$ 
and
$$ G^\ast(\gamma,m)=\{(\omega,\tilde{\omega}) : \;\hbox{there exists}\; T\in SL_n\;\hbox{ such that }\; \|T:X_{B_m(\omega)}\to Y_{A_m(\tilde{\omega})}\|\ls \gamma\sqrt{n}\}.$$
We also set
$$ H(\gamma,m)=(\Omega_0\times \widetilde{\Omega}_0)\setminus(G(\gamma,m)\cup G^\ast(\gamma,m)).$$
Let $\mathbb{P}$ also denote the product probability measure on $\Omega\times \tilde{\Omega}$. 
Then
$$ \mathbb{P}\left(H(\gamma,m)\right) = \mathbb{P}\left(\Omega_0\times \widetilde{\Omega}_0\right) - \mathbb{P}\left(G(\gamma,m)\cup G^\ast(\gamma,m)\right). $$
Therefore
$$ \mathbb{P}\left(H(\gamma,m)\right) \gr  \mathbb{P}\left(\Omega_0\times \widetilde{\Omega}_0\right) - \mathbb{P}\left(G(\gamma,m)\right) - \mathbb{P}\left(G^\ast(\gamma,m)\right). $$

By symmetry, $ \mathbb{P}\left(G^\ast(\gamma,m)\right)=\mathbb{P}\left(G(\gamma,m)\right). $
Moreover, by \eqref{eq:measure-omega}, $\mathbb{P}\left(\Omega_0\times \widetilde{\Omega}_0\right)\gr \left(1-e^{-c\sqrt n}\right)^2 \gr 1-2e^{-c\sqrt n}. $
Combining these estimates, we obtain
$$ \mathbb{P}\left(H(\gamma,m)\right) \gr  1-2\exp(-c\sqrt n)-2\mathbb{P}\left(G(\gamma,m)\right). $$

Thus, it is enough to prove that $\mathbb{P}(G(\gamma,m))\to 0$ uniformly for all integers $m$ satisfying $2n\ls m\ls \exp(\sqrt n).$
In fact, we shall prove the stronger estimate $\mathbb{P}(G(\gamma,m))\ls 2^{-nm}$ for a suitable choice of $\gamma$.
Combined with the previous estimate this implies that $\mathbb{P}(H(\gamma,m))\to 1$ as $n\to\infty$. 
Then, by the definition of $H(\gamma,m)$,  for every $(\omega,\tilde{\omega})\in H(\gamma,m)$ and every $T\in SL_n$ we have
$$\|T:Y_{A_m(\tilde{\omega})}\to X_{B_m(\omega)}\|\gr \gamma\sqrt{n}\quad\text{and}\quad \|T^{-1}:X_{B_m(\omega)}\to Y_{A_m(\tilde{\omega})}\|\gr \gamma\sqrt{n}.$$
Now, let $S\in GL_n$ and define $ T=|\det S|^{-1/n}S.$
Then $T\in SL_n$ and
\begin{align*}
&\|S:Y_{A_m(\tilde{\omega})}\to X_{B_m(\omega)}\|\cdot \|S^{-1}:X_{B_m(\omega)}\to Y_{A_m(\tilde{\omega})}\| \\
&\hspace*{1.5cm}= \|T:Y_{A_m(\tilde{\omega})}\to X_{B_m(\omega)}\|\cdot \|T^{-1}:X_{B_m(\omega)}\to Y_{A_m(\tilde{\omega})}\|\gr \gamma^2n.
\end{align*}
Therefore,
$$d_{{\rm BM}}(Y_{A_m(\tilde{\omega})},X_{B_m(\omega)})\gr \gamma^2n.$$

It remains to choose $\gamma$. 
We first fix $\omega\in \Omega_0$ and define the following subsets of $\widetilde{\Omega}_0$: 
$$ Q(\omega,\gamma,m)=\{\tilde{\omega}\in \widetilde{\Omega}_0 : (\omega,\tilde{\omega})\in G(\gamma,m)\}.$$
and
$$Q_t(\omega,\gamma,m)=\{\tilde{\omega}\in \widetilde{\Omega}_0 : \;\hbox{there exists}\; T\in \mathcal{N}_t(\omega)\;\hbox{with} \; \|T:Y_{A_m(\tilde{\omega})}\to X_{B_m(\omega)}\|\ls \gamma\sqrt{n}\},$$
where $0<t<1$ will be suitably chosen and $\mathcal{N}_t(\omega)$ is the $t$-net of $\mathcal{M}_{B_m(\omega)}$ from Proposition~\ref{prop:entropy-first-model}.

\begin{claim}
For any $\omega\in\Omega_0$ and $0<\gamma\ls 1$ one has $Q(\omega,\gamma,m)\subseteq Q_t\left(\omega,\gamma+bt,m\right).$
\end{claim}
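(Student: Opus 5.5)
Given $\tilde\omega\in Q(\omega,\gamma,m)$, pick $T\in SL_n$ with $\|T:Y_{A_m(\tilde\omega)}\to X_{B_m(\omega)}\|\ls\gamma\sqrt n$. The plan is to show that $T$ lies in $\mathcal{M}_{B_m(\omega)}$, take a nearby net point, and control the error in the norm $Y_{A_m}\to X_{B_m}$ via the Euclidean inclusions that hold on $\Omega_0$ and $\widetilde\Omega_0$.

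\emph{Step 1: $T\in\mathcal{M}_{B_m(\omega)}$.} Since $e_i\in A_m(\tilde\omega)$, we get $\|T(e_i)\|_{B_m(\omega)}\ls\gamma\sqrt n\ls\sqrt n$, because $\gamma\ls1$. Hence $T(e_i)\in\sqrt n\,B_m(\omega)$ for every $i$. Together with $T\in SL_n$, this gives $T\in SL_n\cap\sqrt n\,V^n_{B_m(\omega)}=\mathcal{M}_{B_m(\omega)}$.

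\emph{Step 2: approximation.} By Proposition~\ref{prop:entropy-first-model}, there is $S\in\mathcal{N}_t(\omega)\subseteq\mathcal{M}_{B_m(\omega)}$ with $\|T-S\|_{\rm op}\ls t$.

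\emph{Step 3: transfer the operator-norm error.} We compare norms through $B_2^n$. Since $\tilde\omega\in\widetilde\Omega_0$, we have $\sqrt n\,A_m\subseteq b\sqrt n\,B_2^n$, so $A_m\subseteq bB_2^n$ and $\|z\|_{A_m}\gr|z|/b$. Since $\omega\in\Omega_0$, we have $B_2^n\subseteq\sqrt n B_m$, so $\|w\|_{B_m}\ls\sqrt n\,|w|$. Therefore
$$\|T-S:Y_{A_m}\to X_{B_m}\|\ls \|{\rm id}:Y_{A_m}\to\ell_2^n\|\,\|T-S\|_{\rm op}\,\|{\rm id}:\ell_2^n\to X_{B_m}\|\ls b\cdot t\cdot\sqrt n .$$
The triangle inequality then yields $\|S:Y_{A_m}\to X_{B_m}\|\ls(\gamma+bt)\sqrt n$. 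This means $\tilde\omega\in Q_t(\omega,\gamma+bt,m)$.

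The only delicate point is Step 1. The net is built for $\mathcal{M}_{B_m(\omega)}$ and not for all of $SL_n$, so we must check membership. That check is exactly where the hypotheses $\gamma\ls1$ and $e_i\in A_m$ (from the basis-enriched model) are used. The remaining steps are routine bookkeeping of the constants in the inclusions defining $\Omega_0$ and $\widetilde\Omega_0$.
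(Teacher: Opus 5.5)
Your proposal is correct and follows the paper's proof essentially line for line. You use $e_i\in A_m(\tilde\omega)$ and $\gamma\ls 1$ to place $T$ in $\mathcal{M}_{B_m(\omega)}$, then pick a net point $S$ with $\|T-S\|_{\rm op}\ls t$, and finally bound the error by $\|I_n:Y_{A_m}\to\ell_2^n\|\,\|T-S\|_{\rm op}\,\|I_n:\ell_2^n\to X_{B_m}\|\ls bt\sqrt n$ together with the triangle inequality.
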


\begin{proof}[Proof of the claim] 
Let $\tilde{\omega}\in Q(\omega,\gamma,m)$. Then, there exists $T\in SL_n$ such that 
$$\|T:Y_{A_m(\tilde{\omega})}\to X_{B_m(\omega)}\|\ls \gamma \sqrt{n}\ls \sqrt{n}.$$ 
Since $e_i\in A_m(\tilde{\omega})$ for every $i=1,\ldots,n$, we have $ T(e_i)\in \sqrt n\,B_m(\omega)$ and therefore $ T\in \sqrt n\,V^n_{B_m(\omega)}$. 
It follows that $T\in \mathcal{M}_{B_m(\omega)}$ and hence we may find $S\in \mathcal{N}_t(\omega)$ such that $ \|T-S:\ell_2^n\to \ell_2^n\|\ls t.$
Note that $\|I_n:Y_{A_m(\tilde{\omega})}\to \ell_2^n\|\ls b$ and $\|I_n:\ell_2^n\to X_{B_m(\omega)}\|\ls \sqrt{n}$ by Proposition~\ref{prop:geometry}.
Then, 
$$ \|T-S:Y_{A_m(\tilde{\omega})}\to X_{B_m(\omega)}\| \ls \|I_n:Y_{A_m(\tilde{\omega})}\to \ell_2^n\|\,\|T-S:\ell_2^n\to \ell_2^n\|\,\|I_n:\ell_2^n\to X_{B_m(\omega)}\| \ls bt\sqrt n.$$
Hence
$$ \|S:Y_{A_m(\tilde{\omega})}\to X_{B_m(\omega)}\|\ls \|S-T:Y_{A_m(\tilde{\omega})}\to X_{B_m(\omega)}\|+\|T:Y_{A_m(\tilde{\omega})}\to X_{B_m(\omega)}\|\ls \left(\gamma+bt\right)\sqrt n.$$
Thus $\tilde{\omega}\in Q_t\left(\omega,\gamma+bt,m\right)$ and the claim follows. 
\end{proof}

\medskip

Using Proposition~\ref{prop:entropy-first-model}, we obtain
\begin{align*}
\mathbb{P}(Q(\omega,\gamma,m))
&\ls \mathbb{P}\left(Q_t\left(\omega,\gamma+bt,m\right)\right)\\
&\ls \sum_{T\in \mathcal{N}_t(\omega)} \mathbb{P}\left(\left\{\tilde{\omega}\in \widetilde{\Omega}_0:\ \|T:Y_{A_m(\tilde{\omega})}\to X_{B_m(\omega)}\| \ls \left(\gamma+bt\right)\sqrt n\right\}\right)\\
&\ls |\mathcal{N}_t(\omega)|\, \sup_{T\in \mathcal{N}_t(\omega)} \mathbb{P}\left(\left\{\tilde{\omega}\in \widetilde{\Omega}_0:\ \|T:Y_{A_m(\tilde{\omega})}\to X_{B_m(\omega)}\|\ls \left(\gamma+bt\right)\sqrt n\right\}\right).
\end{align*}
Since $\mathcal{N}_t(\omega)\subseteq \mathcal{M}_{B_m(\omega)}\subseteq SL_n$, Proposition~\ref{prop:gluskin-lower-bound-4} gives
$$ \mathbb{P}(Q(\omega,\gamma,m)) \ls \left(\frac{C_4b\sqrt{\ln(1+m/n)}}{t}\right)^{n^2} \left(C_3b\left(\gamma+bt\right)L_{\mu}\sqrt{\ln(1+m/n)}\right)^{mn}.$$
Now, write $\gamma ={r_0}/{\sqrt{\ln (1+m/n)}}$, where $r_0$ is an absolute positive constant to be chosen small enough and choose $0<t<1$ so that $\gamma=bt$. 
Then
\begin{align*}
\mathbb{P}(Q(\omega,\gamma,m)) &\ls \left(\frac{C_4b^2\sqrt{\ln(1+m/n)}}{\gamma}\right)^{n^2} \left(2C_3b\gamma L_{\mu}\sqrt{\ln(1+m/n)}\right)^{mn}\\
&\ls \left(\frac{C_4^{n/m}b^{2n/m}(\ln(1+m/n))^{n/m}}{r_0^{n/m}}(2C_3bL_{\mu}r_0)\right)^{mn}\\
&=\left(2C_3C_4^{n/m}b^{1+2n/m}L_{\mu}(\ln(1+m/n))^{n/m}\,r_0^{1-n/m}\right)^{mn}.
\end{align*}
Since $m\gr 2n$ and $x\mapsto \big(\ln (1+x)\big)^{1/x}$ is bounded on $[2,\infty)$, using also the fact that $C_3,C_4,b$ are absolute constants and $L_{\mu}\ls C$, we see that we can choose a small absolute constant $r_0>0$ so that
$$2C_3C_4^{n/m}b^{1+2n/m}L_{\mu}(\ln(1+m/n))^{n/m}\,r_0^{1-n/m}\ls \frac{1}{2}.$$
Therefore, $\mathbb{P}(Q(\omega,\gamma,m)) \ls 2^{-mn}.$
Consequently,
$$ \mathbb{P}(G(\gamma,m)) = \int_{\Omega_0}\mathbb{P}(Q(\omega,\gamma,m))\,d\mathbb{P}(\omega)  \ls \sup\{\mathbb{P}(Q(\omega,\gamma,m)):\ \omega\in \Omega_0\} 
\ls \left(\frac{1}{2}\right)^{mn}.$$
Therefore, 
$$\mathbb{P}(H(\gamma,m)) \gr  1-2\exp(-c\sqrt n)-2\mathbb{P}(G(\gamma,m)) \gr  1-2\exp(-c\sqrt n)-2^{-n^2+1},$$
and hence $\mathbb{P}(H(\gamma,m))\to 1 $ as $n\to\infty,$ uniformly for all integers $m$ satisfying $2n\ls m\ls \exp(\sqrt n).$

This completes the proof with $\gamma ={r_0}/{\sqrt{\ln (1+m/n)}}.$
It follows that
$$d_{{\rm BM}}(Y_{A_m(\tilde{\omega})},X_{B_m(\omega)})\gr \gamma^2 n \gr \frac{cn}{\ln(1+m/n)}$$
for an absolute constant $c>0$, with probability tending to $1$ as $n\to\infty$, uniformly for all integers $m$ satisfying $2n\ls m\ls \exp(\sqrt n)$. 
This proves the theorem.
\end{proof}

\bigskip 

\noindent \textbf{\S~4.2. Pure random polytope model.}  
We consider the pure random polytope model in which $x_1,\ldots,x_m$ are independent random vectors, defined on some probability space $\Omega$ and distributed according to an isotropic log-concave probability measure $\mu$ on $\mathbb{R}^n$ and $B_m=B_m(\omega)$ is the centrally symmetric random polytope
$$ B_m:=B_m(\omega)={\rm absconv}\{x_1(\omega),\ldots,x_m(\omega)\},\qquad \omega\in\Omega . $$
As in the basis-enriched model, we denote by $X_{B_m}$ the normed space whose unit ball is $B_m$. 

We also consider an independent copy of this construction and denote by $Y_{A_m}$ the normed space whose unit ball is $A_m={\rm absconv}\{y_1,\ldots,y_m\}$. 
We write $\widetilde{\Omega}$ for the probability space corresponding to the sample $(y_1,\ldots,y_m)$.

\smallskip 

Our goal is to prove the following theorem.

\begin{theorem}[log-concave Gluskin's theorem]\label{th:log-concave-gluskin-b}
There exists an absolute constant $c_0\gr 1$ such that if $c_0n\ls m\ls\exp(\sqrt{n})$ and $x_1,\ldots,x_m$ and $y_1,\ldots ,y_m$ are independent random vectors in $\R^n$ with isotropic log-concave distribution $\mu$ then the random polytopes 
$$B_m={\rm absconv}\{x_1,\ldots ,x_m\}\quad\text{and}\quad A_m={\rm absconv}\{y_1,\ldots ,y_m\}$$ 
satisfy
$$d_{{\rm BM}}(Y_{A_m},X_{B_m})\gr cn/\ln (1+m/n)$$
where $c>0$ is an absolute constant, with probability tending to $1$ as $n\to\infty$, uniformly in $m$.
\end{theorem}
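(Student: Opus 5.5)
We follow the scheme of the proof of Theorem~\ref{th:log-concave-gluskin-1}. The basis $e_1,\ldots,e_n$ is no longer available inside $A_m$, so we replace it by a subset of the random points themselves. The normalization is different here. By \eqref{eq:DGT}, \eqref{eq:zq-mu-inclusions} and \eqref{eq:euclidean-norm}, after discarding an event of probability at most $\exp(-c\sqrt n)+\exp(-c_0m)$, we have
$$c_1B_2^n\subseteq c_1Z_{\ln(1+m/n)}(\mu)\subseteq B_m\subseteq b\sqrt n\,B_2^n,$$
and the same holds for $A_m$. Call these events $\Omega_0$ and $\widetilde\Omega_0$. Here $m\gr c_0n$ is needed so that \eqref{eq:DGT} applies. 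The goal is again to show that $\mathbb P(G(\gamma,m))\ls 2^{-nm}$, where $G(\gamma,m)$ is the event that some $T\in SL_n$ satisfies $\|T:Y_{A_m}\to X_{B_m}\|\ls \gamma\sqrt{n}$ (suitably rescaled). The symmetrization and $SL_n$-normalization steps then go through verbatim.

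\textbf{Step 1: small-ball estimate for a fixed operator.} We prove an analogue of Proposition~\ref{prop:gluskin-lower-bound-4}. Fix $T\in SL_n$ and $\omega\in\Omega_0$. If $\|T:Y_{A_m}\to X_{B_m}\|\ls\lambda$, then $y_j\in\lambda T^{-1}(B_m)$ for every $j$. Lemma~\ref{lem:measure-vs-volume}, together with \eqref{eq:Bm-volume-upper} (or Lemma~\ref{lem:barany-furedi} with $r=b\sqrt n$), gives
$$\mathbb P(\cdot)\ls \left(C\lambda\sqrt{\ln(1+m/n)}/\sqrt n\right)^{mn}.$$
This is the natural scale: the threshold is $\lambda\approx \gamma\sqrt{n}$ with $\gamma\approx 1/\sqrt{\ln(1+m/n)}$. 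It is this estimate that forces the final bound $\gamma^2 n\approx n/\ln(1+m/n)$. The volume ratio $\vol(B_m)/\vol(B_2^n)$ is what compensates for the Euclidean diameter $b\sqrt n$.

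\textbf{Step 2: the net (main obstacle).} Since $e_i\notin A_m$, we cannot place $T$ inside $\sqrt n V^n_{B_m}$ directly. Instead we use the points $y_1,\ldots,y_n$ themselves: $\|T\|\ls\lambda$ forces $T(y_i)\in\lambda B_m$ for $i\ls n$. We then apply Proposition~\ref{prop:entropy-general} with $x_i=y_i$, $B=\lambda B_m(\omega)$ and $r=\lambda c_1$; note $\delta_0\gr 1/(b\sqrt n)$. This produces a net whose size depends on $\vol_n(\operatorname{absconv}\{y_1,\ldots,y_n\})^{-n}$.

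Two difficulties arise here. First, the net depends on $\tilde\omega$, which breaks the independence used in Step 1. To fix this, we condition on $y_1,\ldots,y_n$ and use only the remaining $m-n$ points $y_{n+1},\ldots,y_m$ in the probability estimate of Step 1. This costs exponent $(m-n)n$ instead of $mn$, which is harmless for $m\gr c_0n$.

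Second, we need a lower bound on $\vol_n(\operatorname{absconv}\{y_1,\ldots,y_n\})=2^n|\det(y_1,\ldots,y_n)|/n!$. We aim to show $|\det(y_1,\ldots,y_n)|^{1/n}\gr c\sqrt n$ with probability at least $1-e^{-cn}$. The determinant factorizes as a product of distances $\operatorname{dist}(y_k,\operatorname{span}\{y_1,\ldots,y_{k-1}\})$. Each such distance is $|P_{F^\perp}y_k|$ for an isotropic log-concave marginal on $F^\perp$ of dimension $n-k+1$, so the small-ball estimate \eqref{eq:small-ball} applied to that marginal controls it. Products of the small factors with $k$ close to $n$ need care: we bound $\mathbb E|\det|^{-s}$ for a small $s$ and use Markov, or we simply accept a loss of $C^n$ in the volume ratio. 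The outcome is that the volume ratio in Proposition~\ref{prop:entropy-general} is at most $(C\sqrt{\ln(1+m/n)})^{n}$ up to factors $C^n$. With $t$ chosen proportional to $\gamma$ (in the appropriate scaling, so that the approximation error $bt\sqrt n\cdot$(norm of identity) is at most $\gamma\sqrt n$), we get $|\mathcal N_t|\ls (C\sqrt{\ln(1+m/n)}/\gamma)^{n^2}$.

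\textbf{Step 3: conclusion.} We multiply the net size from Step 2 by the probability bound from Step 1 with exponent $(m-n)n$, and choose $\gamma=r_0/\sqrt{\ln(1+m/n)}$. Since $m\gr c_0n$, the quantity $(\ln(1+m/n))^{n/(m-n)}$ stays bounded, so taking $r_0$ small gives $\mathbb P(Q)\ls 2^{-(m-n)n}$. Adding the failure probability of the determinant event and of $\Omega_0$, the total exceptional probability tends to $0$ uniformly in $m$. This yields $d_{\rm BM}\gr \gamma^2n\gr cn/\ln(1+m/n)$.

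We expect the main obstacle to be the lower bound for $|\det(y_1,\ldots,y_n)|$ with exponentially small failure probability, because the last few projections are low-dimensional and the small-ball estimate is weak there.
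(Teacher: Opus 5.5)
Your proposal is correct, but it takes a different route from the paper. You keep the $SL_n$ normalization from the proof of Theorem~\ref{th:log-concave-gluskin-1}. For a fixed $T\in SL_n$ you use the full-dimensional bound $\mu(\lambda T^{-1}B_m)\ls (C\lambda\sqrt{\ln(1+m/n)}/\sqrt n)^n$. You replace $e_1,\ldots,e_n$ by the conditioned points $y_1,\ldots,y_n$, so the union bound is won by the smallness of $r_0$, exactly as in the basis-enriched case.

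The extra ingredient is the estimate $|\det(y_1,\ldots,y_n)|^{1/n}\gr c\sqrt n$, and your negative-moment route does prove it. By Lemma~\ref{lem:measure-vs-volume} and $L_\mu\ls C$, every isotropic log-concave $Z$ in $\R^d$ satisfies $\mathbb P(|Z|\ls r)\ls (Cr/\sqrt d)^d$. Hence $\mathbb E(\sqrt d/|Z|)^{1/2}\ls C'$ uniformly in $d\gr 1$. Iterating conditional expectations over the successive distances and applying Markov gives $|\det(y_1,\ldots,y_n)|\gr \sqrt{n!}\,(C'')^{-n}$ with probability at least $1-e^{-n}$. Your remark about ``accepting a loss of $C^n$'' does not replace this; the bound still has to be proved this way.

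Two smaller points. Take the net inside $\mathcal A_0=\mathcal A\cap SL_n$ (Proposition~\ref{prop:entropy-general} allows this), so that Step~1 applies to its elements. Also, you end up with $2^{-(m-n)n}+e^{-cn}$ rather than $2^{-nm}$, which is harmless.

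The paper avoids the random determinant altogether by normalizing with a singular value instead of the determinant. It works with operators satisfying $s_k(T)\gr 1$, $k=\lfloor n/2\rfloor$, and uses that $\lambda=1/s_k(T)$ makes both $\lambda T$ and $(\lambda T)^{-1}$ admissible. Its fixed-operator estimate (Proposition~\ref{prop:fixed-operator}) passes to a $k$-dimensional marginal and gives only $e^{-k}$ per point, independently of $\gamma$. The net is built from $n$ points extracted, by Carath\'eodory and pigeonhole, from the first $s=\lceil c_1n\rceil$ points, whose absolute convex hull contains $aB_2^n$. This yields a $\gamma$-free net of cardinality $(C_7(b/a)^2\sqrt{\ln(1+m/n)})^{n^2}$, which is beaten by $e^{-k(m-s)}$ once $m\gr c_0n$ with $c_0$ large.

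Your version stays closer to the basis-enriched proof and extracts more from each sample point, at the cost of the determinant lemma. The paper's version needs only the inclusion $aB_2^n\subseteq B_s$. Its class $\{s_k(T)\gr 1\}$, together with Proposition~\ref{prop:fixed-operator}, is also reused in Section~\ref{section:6} for mixing operators, where no determinant normalization is available.
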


We start with a few basic geometric properties of the random polytope $B_m$ that will be needed for the distance estimate.
Let $s=\lceil c_1 n\rceil $, where $c_1>0$ is a sufficiently large absolute constant such that $q:=\ln(1+s/n)\gr 2$. 
Then, since $\mu$ is isotropic, we have $B_2^n=Z_2(\mu)\subseteq Z_q(\mu)$ by \eqref{eq:zq-mu-inclusions}.  
Hence,  \eqref{eq:DGT} implies that if $x_1,\ldots,x_s$ are independent random vectors in $\R^n$ with distribution $\mu$, then
$${\rm absconv}\{x_1,\ldots ,x_s\}\supseteq a\,B_2^n$$
with probability greater than $1-\exp(-c_2 s)$, where $a,c_2>0$ are absolute constants.
After increasing the constant $c_0$ in the theorem if necessary, we may assume that $m\gr 2s$. 
Hence $\ln(1+(m-s)/n)\gr q$ and therefore the same argument gives
$${\rm absconv}\{x_{s+1},\ldots,x_m\}\supseteq a\,B_2^n$$
with probability greater than $1-\exp(-c_2(m-s))$.
On the other hand, \eqref{eq:euclidean-norm} shows that if $m\ls \exp(\sqrt{n})$ then $|x_i|\ls b\sqrt{n}$ for all $i=1,\ldots ,m$ with probability greater than $1-\exp(-c\sqrt{n})$.
We summarize these facts in the next proposition.

\begin{proposition}[geometry of $B_m$]\label{prop:good-event}
Let $2s=2\lceil c_1 n\rceil \ls m\ls \exp(\sqrt{n})$. 
If $x_1,\ldots,x_m$ are independent random vectors in $\R^n$ with isotropic log-concave distribution $\mu$, then
$$a\,B_2^n\subseteq {\rm absconv}\{x_1,\ldots ,x_s\}\cap {\rm absconv}\{x_{s+1},\ldots ,x_m\}$$
and
$$B_m\subseteq b\sqrt{n}\,B_2^n $$
with probability greater than $1-\exp(-c\sqrt n)$, where the constants $b\gr 1$ and $a,c,c_1>0$ are absolute.
\end{proposition}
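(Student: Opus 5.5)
The statement is a union bound over three events, each already controlled by earlier results. I will take $s=\lceil c_1n\rceil$ as in the discussion before the statement, with $c_1$ large enough that $q=\ln(1+s/n)\gr 2$.

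\emph{First event: $aB_2^n\subseteq {\rm absconv}\{x_1,\ldots,x_s\}$.} The vectors $x_1,\ldots,x_s$ are i.i.d.\ with law $\mu$, and $cn\ls s\ls e^n$, so \eqref{eq:DGT} applies. It gives
$${\rm absconv}\{x_1,\ldots,x_s\}\supseteq c\,Z_{q}(\mu)$$
with probability at least $1-\exp(-c's)$. Since $q\gr 2$, \eqref{eq:zq-mu-inclusions} together with isotropicity gives $Z_q(\mu)\supseteq Z_2(\mu)=B_2^n$. Hence the inclusion holds with $a=c$.

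\emph{Second event: $aB_2^n\subseteq {\rm absconv}\{x_{s+1},\ldots,x_m\}$.} Here there are $m-s$ i.i.d.\ points, and $m-s\gr s\gr cn$. For the upper limit, $m-s\ls \exp(\sqrt n)\ls e^n$. So \eqref{eq:DGT} applies again, now with parameter $\ln(1+(m-s)/n)\gr q\gr 2$. The same monotonicity of $Z_p$ yields the inclusion with probability at least $1-\exp(-c'(m-s))\gr 1-\exp(-c'n)$.

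\emph{Third event: $B_m\subseteq b\sqrt n\,B_2^n$.} Because $m\ls\exp(\sqrt n)$, \eqref{eq:euclidean-norm} gives $|x_i|\ls b\sqrt n$ for all $i\ls m$ with probability at least $1-\exp(-c\sqrt n)$. Then $B_m={\rm absconv}\{x_i\}\subseteq b\sqrt nB_2^n$, since the Euclidean ball is convex and symmetric.

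\emph{Union bound.} The total failure probability is at most
$$e^{-c's}+e^{-c'n}+e^{-c\sqrt n}\ls e^{-c''\sqrt n},$$
after adjusting constants. For small $n$ this is handled by shrinking $c''$, or it is trivial.

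The only point needing care is the parameter bookkeeping. We must check that the lower bound $m-s\gr cn$ required by \eqref{eq:DGT} holds; it follows from $m\gr 2s$ and $c_1$ large. We must also check that the constant $a$ can be taken the same in both applications; it can, because the constant $c_1$ in \eqref{eq:DGT} is absolute. There is no substantive obstacle.
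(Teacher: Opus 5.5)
Your proposal is correct and follows the paper's argument essentially step for step. Like the paper, you apply \eqref{eq:DGT} separately to the first $s$ points and to the remaining $m-s\gr s$ points, use $B_2^n=Z_2(\mu)\subseteq Z_q(\mu)$ with $q=\ln(1+s/n)\gr 2$, bound the Euclidean norms of all $x_i$ via \eqref{eq:euclidean-norm}, and conclude with a union bound.
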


Set $B_s:=B_s(x_1,\ldots ,x_s)={\rm absconv}\{x_1,\ldots ,x_s\}$ and $A_s:=A_s(y_1,\ldots ,y_s)={\rm absconv}\{y_1,\ldots ,y_s\}$. 
We define the ``good events"
$$\Omega_0=\left\{\omega\in\Omega: a\,B_2^n\subset B_s(\omega) \text{ and }\ |x_j(\omega)|\ls b\sqrt n,  1\ls j\ls m \right\}, $$
and
$$\widetilde\Omega_0= \left\{\tilde{\omega}\in\widetilde{\Omega}: a\,B_2^n\subset A_s(\tilde{\omega})  \text{ and } |y_j(\tilde{\omega})|\ls b\sqrt n,\ \ 1\ls j\ls m \right\}, $$
where $a,b>0$ are the constants from Proposition~\ref{prop:good-event}. 
Then,
\begin{equation}\label{eq:good-events}
\mathbb P(\Omega_0)\gr 1-\exp(-c\sqrt n), \qquad \mathbb{P}(\widetilde\Omega_0)\gr 1-\exp(-c\sqrt n).
\end{equation}

We shall also use the fact that every linear operator $T:\mathbb{R}^n\to\mathbb{R}^n$ admits a singular value decomposition: there exist orthonormal bases $\{\bar u_i(T)\}_{i=1}^n$ and $\{u_i(T)\}_{i=1}^n$ of $\mathbb{R}^n$ such that
\begin{equation}\label{eq:polar-decomposition}
T=\sum_{i=1}^ns_i(T)u_i(T)\otimes \bar u_i(T)
\end{equation}
where $s_1(T)\gr s_2(T)\gr\cdots \gr s_n(T)\gr 0$ are the singular values of $T$. 
Equivalently,
$$T(x)=\sum_{i=1}^n s_i(T)\,\langle x,\bar u_i(T)\rangle\,u_i(T),\qquad x\in\mathbb{R}^n.$$
This representation is not unique in general. 
Nevertheless, the sequence $\{s_i(T)\}_{i=1}^n$ is uniquely determined by $T$.

\begin{proposition}\label{prop:fixed-operator}
Let $B_0={\rm absconv}\{z_1,\ldots ,z_m\}\subseteq b\sqrt{n}B_2^n$ and let $\gamma>0$.
For any $n/4\ls k\ls n$ and every $T\in M_n(\mathbb{R})$ with $s_k(T)\gr 1$ we have
$$\mu\left(\{x\in\mathbb{R}^n:T(x)\in \gamma\sqrt{n}B_0\}\right)\ls e^{-k},$$
provided that
$$\gamma \ls \frac{1}{C_5b\sqrt{\ln (1+m/n)}},$$
where $C_5>0$ is an absolute constant.
\end{proposition}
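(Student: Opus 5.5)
Let $F$ be the span of the top $k$ right singular vectors $\bar u_1(T),\dots,\bar u_k(T)$, and let $E$ be the span of the corresponding left singular vectors $u_1(T),\dots,u_k(T)$. The plan is to project onto the $k$-dimensional subspace $F$ and reduce to a $k$-dimensional volume estimate for the marginal of $\mu$.

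\textbf{Reduction to the marginal.} If $T(x)\in\gamma\sqrt n B_0$, then applying $P_E$ gives
$$P_E T(x)=\sum_{i\ls k}s_i(T)\langle x,\bar u_i\rangle u_i=:T_k(P_F x)\in\gamma\sqrt n\,P_E(B_0).$$
Here $T_k:F\to E$ is invertible and satisfies $|T_k^{-1}y|\ls|y|$, since $s_i(T)\gr s_k(T)\gr1$ for $i\ls k$. It follows that
$$P_F x\in \gamma\sqrt n\,T_k^{-1}P_E(B_0)=:\gamma\sqrt n\,K.$$
Therefore the probability in question is at most $\pi_F(\mu)(\gamma\sqrt n K)$. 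The marginal $\pi_F(\mu)$ is an isotropic log-concave measure on $F\simeq\mathbb R^k$. By Lemma~\ref{lem:measure-vs-volume} this is at most $(C_1\gamma\sqrt n)^k\vol_k(K)$.

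\textbf{Volume of $K$.} We have $K=\mathrm{absconv}\{T_k^{-1}P_E z_j\}_{j\ls m}$. Each vertex satisfies $|T_k^{-1}P_Ez_j|\ls|z_j|\ls b\sqrt n$, so $K\subseteq b\sqrt n B_2^k$. Lemma~\ref{lem:barany-furedi}, applied in dimension $k$ with $m\gr k$ points and $r=b\sqrt n$, gives
$$\vol_k(K)\ls \Bigl(\frac{C_2b\sqrt n\sqrt{\ln(1+m/k)}}{\sqrt k}\Bigr)^k\vol_k(B_2^k)\ls\Bigl(\frac{C b\sqrt n\sqrt{\ln(1+m/k)}}{k}\Bigr)^k .$$
Since $k\gr n/4$, we have $n/k\ls4$ and $\ln(1+m/k)\ls\ln(1+4m/n)\ls 4\ln(1+m/n)$.

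\textbf{Conclusion.} Combining the two bounds,
$$\mu(\{x:T(x)\in\gamma\sqrt n B_0\})\ls\bigl(C'\gamma b\,(n/k)\sqrt{\ln(1+m/n)}\bigr)^k\ls\bigl(4C'\gamma b\sqrt{\ln(1+m/n)}\bigr)^k.$$
Choosing $C_5=4eC'$ makes the base at most $1/e$, which gives the bound $e^{-k}$.

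The main point to check is that the marginal is genuinely isotropic on $F$, so that Lemma~\ref{lem:measure-vs-volume} applies there with the bound $L\ls C$ from the solution of the slicing problem. This is the cited marginal fact. The rest is bookkeeping of the factors $\sqrt n$ and $\sqrt k$, where $k\gr n/4$ is exactly what makes $n/k$ bounded. I would also verify that the case $m<k$ cannot arise: it is excluded because $m\gr c_0n\gr n\gr k$.
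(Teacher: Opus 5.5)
Your proposal is correct and follows essentially the same route as the paper: you project onto the top-$k$ singular subspaces, pass to the isotropic $k$-dimensional marginal $\pi_F(\mu)$, apply Lemma~\ref{lem:measure-vs-volume}, and then apply Lemma~\ref{lem:barany-furedi} in dimension $k$, using $k\gr n/4$ to absorb $n/k$ and $\ln(1+m/k)$. The only cosmetic differences are these. You bound $K\subseteq b\sqrt n B_2^k$ directly through the contraction $T_k^{-1}$, whereas the paper uses $|\det(T|_F)|^{-1}\ls 1$ and applies the volume bound to $P_E(B_0)$. Also, the case $m<k$ is in fact trivial regardless of context: $P_E(B_0)$ is then lower-dimensional and so has $\pi_F(\mu)$-measure zero.
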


\begin{proof}
Let $ E={\rm span}\{u_i(T):1\ls i\ls k\}$ and $F={\rm span}\{\bar u_i(T):1\ls i\ls k\}$. 
Note that $T(F)=E$. 
It is clear that
$$\mu\left(\left\{x\in\mathbb R^n:T(x)\in \gamma \sqrt{n}\,B_0\right\}\right) \ls \mu\left(\left\{x\in\mathbb R^n:P_E(T(x))\in \gamma \sqrt{n}\,P_E(B_0)\right\}\right).$$
By the singular value decomposition \eqref{eq:polar-decomposition} we have $P_E(T(x))=T(P_F(x))$ for all $x\in\mathbb R^n$.
It follows that
\begin{align*}
\mu\left(\left\{x\in\mathbb R^n:P_E(T(x))\in \gamma \sqrt{n}\,P_E(B_0)\right\}\right)
&=\mu\left(\left\{x\in\mathbb R^n:T(P_F(x))\in \gamma\sqrt{n}\,P_E(B_0)\right\}\right)\\
&= \mu\left(\left\{x\in\mathbb R^n:P_F(x)\in \gamma \sqrt{n}\,(T|_F)^{-1}(P_E(B_0))\right\}\right).
\end{align*}
By definition of the marginal $\pi_F(\mu)$,
$$ \mu\left(\left\{x\in\mathbb R^n:P_Fx\in \gamma \sqrt{n}\,(T|_F)^{-1}(P_E(B_0))\right\}\right) = \pi_F(\mu)\big(\gamma \sqrt{n}\,(T|_F)^{-1}(P_E(B_0))\big). $$
Since $\mu$ is isotropic and log-concave, the marginal $\pi_F(\mu)$ is a $k$-dimensional isotropic log-concave probability measure. 
Therefore, by Lemma~\ref{lem:measure-vs-volume},
$$\pi_F(\mu)\big(\gamma \sqrt{n}\,(T|_F)^{-1}(P_E(B_0))\big) \ls (C_1\gamma\sqrt{n})^k\, \vol_k\big((T|_F)^{-1}(P_E(B_0))\big). $$
Since $s_k(T)\gr 1$, we have $\|(T|_F)^{-1}:E\to F\|_{{\rm op}}\ls 1$ and hence $ |\det(T|_F)|^{-1}\ls 1.$ 
Consequently,
$$ \vol_k\big((T|_F)^{-1}(P_E(B_0))\big) = |\det(T|_F)|^{-1}\vol_k(P_E(B_0)) \ls \vol_k(P_E(B_0)). $$
Combining the previous estimates, we obtain
$$ \mu\left(\left\{x:T(x)\in \gamma\sqrt{n}\,B_0\right\}\right) \ls (C_1\gamma\sqrt{n})^{k}\vol_k(P_E(B_0)).$$
It remains to estimate $\vol_k(P_E(B_0))$. 
We have assumed that $B_0\subseteq b\sqrt{n}\,B_2^n,$ and therefore $ P_E(B_0)\subseteq b\sqrt{n}\,B_E. $
Hence Lemma~\ref{lem:barany-furedi}, applied in the $k$-dimensional space $E$ gives
$$ \vol_k(P_E(B_0)) \ls \left(\frac{C_2b\sqrt{n}\sqrt{\ln(1+m/k)}}{\sqrt{k}}\right)^k\vol_k(B_E) \ls \left(C_1^{\prime}b\sqrt{n}\,\frac{\sqrt{\ln(1+m/k)}}{k}\right)^k,$$
where in the last inequality we used $\vol_k(B_E)^{1/k}\approx 1/\sqrt{k}$.
Therefore,
$$\mu\left(\left\{x:T(x)\in \gamma\sqrt{n}\,B_0\right\}\right) \ls \left( C_2^{\prime}b\gamma\,\frac{n}{k}\,\sqrt{\ln(1+m/k)} \right)^{k}. $$
Finally, since $k\gr n/4$, we have $m/k\ls 4m/n$ and therefore $\ln(1+m/k)\ls \ln(1+4m/n)\ls C_3^{\prime}\ln(1+m/n).$
Also, $n/k \ls 4$. 
Thus
$$\mu\left(\left\{x:T(x)\in \gamma\sqrt{n}\,B_0\right\}\right) \ls \left(C_4^{\prime}b\gamma\sqrt{\ln(1+m/n)}\right)^{k}\ls e^{-k} $$
provided that $\gamma\ls 1/(C_4^{\prime}eb\sqrt{\ln (1+m/n)})$.
This proves the proposition with $C_5=C_4^{\prime}e$.
\end{proof}

\begin{proof}[\textbf{Proof of Theorem~$\mathbf{\ref{th:log-concave-gluskin-b}}$}] 
Let $s=\lceil c_1n\rceil$, where $c_1$ is the absolute constant fixed above. 
After increasing the constant $c_0$ in the statement if necessary, we may assume that $m\gr 2s.$
Set
$$B_s(x):={\rm absconv}\{x_1,\ldots,x_s\}, \qquad A_s(y):={\rm absconv}\{y_1,\ldots,y_s\}. $$
We also write
$$ B_m(x):={\rm absconv}\{x_1,\ldots,x_m\}, \qquad A_m(y):={\rm absconv}\{y_1,\ldots,y_m\}. $$

Fix $\tilde\omega\in \widetilde\Omega_0$ and set $B_0:={\rm absconv}\left\{y_1(\tilde\omega),\ldots,y_m(\tilde\omega)\right\}$. 
Since $A_s(\tilde\omega)\subseteq B_0$ and $\tilde\omega\in \widetilde\Omega_0$, we have
\begin{equation}\label{eq:B0-contains-ball}
aB_2^n\subseteq B_0\subseteq b\sqrt{n}B_2^n.
\end{equation}
We fix
\begin{equation}\label{eq:gamma-choice}
\gamma:=\frac{1}{C_5b\sqrt{\ln(1+m/n)}},
\end{equation}
where $C_5>0$ is the absolute constant from Proposition~\ref{prop:fixed-operator}. 
Since $B_0\subseteq b\sqrt n\,B_2^n$, we have
\begin{equation}\label{eq:fixed-operator-bound}
\mu\left(\{x\in\mathbb{R}^n:T(x)\in \gamma \sqrt{n}B_0\}\right)\ls e^{-k}
\end{equation}
for every $n/4\ls k\ls n$ and every $T\in M_n(\mathbb{R})$ with $s_k(T)\gr 1$.

\medskip

\noindent \textbf{Step I. Estimate for a fixed operator.}
Let $k:=\lfloor n/2\rfloor$ and let $T\in M_n(\mathbb{R})$ satisfy $s_k(T)\gr 1.$ By \eqref{eq:fixed-operator-bound}, for every $j=s+1,\ldots,m$ we have
$\mathbb P\left(T(x_j)\in \gamma \sqrt n\,B_0\right)\ls \exp(-k).$
Since the vectors $x_{s+1},\ldots,x_m$ are independent, we obtain
\begin{equation}\label{eq:step-I}
\mathbb P\left(T(x_j)\in \gamma \sqrt n\,B_0,\ \ s+1\ls j\ls m\right) \ls \exp\big(-k(m-s)\big).
\end{equation}

\medskip

\noindent \textbf{Step II. A net in the set of operators.}
Fix a realization $\omega'$ of the first $s$ vectors and assume that
$$aB_2^n\subset B_s(x(\omega'))\qquad\text{and}\qquad |x_j(\omega')|\ls b\sqrt n,\ \ 1\ls j\ls s.$$
By Carath\'eodory's theorem,
$$B_s(x(\omega'))\subset \bigcup_{\sigma\subset [s], |\sigma|=n}{\rm absconv}\{x_j(\omega'):\ j\in \sigma\}. $$
Hence
$$\vol_n(B_s(x(\omega'))) \ls \binom{s}{n}\max_{\sigma\subset [s], |\sigma|=n}\vol_n({\rm absconv}\{x_j(\omega'):\ j\in \sigma\}). $$
Since $aB_2^n\subseteq B_s(x(\omega'))$ and $s=\lceil c_1n\rceil$, which implies that $\binom{s}{n}\ls\left(\frac{es}{n}\right)^n\ls C_6^n$ for an absolute constant $C_6>0$, 
there exists a set $\sigma_1\subset [s]$ with $|\sigma_1|=n$ such that
\begin{equation}\label{eq:sigma1}
\vol_n(B_2^n)\ls (C_6/a)^n\vol_n({\rm absconv}\{x_j(\omega'):\ j\in \sigma_1\}).
\end{equation}
In particular, the vectors $\{x_j(\omega'):\ j\in \sigma_1\}$ are linearly independent.

Define
$$ \mathcal A(\omega',\tilde{\omega}):= \left\{ T\in M_n(\mathbb{R}): T(x_j(\omega'))\in \gamma\sqrt n\,B_0\;  \text{for all}\; j\in \sigma_1 \right\}, $$
and
$$ \mathcal A_0(\omega',\tilde{\omega}):= \left\{ T\in M_n(\mathbb{R}): s_k(T)\gr 1 \;\;\text{and}\;\;T(x_j(\omega'))\in \gamma\sqrt n\,B_0  \;\text{for all}\; 1\ls j\ls s \right\}. $$
Then $\mathcal A_0(\omega',\tilde{\omega})\subset \mathcal A(\omega',\tilde{\omega}).$

We apply Proposition~\ref{prop:entropy-general} to the linearly independent vectors $x_j(\omega')$, $j\in \sigma_1$, with $B:=\gamma\sqrt n\,B_0$ and $r:=a\gamma\sqrt n.$
By \eqref{eq:B0-contains-ball}, we have $rB_2^n\subseteq B$.
Moreover, by the assumption on $\omega'$, we have $\max_{j\in\sigma_1}|x_j(\omega')|\ls b\sqrt n, $ and hence $\delta_0:=\big(\max_{j\in\sigma_1}|x_j(\omega')|\big)^{-1}\gr \frac{1}{b\sqrt n}.$
Therefore, $r\delta_0\gr a\gamma/b.$
Applying Proposition~\ref{prop:entropy-general} with $t=\frac{a\gamma}{2b},$ we obtain a $t$-net $\mathcal N(\omega',\tilde{\omega})\subset \mathcal A_0(\omega',\tilde{\omega})$ with respect to the operator norm such that
\begin{align}
|\mathcal N(\omega',\tilde{\omega})|
&\ls \left(\frac{2C_0b}{a\gamma}\right)^{n^2} \left( \frac{\vol_n(B)\,\vol_n(B_1^n)} {\vol_n(B_2^n)\,\vol_n(\operatorname{absconv}\{x_j(\omega'):j\in\sigma_1\})} \right)^n \label{eq:net1}\\
&\ls \left(\frac{2C_0C_6b}{a^2\gamma}\right)^{n^2} \left( \frac{\vol_n(B)\,\vol_n(B_1^n)} {\vol_n(B_2^n)^2} \right)^n, \nonumber
\end{align}
where in the last inequality we used \eqref{eq:sigma1}.

Recall that $\vol_n(B_2^n)^{1/n}\approx 1/\sqrt n$ and $\vol_n(B_1^n)^{1/n}\approx 1/n$.
By Lemma~\ref{lem:barany-furedi} and~\eqref{eq:B0-contains-ball},
$$ \vol_n(B) =(\gamma\sqrt n)^n\vol_n(B_0) \ls \big(C_2b\gamma\sqrt n\,\sqrt{\ln(1+m/n)}\big)^n\vol_n(B_2^n).$$
Substituting this estimate into \eqref{eq:net1}, we get
\begin{equation}\label{eq:net2}
|\mathcal N(\omega',\tilde{\omega})| \ls \left(C_7(b/a)^2\sqrt{\ln(1+m/n)}\right)^{n^2},
\end{equation}
where $C_7>0$ is an absolute constant.

\medskip

\noindent \textbf{Step III. Approximation argument.}
For fixed $\tilde{\omega}\in\widetilde\Omega_0$, define
$$\Theta_{\tilde{\omega}}:=\Bigl\{\omega\in \Omega_0:\ \exists\, T\in M_n(\mathbb{R})\ \text{with}\ s_k(T)\gr 1\ \text{and}\ \|T:X_{B_m(x(\omega))}\to Y_{A_m(y(\tilde{\omega}))}\|\ls \frac{\gamma\sqrt n}{2}\Bigr\}.$$
Fix an admissible realization $\omega'$ of the first $s$ vectors and let $\Theta_{\tilde{\omega}}(\omega')$ denote the fiber of $\Theta_{\tilde{\omega}}$ over this realization, namely
\begin{align*}
\Theta_{\tilde{\omega}}(\omega') := \Bigl\{ \omega\in \Omega_0:\ &x_j(\omega)=x_j(\omega')\ \text{for }1\ls j\ls s,\\ 
&\exists\, T\in M_n(\mathbb{R}) \text{with } s_k(T)\gr 1 \ \text{and }\ \|T:X_{B_m(x(\omega))}\to Y_{A_m(y(\tilde{\omega}))}\|\ls \frac{\gamma\sqrt n}{2} \Bigr\}.
\end{align*}
We estimate $\mathbb P(\Theta_{\tilde{\omega}}(\omega')\mid x_1,\ldots,x_s)$. Let $\mathcal N(\omega',\tilde{\omega})$ be the net constructed in Step II.
Suppose that $\omega\in \Theta_{\tilde{\omega}}(\omega')$ and choose $T$ satisfying the defining properties of $\Theta_{\tilde{\omega}}(\omega')$.
Then
$$ T(x_j(\omega))\in \frac{\gamma\sqrt n}{2}\,B_0, \qquad 1\ls j\ls m. $$
Since $x_j(\omega)=x_j(\omega')$ for $1\ls j\ls s$, it follows that $T(x_j(\omega'))\in \gamma\sqrt n\,B_0$ for all $1\ls j\ls s,$ and hence 
$T\in \mathcal A_0(\omega',\tilde{\omega})$. 

Choose $T_0\in \mathcal N(\omega',\tilde{\omega})$ such that $\|T-T_0\|_{\rm op}\ls \frac{a\gamma}{2b}$. 
Then, for every $s+1\ls j\ls m$,
\begin{align*}
\|T_0(x_j(\omega))\|_{B_0}
&\ls \|T(x_j(\omega))\|_{B_0}+\|(T_0-T)(x_j(\omega))\|_{B_0} \ls \frac{\gamma\sqrt n}{2}+\frac{|(T_0-T)(x_j(\omega))|}{a}\\
&\ls \frac{\gamma\sqrt n}{2}+\frac{\gamma}{2b}\,|x_j(\omega)| \ls \frac{\gamma\sqrt n}{2}+\frac{\gamma\sqrt n}{2}=\gamma\sqrt{n}.
\end{align*}
Therefore
\begin{align*}
\Theta_{\tilde{\omega}}(\omega') \subset \Bigl\{ \omega\in\Omega_0:\ &x_j(\omega)=x_j(\omega')\ \text{for }1\ls j\ls s,\\
&\exists\, S\in \mathcal N(\omega',\tilde{\omega}) \text{ such that } S(x_j(\omega))\in \gamma\sqrt n\,B_0,\ \ s+1\ls j\ls m \Bigr\}.
\end{align*}
Conditioning on the first $s$ vectors and using \eqref{eq:step-I} together with \eqref{eq:net2}, we obtain
\begin{align*}
\mathbb P\bigl(\Theta_{\tilde{\omega}}(\omega')\mid x_1,\ldots,x_s\bigr)
&\ls |\mathcal N(\omega',\tilde{\omega})|\, \exp\big(-k(m-s)\big)\\
&\ls \exp\!\left(n^2\ln\!\big(C_7(b/a)^2\sqrt{\ln(1+m/n)}\big)-k(m-s)\right).
\end{align*}

Recall that $m-s\gr \frac{m}{2}$ and $k=\lfloor n/2\rfloor\gr \frac{n}{3}.$
Moreover, if we assume that $m\gr c_0n$ for a large enough absolute constant $c_0>0$ then we have
$$\ln\!\big(C_7(b/a)^2\sqrt{\ln(1+m/n)}\big)\ls \frac{m}{12n}\ls \frac{m-s}{6n}.$$
Hence
$$ n^2\ln\!\big(C_7(b/a)^2\sqrt{\ln(1+m/n)}\big)-k(m-s) \ls \frac{1}{6}n(m-s)-\frac{1}{3}n(m-s) = -\frac{1}{6}n(m-s)\ls -c'n^2,$$
where $c'>0$ is an absolute constant.
Therefore, $\mathbb P\bigl(\Theta_{\tilde{\omega}}(\omega')\mid x_1,\ldots,x_s\bigr)\ls \exp(-c'n^2).$
Since this bound is uniform in every admissible realization $\omega'$, averaging over the first $s$ vectors gives
$$\mathbb P(\Theta_{\tilde{\omega}})\ls \exp(-c'n^2).$$
Integrating the last estimate over $\tilde{\omega}\in \widetilde\Omega_0$ and using \eqref{eq:good-events}, we get
\begin{equation}\label{eq:theta-complement}
\mathbb{P} \left(\left\{(\omega,\tilde{\omega})\in \Omega_0\times \widetilde\Omega_0:\ \omega\notin \Theta_{\tilde{\omega}}\right\}\right)
\gr 1-\exp(-c\sqrt n).
\end{equation}
By symmetry, the same estimate holds with the roles of $\omega$ and $\tilde{\omega}$ interchanged.
Consequently, for the set $\Xi$ of all pairs $(\omega,\tilde{\omega})\in \Omega_0\times \widetilde\Omega_0$ that satisfy
\begin{itemize}
\item[(i)] for every $T\in M_n(\mathbb{R})$ with $s_k(T)\gr1$, $\|T:X_{B_m(x(\omega))}\to Y_{A_m(y(\tilde{\omega}))}\|\gr \frac{\gamma\sqrt n}{2}$ and
\item[(ii)] for every $S\in M_n(\mathbb{R})$ with $s_k(S)\gr 1$, $\|S:Y_{A_m(y(\tilde{\omega}))}\to X_{B_m(x(\omega))}\|\gr \frac{\gamma\sqrt n}{2}$,
\end{itemize}
we see that
\begin{equation}\label{eq:final-good}
\mathbb{P}(\Xi)\gr 1-\exp(-c\sqrt n).
\end{equation}
Finally, fix $(\omega,\tilde{\omega})\in \Xi$ and let $T\in GL_n$. Set $\lambda:=\frac{1}{s_k(T)}.$
Then $ s_k(\lambda T)=1.$ Also, since $k=\lfloor n/2\rfloor$, we have $k\ls n-k+1$ and therefore $ s_k(T)\gr s_{n-k+1}(T). $
Hence $ s_k((\lambda T)^{-1}) = {s_k(T)}/{s_{n-k+1}(T)} \gr 1.$
It follows that
$$ \|\lambda T:X_{B_m(x(\omega))}\to Y_{A_m(y(\tilde{\omega}))}\| \gr \frac{\gamma\sqrt n}{2},
\quad \text{and} \quad
\|(\lambda T)^{-1}:Y_{A_m(y(\tilde{\omega}))}\to X_{B_m(x(\omega))}\| \gr \frac{\gamma\sqrt n}{2}. $$
Multiplying the two inequalities, we get
$$ \|T:X_{B_m(x(\omega))}\to Y_{A_m(y(\tilde{\omega}))}\|\, \|T^{-1}:Y_{A_m(y(\tilde{\omega}))}\to X_{B_m(x(\omega))}\| \gr \frac{\gamma^2 n}{4}. $$
Taking the infimum over all $T\in GL_n$, we conclude that
$$d_{{\rm BM}}(X_{B_m(x(\omega))},Y_{A_m(y(\tilde{\omega}))}) \gr \frac{\gamma^2 n}{4}\gr\frac{cn}{4\ln(1+m/n)}$$
for an absolute constant $c>0$, by our choice of $\gamma$ in \eqref{eq:gamma-choice}. By \eqref{eq:final-good},
$$\mathbb{P}\left(d_{{\rm BM}}(Y_{A_m},X_{B_m})\gr \frac{c\,n}{\ln(1+m/n)}\right)\gr 1-\exp(-c\sqrt n).$$
This proves the theorem. \end{proof}

\section{Distance to the class of unconditional spaces}\label{section:5}

Recall that an $n$-dimensional normed space $Y$ has a $1$-unconditional basis if there exists a basis $\{ e_1,\ldots ,e_n\}$ of $Y$ with the property
$$\Big\|\sum_{i=1}^n\lambda_ie_i\Big\|_Y =\Big\|\sum_{i=1}^n|\lambda_i|e_i\Big\|_Y$$
for every choice of real numbers $\lambda_1,\ldots ,\lambda_n$.

In this section we show that log-concave random spaces $X_{B_m}$ have the worst possible Banach--Mazur distance from the class $\mathcal{U}_n$ of $n$-dimensional normed spaces with a $1$-unconditional basis when $m\approx n$. 
We consider the pure random polytope model:
\begin{equation}\label{eq:unco-def-2}
B_m:={\rm absconv}\{x_1,\ldots, x_m\}
\end{equation}
where $x_1,\ldots,x_m$ are independent random vectors defined on some probability space $\Omega$,
which are distributed according to an isotropic log-concave probability measure $\mu$ on $\mathbb{R}^n$. 

The precise statement is the following.

\begin{theorem}\label{th:unco}
Let $\mu$ be an isotropic log-concave probability measure on $\mathbb{R}^n$. 
There exists an absolute constant $C>0$ such that if $0<\delta<1$ and $C\ln(\delta^{-1})n\ls m\ls\exp(\sqrt{n})$, then the random space $X_{B_m}$ defined by \eqref{eq:unco-def-2} satisfies with probability greater than $1-\delta $
$$d_{{\rm BM}}(X_{B_m},Y)\gr \frac{c\sqrt{n}}{\sqrt{\ln \left (1+m/n\right )}}$$
for every $n$-dimensional normed space $Y$ with a $1$-unconditional basis.
\end{theorem}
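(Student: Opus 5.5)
Since $Y$ is arbitrary in $\mathcal U_n$, we first put $Y$ in a normalized position using the symmetries of its unconditional basis. Let $\{f_i\}$ be a $1$-unconditional basis of $Y$. Identifying $Y$ with $(\mathbb R^n,\|\cdot\|_Y)$ with $f_i=e_i$, every diagonal sign operator $D_\varepsilon={\rm diag}(\varepsilon_1,\ldots,\varepsilon_n)$ is an isometry of $Y$. Given $T:X_{B_m}\to Y$, we have
$$\|D_\varepsilon T\|\,\|(D_\varepsilon T)^{-1}\|=\|T\|\,\|T^{-1}\|,$$
so the symmetries alone give nothing. The standard route (Szarek, Lata{\l}a--Mankiewicz--Oleszkiewicz--Tomczak-Jaegermann) goes through mixing operators instead. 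Set $S=T^{-1}D_\varepsilon T:X\to X$. Then
$$\|S:X\to X\|\ls \|T\|\,\|T^{-1}\|,$$
so it suffices to show two things. First, for every $T\in GL_n$ some choice of signs makes $S$ a $(k,\beta)$-mixing operator with $k\approx n/4$ and $\beta$ an absolute constant. Second, with probability $>1-\delta$, a random $X_{B_m}$ satisfies $\|S:X_{B_m}\to X_{B_m}\|\gr c\sqrt n/\sqrt{\ln(1+m/n)}$ for every such mixing $S$.

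For the first part I would take $\varepsilon=(1,\ldots,1,-1,\ldots,-1)$ with a block of $\lceil n/2\rceil$ ones, so that $D_\varepsilon=2P-I$ for a coordinate projection $P$ of rank about $n/2$. Then
$$S=2T^{-1}PT-I=2Q-I,$$
where $Q=T^{-1}PT$ is a (non-orthogonal) projection of rank about $n/2$. By the fact quoted in \S2.1 that projections of rank $k\ls n/2$ are $(k,1/2)$-mixing, applied to $Q$ or to $I-Q$, we get a $k$-dimensional subspace $E$ with $|P_{E^\perp}Qx|\gr |x|/2$ on $E$. Since $P_{E^\perp}x=0$ for $x\in E$, this gives
$$|P_{E^\perp}Sx|=2|P_{E^\perp}Qx|\gr |x|$$
on $E$. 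Hence $S\in{\rm Mix}_n(k,1)$ with $k$ about $n/2$ or slightly less; this is one place rank bookkeeping needs care. Alternatively one may invoke directly the bound announced in the introduction for $\|T:X_{B_m}\to X_{B_m}\|$, $T\in{\rm Mix}_n(k,1)$, $n/4\ls k\ls n/2$. Its stated hypothesis is $m\gr Cn^2\ln n$, however, so for $m\approx n$ I must reprove a version valid in the range $m\gr C\ln(\delta^{-1})n$.

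The second part is the main obstacle, and I would redo the argument of Theorem~\ref{th:log-concave-gluskin-b}. Fix $S$ mixing on $E$ and write $S'=P_{E^\perp}S|_E$, which has all singular values $\gr 1$ as a map $E\to E^\perp$. If $\|S:X_{B_m}\to X_{B_m}\|\ls\gamma\sqrt n$, then $Sx_j\in\gamma\sqrt n B_m$ for all $j$. Projecting onto $E^\perp$ and splitting $x_j=P_Ex_j+P_{E^\perp}x_j$, the marginal $\pi_E(\mu)$ must place $S'(P_Ex_j)$ inside $\gamma\sqrt n P_{E^\perp}(B_m)$ shifted by a term depending on $P_{E^\perp}x_j$. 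Conditioning on $P_{E^\perp}x_j$, the conditional density of $P_E x_j$ is bounded by $C^k$ times that of an isotropic marginal; this uses Bourgain's slicing, via $L_\mu\ls C$ for marginals and sections. A shift does not increase volume, so the argument of Proposition~\ref{prop:fixed-operator} together with Lemma~\ref{lem:barany-furedi} gives a per-point probability of $e^{-k}$ once $\gamma\ls c/(b\sqrt{\ln(1+m/n)})$.

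The difficulty is that $B_m$ itself is random and depends on the $x_j$. I would handle this as in the proof of Theorem~\ref{th:log-concave-gluskin-b}. Split off the first $s\approx n$ points, build the polytope from them, use Carath\'eodory and \eqref{eq:sigma1} to construct the net via Proposition~\ref{prop:entropy-general}, and use the remaining $m-s$ points for independence. The union bound then gives a failure probability of at most
$$\big(C\sqrt{\ln(1+m/n)}\big)^{n^2}e^{-k(m-s)}.$$
This is tiny once $m\gr Cn$. The probability $1-\delta$ comes from the good event $aB_2^n\subseteq B_s$, $|x_j|\ls b\sqrt n$, whose failure probability $e^{-cm}+e^{-c\sqrt n}$ is controlled by $m\gr C\ln(\delta^{-1})n$; I expect the $e^{-c\sqrt n}$ term to need a separate tail remark. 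Finally, combining $\|T\|\,\|T^{-1}\|\gr\|S\|$ with the lower bound on $\|S\|$ gives $d_{\rm BM}(X_{B_m},Y)\gr c\sqrt n/\sqrt{\ln(1+m/n)}$.
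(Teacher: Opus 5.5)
Your reduction is sound. Since $D_\varepsilon$ is an isometry of $Y$, the operator $S=T^{-1}D_\varepsilon T=2Q-I$ satisfies $\|S:X_{B_m}\to X_{B_m}\|\ls\|T\|\,\|T^{-1}\|$, and it belongs to ${\rm Mix}_n(\lfloor n/2\rfloor,1)$.

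The gap is the second step when $m$ is only proportional to $n$. The proof of Theorem~\ref{th:log-concave-gluskin-b} works because the target body $B_0=A_m(\tilde\omega)$ comes from an \emph{independent} sample. The net $\mathcal N(\omega',\tilde\omega)$ depends only on $x_1,\ldots,x_s$ and $B_0$. The events $T(x_j)\in\gamma\sqrt n\,B_0$, $s<j\ls m$, are independent with a fixed target, and this is what produces the factor $e^{-k(m-s)}$.

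For $S:X_{B_m}\to X_{B_m}$ the target is $B_m$ itself. It cannot be replaced by ${\rm absconv}\{x_1,\ldots,x_s\}$, which is smaller. And $B_m$ contains $x_j$ together with all the points $x_{s+1},\ldots,x_m$ that you want to use for independence. Consequently:
\begin{itemize}
\item a net built with target $\gamma\sqrt n\,B_m$ is not independent of those points;
\item the events $S(x_j)\in\gamma\sqrt n\,B_m$ are neither independent nor governed by a fixed body;
\item even a single such event is not a small-ball event for $x_j$, since $x_j\in B_m$. This is why Section~\ref{section:6} rewrites it as $S(x_j)-\gamma\sqrt n\lambda x_j\in 2\gamma\sqrt n\,B_m^{(j)}$.
\end{itemize}
Removing this dependence is exactly the role of the Szarek--Tomczak-Jaegermann decoupling in the paper. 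It only gives $\binom{m}{\ell}(C\sqrt n\,e^{-k})^{\ell}$ with $\ell=\lceil m/(2n+3)\rceil$, i.e.\ roughly $e^{-cm}$ rather than $e^{-cnm}$. Against a net of cardinality $(Cn)^{n^2}$ this forces $m\gr Cn^2\ln n$, which is the hypothesis of Theorem~\ref{th:LO-6.1}. So your bound $(C\sqrt{\ln(1+m/n)})^{n^2}e^{-k(m-s)}$ is unjustified.

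Note also that, by \eqref{eq:low-bc}, your intermediate claim would give ${\rm bc}(X_{B_m})\gr c\sqrt n$ already for $m\approx n$. That is a much stronger statement than Theorem~\ref{th:unco}, and the paper obtains it only for $m\gr Cn^2\ln n$ (Theorem~\ref{th:mixing-1}).

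A secondary point: the conditional density of $P_Ex_j$ given $P_{E^\perp}x_j$ is not bounded by $C^k$ uniformly over the fibres, e.g.\ near the boundary of the support. For a fixed target you should instead use $s_k(S)\gr 1$ and Proposition~\ref{prop:fixed-operator}.

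The missing idea is that you do not need to control any operator on $X_{B_m}$; the paper argues by volume.
\begin{itemize}
\item Lemma~\ref{lem:concentration} gives $\pi_1(I_n:X_{B_m}^{\ast}\to\ell_2^n)\ls C$. This is where $m\gr C\ln(\delta^{-1})n$ and the probability $1-\delta$ come from.
\item Ball's Hadamard/AM--GM argument (Lemma~\ref{lem:unco-2}) then shows that every parallelepiped contained in $B_m^{\circ}$ has volume at most $(C/n)^n$.
\item By Lozanovskii's theorem, the unit ball of a space with a $1$-unconditional basis contains a parallelepiped $Q$ with $\vol_n(B_Y)\ls e^n\vol_n(Q)$.
\end{itemize}
Hence $B_Y\subseteq B_m^{\circ}\subseteq dB_Y$ forces $\vol_n(B_m^{\circ})^{1/n}\ls Cd/n$. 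Comparison with \eqref{eq:Bm-polar-volume} gives $d\gr c\sqrt n/\sqrt{\ln(1+m/n)}$, and duality finishes the proof.

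As it stands, your route yields the theorem only for $m\gr Cn^2\ln n$, via Theorem~\ref{th:LO-6.1}, whose proof is uniform over ${\rm Mix}_n(k,1)$. It therefore misses the regime $m\approx n$, where the bound $c\sqrt n$ is sharp.
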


The proof is based on a series of lemmas. 
The first one is an analogue of Kashin's theorem \cite{Kashin-1977} in our context. 
However, our information on a random space $X_{B_m}$ allows us to obtain an estimate for the full range of values of $m$. 
A proof is given in \cite[Proposition~2.6]{Giannopoulos-VMilman-2000}.

\begin{lemma}\label{lem:concentration}
Let $\delta\in (0,1)$ and let $\mu$ be an isotropic log-concave probability measure on $\mathbb{R}^n$.
If $m\gr C\ln (\delta^{-1} )n$, then $m$ independent random points $x_1,\ldots ,x_m$ distributed according to $\mu$ satisfy with probability
greater than $1-\delta $ the inequality
$$c_1|y|\ls \frac{1}{m}\sum_{j=1}^m|\langle y,x_j\rangle |\ls c_2|y|$$
for all $y\in {\mathbb R}^n$, where $C,c_1,c_2>0$ are absolute constants.
\end{lemma}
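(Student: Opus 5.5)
The plan is to show that, with high probability, the empirical process $y\mapsto \frac1m\sum_j|\langle y,x_j\rangle|$ is uniformly comparable to $|y|$ over the whole sphere. The argument combines a Bernstein-type concentration bound for a fixed direction with a net argument. The bound depends only on one-dimensional marginals, and those are isotropic log-concave, so the constants come out universal.

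\textbf{Fixed direction.} Fix $y\in S^{n-1}$ and set $\xi_j=|\langle y,x_j\rangle|$. These are i.i.d. and $\mathbb{E}\xi_j^2=1$. By the reverse H\"older inequalities \eqref{eq:zq-mu-inclusions} applied with $p=1$, $q=2$, we get $\mathbb{E}\xi_j\approx 1$, specifically $c\ls \mathbb{E}\xi_j\ls 1$. Moreover $\|\xi_j\|_{\psi_1}\ls C$, because $h_{Z_q(\mu)}(y)\ls Cq$; this is again \eqref{eq:zq-mu-inclusions}, now for $q$ large. Bernstein's inequality for sub-exponential variables then gives, for a small absolute $\varepsilon$,
$$\mathbb{P}\Big(\Big|\frac1m\sum_{j=1}^m\xi_j-\mathbb{E}\xi_1\Big|>\varepsilon\Big)\ls 2\exp(-c\varepsilon^2 m).$$

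\textbf{Net and union bound.} Next take a $\theta$-net $\mathcal N$ of $S^{n-1}$ with $|\mathcal N|\ls(3/\theta)^n$. The union bound makes the two-sided estimate hold on $\mathcal N$ except with probability $2(3/\theta)^n e^{-c\varepsilon^2 m}$. This is at most $\delta/2$ once $m\gr C(n+\ln(1/\delta))$, and that is implied by $m\gr C\ln(\delta^{-1})n$ for $\delta$ bounded away from $1$. (For $\delta$ close to $1$, one can replace $\delta$ by $\min\{\delta,1/e\}$.)

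\textbf{Extending to the sphere.} Set $\Phi(y)=\frac1m\sum_j|\langle y,x_j\rangle|$; it is a norm. Let $M=\sup_{S^{n-1}}\Phi$. Writing $y=z+(y-z)$ with $z\in\mathcal N$ and $|y-z|\ls\theta$, we get $M\ls (1+\varepsilon)+\theta M$, so $M\ls (1+\varepsilon)/(1-\theta)$. For the lower bound, $\Phi(y)\gr\Phi(z)-\theta M\gr c-\varepsilon-\theta M$, which is at least $c/2$ for small absolute $\theta,\varepsilon$. Homogeneity then gives the statement for all $y\in\mathbb{R}^n$.

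\textbf{Main obstacle.} The delicate point is that the upper bound needs control on $M$, and this cannot use a crude bound such as $|x_j|\ls b\sqrt n$, since that would be too lossy. The self-bounding net trick above avoids this, because it needs only the net estimate. So the one genuinely probabilistic input is the uniform $\psi_1$ bound on one-dimensional marginals. That bound holds for every isotropic log-concave measure by Borell's lemma, and it does not require the slicing resolution.
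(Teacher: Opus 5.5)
Your argument is correct and is essentially the standard proof behind the result the paper invokes by citation only (\cite[Proposition~2.6]{Giannopoulos-VMilman-2000}): a Bernstein inequality for the uniformly $\psi_1$ one-dimensional marginals (Borell's lemma, no slicing needed), followed by a net argument with the self-bounding estimate for $\sup_{S^{n-1}}\Phi$. One correction: your parenthetical remedy for $\delta$ near $1$ does not work, because $m\gr C\ln(\delta^{-1})n$ then no longer forces $m\gr Cn$, and in fact the lemma as literally stated fails in that regime (if $m<n$, some $y\neq 0$ is orthogonal to all $x_j$), so the hypothesis has to be read as $m\gr C\max\{1,\ln(\delta^{-1})\}n$, which is exactly what your proof gives.
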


Recall (see \cite{Diestel-Jarchow-Tonge-book} for a comprehensive account of absolutely summing operators) that the $1$-summing norm $\pi_1(T:Y^{\ast }\to\ell_2^n)$ of an operator $T:Y^{\ast }\to\ell_2^n$ is the minimum of all positive constants $\alpha $ with the following property: for every $m\in {\mathbb N}$ and every choice of vectors $z_1,\ldots ,z_m\in Y^{\ast }$
$$\sum_{j=1}^m|T(z_j)|\ls \alpha\sup_{y\in B_Y}\sum_{j=1}^m|\langle y,z_j\rangle |.$$
We start with the next lemma.

\begin{lemma}\label{lem:unco-1}
Let $x_1,\ldots,x_m\in\mathbb{R}^n$ satisfy the conclusion of Lemma~\ref{lem:concentration} and consider the normed space $X_{B_m}$. 
Then $\pi_1(I_n:X_{B_m}^{\ast}\to\ell_2^n)\approx 1$.
\end{lemma}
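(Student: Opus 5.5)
The plan is to use the description $B_{X^\ast}=B_m^\circ$, so that $\|z\|_{X^\ast}=\max_{j\ls m}|\langle z,x_j\rangle|$. With this, both the upper and the lower bound for $\pi_1(I_n:X_{B_m}^\ast\to\ell_2^n)$ come from testing the defining inequality of the $1$-summing norm directly. The upper bound uses only the lower estimate of Lemma~\ref{lem:concentration}. The lower bound needs some control of the Euclidean norms $|x_j|$.

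\emph{Upper bound.} Let $z_1,\ldots,z_N\in\mathbb{R}^n$ be arbitrary. Applying the left-hand inequality of Lemma~\ref{lem:concentration} to each $z_i$ and summing over $i$ gives
$$\sum_{i=1}^N|z_i|\ls \frac{1}{c_1}\cdot\frac{1}{m}\sum_{j=1}^m\sum_{i=1}^N|\langle x_j,z_i\rangle|\ls \frac{1}{c_1}\max_{j\ls m}\sum_{i=1}^N|\langle x_j,z_i\rangle|.$$
Each $x_j$ lies in $B_m=B_{X_{B_m}}$, so the right-hand side is at most $c_1^{-1}\sup_{y\in B_{X_{B_m}}}\sum_{i}|\langle y,z_i\rangle|$. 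This is exactly the $1$-summing inequality with constant $\alpha=1/c_1$, so $\pi_1\ls 1/c_1$. In Pietsch language, the uniform probability measure on $\{x_1,\ldots,x_m\}\subset B_X$ is a Pietsch measure for $I_n$.

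\emph{Lower bound.} I would test the definition with $N=n$ and $z_i=e_i$, an orthonormal basis. The left-hand side is then $\sum_i|e_i|=n$. The right-hand side is $\pi_1\cdot\sup_{y\in B_m}\|y\|_1$. Since $\|\cdot\|_1$ is convex, this supremum is attained at a vertex, so it equals $\max_j\|x_j\|_1\ls\sqrt n\max_j|x_j|$. Hence
$$\pi_1\gr \frac{\sqrt n}{\max_j|x_j|}.$$
It therefore suffices to have $|x_j|\ls b\sqrt n$ for all $j$, which gives $\pi_1\gr 1/b$.

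\emph{Main obstacle.} The difficulty is this Euclidean upper bound on the $x_j$, since it is not literally contained in the conclusion of Lemma~\ref{lem:concentration}. I would add it to the hypotheses of the lemma by working on the event described in \eqref{eq:euclidean-norm}. That event has probability at least $1-\exp(-c\sqrt n)$ in the range $m\ls\exp(\sqrt n)$, which is the range of Theorem~\ref{th:unco}, so intersecting it with the event of Lemma~\ref{lem:concentration} costs only a negligible change in $\delta$.

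An alternative that avoids \eqref{eq:euclidean-norm} is to use only $\pi_1(T)\gr\|T\|$. This requires a vector $z$ with $\max_j|\langle z,x_j\rangle|\ls C|z|$, and the averaged upper bound $\frac1m\sum_j|\langle z,x_j\rangle|\ls c_2|z|$ does not guarantee a uniform bound over $j$. For that reason I would go with the norm-control route above.
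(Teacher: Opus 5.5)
Your proof is correct. The upper bound is exactly the paper's argument.

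For the lower bound you use different test vectors. The paper inserts the sample itself, $z_j=x_j$, into the $1$-summing inequality. It bounds the right-hand side using the upper half of Lemma~\ref{lem:concentration} together with $\sup_{y\in B_m}|y|\ls b\sqrt n$. It bounds the left-hand side from below by $\frac1m\sum_j|x_j|\gr\varepsilon_0\sqrt n$, which gives $\pi_1\gr\varepsilon_0/(c_2b)$.

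Your orthonormal basis is the concrete form of the ideal inequality $\sqrt n\ls\pi_1(\mathrm{id}_{\ell_2^n})\ls\pi_1(I_n:X_{B_m}^\ast\to\ell_2^n)\,\|I_n:\ell_2^n\to X_{B_m}^\ast\|$, where $\|I_n:\ell_2^n\to X_{B_m}^\ast\|=\max_j|x_j|$. It uses only the inclusion $B_m\subseteq b\sqrt n\,B_2^n$. So it needs neither the small-ball lower bound $|x_j|\gr\varepsilon_0\sqrt n$ nor the upper half of Lemma~\ref{lem:concentration}. It also gives the cleaner constant $1/b$, valid for any body contained in $b\sqrt n\,B_2^n$. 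The paper's version keeps everything phrased in terms of the sample, at the price of two-sided Euclidean control.

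You are right that Euclidean control is not contained in the conclusion of Lemma~\ref{lem:concentration} and must be imported. For example, adjoin the points $(m/\sqrt n)e_i$, $1\ls i\ls n$, to a good sample. This preserves that conclusion up to absolute constants but forces $\pi_1\ls n^{3/2}/m$. The paper imports \eqref{eq:euclidean-norm} inside its proof exactly as you propose.

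The only inaccuracy is your side remark that this costs a negligible change in $\delta$. The range $C\ln(\delta^{-1})n\ls m\ls\exp(\sqrt n)$ permits $\delta$ far smaller than $e^{-c\sqrt n}$. So the honest probability is $1-\delta-e^{-c\sqrt n}$; the paper's Theorem~\ref{th:unco} carries the same imprecision.
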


\begin{proof}
Let $z_1,\ldots ,z_N\in X_{B_m}^{\ast }$. 
Using Lemma~\ref{lem:concentration} we write
$$\sum_{j=1}^N|z_j|\ls\frac{1}{c_1}\cdot\frac{1}{m}\sum_{i=1}^m\sum_{j=1}^N|\langle x_i,z_j\rangle |
\ls\frac{1}{c_1}\cdot\sup_{y\in B_m}\sum_{j=1}^N|\langle y,z_j\rangle |.$$
This shows that $\pi_1(I_n)=\pi_1(I_n:X_{B_m}^{\ast}\to\ell_2^n)\ls c_1^{-1}$.

On the other hand, by the definition of $\pi_1(I_n)$ we also have
$$\frac{1}{m}\sum_{j=1}^m|x_j|\ls \pi_1(I_n)\sup_{y\in B_m}\frac{1}{m}\sum_{j=1}^m|\langle y,x_j\rangle |.$$
Then, the upper estimate from Lemma~\ref{lem:concentration} shows that
$$\sup_{y\in B_m}\frac{1}{m}\sum_{j=1}^m|\langle y,x_j\rangle |\ls c_2\sup_{y\in B_m}|y|.$$
From \eqref{eq:euclidean-norm} we know that with probability greater than $1-\exp(-c\sqrt{n})$ we have 
$$\varepsilon_0\sqrt{n}\ls |x_j|\ls b\sqrt{n} \qquad j=1,\ldots ,m$$ 
where $\varepsilon_0,b>0$ are absolute constants, so we also have $\sup_{y\in B_m}|y|\ls b\sqrt n$. 
Therefore we conclude that 
$$\varepsilon_0\sqrt{n}\ls \frac{1}{m}\sum_{j=1}^m|x_j|\ls c_2b\sqrt{n}\,\pi_1(I_n),$$
which completes the proof. 
\end{proof}

\smallskip

\begin{lemma}\label{lem:unco-2}
Let $Q$ be a parallelepiped contained in $B_m^{\circ }$. 
Then,
$$\vol_n(Q)^{1/n}\ls\frac{2\pi_1(I_n:X_{B_m}^{\ast }\to\ell_2^n)}{n}.$$
\end{lemma}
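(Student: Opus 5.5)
Write the parallelepiped as $Q=\{z:\ |\langle z,w_i\rangle|\ls 1,\ i=1,\ldots,n\}$, or better in vertex form $Q=\{\sum_{i=1}^n t_iv_i:\ |t_i|\ls 1\}$ with linearly independent $v_1,\ldots,v_n$. (If $Q$ is not centered at the origin, first replace it by its centered translate $\frac12(Q-Q)$. This body is contained in $B_m^{\circ}$ because $B_m^{\circ}$ is symmetric and convex, and it has the same volume.) Then
$$\vol_n(Q)=2^n|\det(v_1,\ldots,v_n)|.$$
The plan is to bound $|\det(v_1,\ldots,v_n)|$ by Hadamard's inequality, $|\det(v_1,\ldots,v_n)|\ls \prod_{i=1}^n|v_i|$. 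The arithmetic–geometric mean inequality then gives
$$\vol_n(Q)^{1/n}\ls 2\Big(\prod_{i=1}^n|v_i|\Big)^{1/n}\ls \frac{2}{n}\sum_{i=1}^n|v_i|.$$
This reduces the lemma to proving $\sum_{i=1}^n|v_i|\ls \pi_1(I_n:X_{B_m}^{\ast}\to\ell_2^n)$.

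To get this bound, I apply the definition of the $1$-summing norm to the vectors $z_i=v_i\in X_{B_m}^{\ast}$. Here $B_{X_{B_m}}=B_m$, so
$$\sum_{i=1}^n|v_i|\ls \pi_1(I_n)\,\sup_{y\in B_m}\sum_{i=1}^n|\langle y,v_i\rangle|.$$
It remains to show that the supremum is at most $1$. For fixed $y\in B_m$, choose signs $\varepsilon_i={\rm sign}\langle y,v_i\rangle$. Then
$$\sum_i|\langle y,v_i\rangle|=\Big\langle y,\sum_i\varepsilon_iv_i\Big\rangle.$$
The vector $\sum_i\varepsilon_iv_i$ is a vertex of $Q\subseteq B_m^{\circ}$, so $\langle y,\sum_i\varepsilon_iv_i\rangle\ls 1$ for all $y\in B_m$, by the definition of the polar. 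Combining the displays gives the claimed bound $\vol_n(Q)^{1/n}\ls 2\pi_1(I_n)/n$.

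The only step requiring care is the normalization: the parallelepiped must be written as $\{\sum t_iv_i:|t_i|\ls 1\}$, so that its vertices are exactly the sign combinations $\sum\varepsilon_iv_i$ and its volume is $2^n|\det|$. The factor $2$ in the statement then comes out correctly. The rest consists of Hadamard's inequality and the arithmetic–geometric mean inequality, which are routine.
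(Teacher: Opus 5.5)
Your proof is correct and is essentially the paper's argument. The paper writes $Q=S(B_\infty^n)$ and applies Hadamard and AM--GM to the $|S(e_i)|$. It then bounds $\sum_i|S(e_i)|$ by $\pi_1(S:\ell_\infty^n\to\ell_2^n)\ls\|S:\ell_\infty^n\to X_{B_m}^{\ast}\|\,\pi_1(I_n:X_{B_m}^{\ast}\to\ell_2^n)$, using $\|S:\ell_\infty^n\to X_{B_m}^{\ast}\|\ls 1$ because $S(B_\infty^n)=Q\subseteq B_m^{\circ}$; this is your direct use of the definition of $\pi_1(I_n)$ on $v_i=S(e_i)$ plus the vertex computation, and your centering step via $\frac12(Q-Q)$ covers a point the paper leaves implicit.
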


\begin{proof}This observation is due to Ball \cite{Ball-1991}. Let $S\in GL_n$ such that $S(B_{\infty }^n)=Q$.
Applying Hadamard's inequality, the arithmetic-geometric mean inequality and the definition of $\pi_1(S:\ell_{\infty }^n\to\ell_2^n)$, 
we get
\begin{align*}
\vol_n(Q)^{1/n} &= 2|\det S|^{1/n}\ls 2\left(\prod_{i=1}^n|S(e_i)|\right)^{1/n}\ls\frac{2}{n}\sum_{i=1}^n|S(e_i)|\\
&\ls \frac{2}{n}\pi_1(S:\ell_{\infty}^n\to\ell_2^n)\,\sup_{y\in B_1^n}\sum_{i=1}^n|\langle y,e_i\rangle| = \frac{2}{n}\pi_1(S:\ell_{\infty}^n\to\ell_2^n).
\end{align*}
Since
$$\pi_1(S:\ell_{\infty }^n\to\ell_2^n)\ls \| S:\ell_{\infty }^n\to X_{B_m}^{\ast }\|\, \pi_1(I_n:X_{B_m}^{\ast }\to\ell_2^n),$$
the result follows from the fact that $S(B_{\infty }^n)=Q\subseteq B_m^{\circ }$. 
\end{proof}

\smallskip

We can now complete the proof of Theorem~\ref{th:unco}. 

\begin{proof}[\textbf{Proof of Theorem~$\mathbf{\ref{th:unco}}$}] 
Let $Y$ be an $n$-dimensional normed space with a $1$-unconditional basis. 
We set $d=d_{{\rm BM}}(X_{B_m}^{\ast },Y)$ and may assume that 
$$dB_Y\supseteq B_m^{\circ }\supseteq B_Y.$$ 
Since $B_Y$ is a $1$-unconditional convex body, Losanovskii's theorem (see \cite[Chapter~3]{Pisier-book}) implies that there exists a parallelepiped $Q\subseteq B_Y$ such that $\vol_n(B_Y)/\vol_n(Q)\ls n^n/n!$.

From Lemmas~\ref{lem:unco-1} and \ref{lem:unco-2} we deduce that
$$\vol_n(B_m^{\circ })^{1/n}\ls d\,\vol_n(B_Y)^{1/n}\ls cd\,\vol_n(Q)^{1/n}\ls c_1d/n.$$
Then, \eqref{eq:Bm-polar-volume} implies that $d_{{\rm BM}}(X_{B_m}^{\ast },Y)=d\gr c_2\sqrt{n}/\sqrt{\ln (1+m/n)}$ for a random $B_m$, and the theorem follows by duality (recall that, in general, $d_{{\rm BM}}(X^{\ast},Y^{\ast})=d_{{\rm BM}}(X,Y)$ for any pair of $n$-dimensional normed spaces $X$ and $Y$).
\end{proof}

\begin{remark}\rm 
A special case of Theorem~\ref{th:unco} is that if $m\approx n$ then a random $B_m$ satisfies 
$$d_{{\rm BM}}(X_{B_m},Y)\gr c\sqrt{n}$$ for every $n$-dimensional normed space $Y$ with a 1-unconditional basis. 
In fact, this shows that if $m\approx n$ then a random $B_m$ satisfies
$$d_{{\rm BM}}(X_{B_m},\mathcal{U}_n)\approx \sqrt{n}$$
because $\ell_2^n\in\mathcal{U}_n$ and $d_{{\rm BM}}(X_{B_m},\ell_2^n)\ls\sqrt{n}$ by John's theorem.
\end{remark}

\section{Basis constant}\label{section:6}

Let $1\ls k\ls n/2$ and $\beta>0$. Recall that an operator $T\in M_n(\mathbb{R})$ is called $(k,\beta)$-mixing if there exists a subspace 
$E\subset \mathbb{R}^n$ with $\dim E\gr k$ such that
\begin{equation}\label{eq:mixing}
\operatorname{dist}(T(x),E)=|P_{E^{\perp}}T(x)|\gr\beta |x|
\end{equation}
for all $x\in E$. We denote by ${\rm Mix}_n(k,\beta)$ the set of all $(k,\beta)$-mixing operators in $M_n(\mathbb{R})$.

We shall use the following elementary properties of mixing operators (see~\cite{Mankiewicz-Tomczak-handbook}). 
An operator $T\in M_n(\mathbb{R})$ is $(k,1)$-mixing if and only if $T+\lambda I_n$ is $(k,1)$-mixing for every $\lambda\in\mathbb{R}$. 
Moreover, if $T\in {\rm Mix}_n(k,1)$ and $1\ls s\ls k$, then $T\in {\rm Mix}_n(s,1)$. 
Finally, if $T=2P$, where $P$ is a projection of rank $s$ (not necessarily orthogonal), then $T$ is $(\min\{s,n-s\},1)$-mixing.
Indeed, if $s\ls n/2$, then $P$ is $(s,1/2)$-mixing, hence $2P$ is $(s,1)$-mixing. If $s\gr n/2$, then $I_n-P$ has rank $n-s\ls n/2$ and is $(n-s,1/2)$-mixing, so $-2(I_n-P)$ is $(n-s,1)$-mixing, and therefore $2P=2I_n-2(I_n-P)$ is also $(n-s,1)$-mixing.

Consider the centrally symmetric random polytope
$$
B_m(\omega):={\rm absconv}\{x_1(\omega),\ldots, x_m(\omega)\}
$$
where $x_1,\ldots,x_m$ are independent random vectors on a probability space $\Omega$, 
distributed according to an isotropic log-concave probability measure $\mu$ on $\mathbb{R}^n$.
We denote by $X_{B_m}=X_{B_m(\omega)}$ the normed space whose unit ball is $B_m(\omega)$.

As in Section~\ref{section:4}, set $s=\lceil c_1n\rceil$, where $c_1>0$ is an absolute constant. 
Define
\begin{align}\label{eq:two-blocks}
\Omega_0 &:= \Big\{\omega\in\Omega:\ |x_j(\omega)|\ls b\sqrt{n}\;\;\hbox{for all}\;1\ls j\ls m\;\;\hbox{and}\;\; \\
\nonumber &\hspace*{2cm}a\,B_2^n\subseteq {\rm absconv}\{x_j(\omega)\}_{1\ls j\ls s}\cap {\rm absconv}\{x_j(\omega)\}_{s+1\ls j\ls m}\Big\}.
\end{align}
Then, for all $c_0n\ls m\ls \exp(\sqrt{n})$, where $c_0>0$ is a sufficiently large absolute constant (so that, in particular, $m\gr 2s$), we have
\begin{equation}\label{eq:omega-measure}
\mathbb{P}(\Omega_0)\gr  1-\exp(-c\sqrt{n}),
\end{equation}
for some absolute constants $a,b,c>0$. For every $1\ls j\ls m$, define
$$B_m^{(j)}(\omega)={\rm absconv}\{x_i(\omega):i\neq j\}.$$
Then $a\,B_2^n\subseteq B_m^{(j)}(\omega)$ for every $\omega\in\Omega_0$ and every $1\ls j\ls m$; indeed, if $j\ls s$ we use the second block in \eqref{eq:two-blocks}, while if $j>s$ we use the first one.

\smallskip 

Our aim is to prove the next theorem.

\begin{theorem}\label{th:mixing-1}
There exist absolute constants $C,c_1,c_2>0$ such that if $m\gr Cn^2(\ln n)$, then with probability greater than $1-\exp(-c_2\sqrt{n})$, we have
$$\|P:X_{B_m}\to X_{B_m}\|\gr\frac{c_1\sqrt{n}}{\sqrt{\ln(1+m/n)}}$$
for every projection $P:\mathbb{R}^n\to\mathbb{R}^n$ with rank $n/4\ls {\rm rank}(P)\ls 3n/4$. In particular, with the same probability, we have 
$${\rm bc}(X_{B_m})\gr c\sqrt{n}/\sqrt{\ln(1+m/n)}.$$
\end{theorem}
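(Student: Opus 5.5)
We reduce to mixing operators: by the remarks at the beginning of this section, if $P$ is a projection with $n/4\ls {\rm rank}(P)\ls 3n/4$, then $T=2P$ is $(k,1)$-mixing with $k=\min\{{\rm rank}(P),n-{\rm rank}(P)\}\gr n/4$. Since ${\rm Mix}_n(k,1)\subseteq{\rm Mix}_n(k',1)$ for $k'\ls k$, all such $T$ lie in ${\rm Mix}_n(k_0,1)$ with $k_0=\lceil n/4\rceil$. It therefore suffices to show that, with probability at least $1-\exp(-c_2\sqrt n)$,
$$\|T:X_{B_m}\to X_{B_m}\|\gr \gamma\sqrt n,\qquad \gamma=\frac{c}{b\sqrt{\ln(1+m/n)}},$$
for every $T\in{\rm Mix}_n(k_0,1)$. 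The basis constant bound then follows from \eqref{eq:low-bc}, using a projection of rank $\lfloor n/2\rfloor$.

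\textbf{Fixed operator and a single deleted vertex.} Fix $\omega\in\Omega_0$. If $\|T:X_{B_m}\to X_{B_m}\|\ls\gamma\sqrt n$, then $T(x_j)\in\gamma\sqrt n\,B_m$ for every $j$. Following Szarek's idea, we want to replace $B_m$ by $B_m^{(j)}$ so that $x_j$ is independent of the target body. Write $T(x_j)=\lambda x_j+w$ with $w\in\gamma\sqrt n\,B^{(j)}_m$ and $|\lambda|\ls\gamma\sqrt n$. We use the invariance of ${\rm Mix}_n(k,1)$ under $T\mapsto T-\lambda I_n$, applied with $\lambda$ in a finite net of $[-\gamma\sqrt n,\gamma\sqrt n]$ of cardinality ${\rm poly}(n)$, so that $(T-\lambda I_n)(x_j)\in 2\gamma\sqrt n\,B_m^{(j)}$ for a net value $\lambda$. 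For a fixed mixing operator $S$ with subspace $E$, $\dim E\gr k_0$, we then bound
$$\mu\{x: S(x)\in \gamma' \sqrt n\,K\}$$
for a body $K\subseteq b\sqrt n B_2^n$ that is an absolute convex hull of $m$ points. Here we project onto $E^\perp$ and condition on $P_{E^{\perp}}x$. On $E$ the map $x_E\mapsto P_{E^\perp}S x_E$ is injective with all singular values at least $1$, so the argument of Proposition~\ref{prop:fixed-operator} applies: marginals of $\mu$ onto $E$ are isotropic, hence have bounded density by Lemma~\ref{lem:measure-vs-volume} and Klartag--Lehec, and Lemma~\ref{lem:barany-furedi} bounds $\vol_k(P_{F}K)$ for a suitable $k$-dimensional image subspace $F$. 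The conditional fibre is a translate, which is harmless because the density bound is uniform over translates. This gives probability at most $e^{-k_0}$ once $\gamma'\ls c/(b\sqrt{\ln(1+m/n)})$.

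\textbf{Independence and the net.} We cannot simply take a product over all $j$, since $B_m^{(j)}$ depends on the other points. Following \cite{LMOTJ-2007}, we instead split the indices into a first block of $s$ points, which are used to build the net, and the remaining points. For each $j>s$ we condition on all points other than $x_j$, so the event for $x_j$ has conditional probability at most $e^{-k_0}$. To handle many $j$ simultaneously, we use a decoupling argument: for the event to hold for all $j$ at least requires it for a fixed-proportion set of indices. Alternatively, we note that a vertex $x_j$ is needed in a representation only for at most $n$ indices, by Carath\'eodory, so for all but $n$ indices $j$ one can replace $B_m$ by $B_m^{(J)}$ for a fixed removed set $J$ of size $\approx n$. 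Enumerating $\binom{m}{n}\ls \exp(n\ln m)$ choices of $J$ gives a bound of order
$$\exp(n\ln m)\,{\rm poly}(n)\,e^{-k_0(m-s-n)}.$$

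The operator net is constructed as in Step II of Theorem~\ref{th:log-concave-gluskin-b}, via Proposition~\ref{prop:entropy-general} using the first $s$ points and $aB_2^n\subseteq B_m^{(j)}$. It has cardinality $\exp(Cn^2\ln\ln(1+m/n))$ at scale $t\approx\gamma a/b$, and the approximation is carried out exactly as in Step III. The total failure probability is then at most
$$\exp\bigl(Cn^2\ln\ln m+n\ln m+C\ln n-c\,nm\bigr),$$
which is at most $\exp(-c_2\sqrt n)$, and in fact much smaller.

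\textbf{Main obstacle.} The hardest step is rigorously decoupling $x_j$ from $B_m$, that is, replacing $\gamma\sqrt n B_m$ by $B_m^{(j)}$ uniformly over a net of operators. The loss from the $\lambda$-discretization and the choice of removed sets must be controlled. This is where the condition $m\gr Cn^2\ln n$ should be needed: if one runs Szarek's argument with only one vertex removed at a time, the count of the union bound over $\lambda$ and operators must be beaten by roughly $m/n$ independent trials each contributing only $e^{-k}$. I would first try the version in which only the coefficient of $x_j$ itself is removed and the remaining vectors are handled by conditioning, as in \cite{LMOTJ-2007}.
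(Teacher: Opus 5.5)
There is a genuine gap in how you combine the $m$ events. Your reduction to ${\rm Mix}_n(\lceil n/4\rceil,1)$, the shift $T\mapsto T-\lambda I_n$ over a net of $\lambda$'s, and the single-vertex bound $e^{-k}$ conditional on $\{x_i:i\neq j\}$ all match the paper. The step that combines the events is not established, and the version you actually quantify is false. Carath\'eodory gives, for each $j$, a set $D_j\subseteq\{j\}^c$ with $|D_j|\ls n+1$ whose hull contains $(T-\lambda I_n)(x_j)/(2\gamma\sqrt n)$. It gives no bound on how many $j$'s use a given vertex. There is no fixed $J$ with $|J|\approx n$ whose removal makes the events for $j\notin J$ conditionally independent, since $B_m^{(J)}$ still contains every other $x_{j'}$. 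What the bound $|D_j|\ls n+1$ does give, via the Szarek--Tomczak-Jaegermann decoupling (Theorem~\ref{th:decoupling} with $d=n+1$), is only $\ell=\lceil m/(2n+3)\rceil$ effectively independent indices. So for a fixed mixing $T$ you get $\binom{m}{\ell}\big((1+2b\sqrt n/a)e^{-k}\big)^{\ell}\ls e^{-cm}$ (Proposition~\ref{prop:LO-6.2}), not $e^{-cnm}$. Also, $\lambda$ depends on $j$, so the $\lambda$-net costs a factor per index, not one ${\rm poly}(n)$. Your final exponent $Cn^2\ln\ln m+n\ln m-cnm$ is therefore wrong. The correct balance is net size $e^{Cn^2\ln n}$ against $e^{-cm}$, and that is exactly where $m\gr Cn^2\ln n$ is used. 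Your ``main obstacle'' paragraph guesses this, but the body of the proposal counts $m-s-n$ independent trials.

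The net also does not carry over from Steps II--III of Theorem~\ref{th:log-concave-gluskin-b}. There the constraint $T(x_j)\in\gamma\sqrt n\,B_0$, $j\in\sigma_1$, involves a body $B_0$ independent of the $x$'s. After conditioning on $x_1,\ldots,x_s$ the net is therefore frozen before $x_{s+1},\ldots,x_m$ are tested. Here the target is $B_m$ itself, which depends on $x_{s+1},\ldots,x_m$, so such a net is not independent of the points you test it on. Replacing $B_m$ by the deterministic bound $b\sqrt n\,B_2^n$ costs $e^{Cn^2\ln n}$, not $e^{Cn^2\ln\ln(1+m/n)}$. The paper avoids this entirely. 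On $\Omega_0$ one has $\|T:X_{B_m}\to X_{B_m}\|\gr \frac{a}{b\sqrt n}\|T\|_{\rm op}$, so only the deterministic set of mixing $T$ with $\|T\|_{\rm op}\ls (b/a)\gamma n$ matters. That set has a $\frac{a\gamma}{4b}$-net in operator norm of size $(Cn)^{n^2}$, and no two-block conditioning is needed. Approximation then uses $\|T-T_0:X_{B_m}\to X_{B_m}\|\ls (b/a)\sqrt n\,\|T-T_0\|_{\rm op}$. A smaller point: in your single-vertex estimate, conditioning on $P_{E^\perp}x$ does not give the marginal on $E$, because the conditional densities on fibres are not uniformly bounded. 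Instead note that $|Sx|\gr|x|$ on $E$ forces $s_k(S)\gr 1$ by min--max, and then Proposition~\ref{prop:fixed-operator}, which works with a genuine marginal on the singular subspace, applies directly.
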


Fix $A={\rm absconv}\{y_1,\ldots ,y_m\}\subseteq b\sqrt{n}\,B_2^n$  and let $T\in M_n(\mathbb{R})$. 
For every $\gamma>0$, define
$$W(m,\gamma,A,T):=\Big\{\omega\in\Omega_0:\ \|T:X_{B_m(\omega)}\to Y_{A}\|\ls \gamma\sqrt{n}\Big\}.$$
The next proposition provides an upper bound for the probability of these events.

\begin{proposition}\label{prop:measure-of-W}
Let $A\subseteq b\sqrt{n}\,B_2^n$ and let $n/4\ls k\ls n$. 
For every $T\in M_n(\mathbb{R})$ with $s_k(T)\gr 1$ we have
$$\mathbb{P}(W(m,\gamma,A,T))=\mathbb{P}\left(\{\omega\in\Omega_0:T(B_m(\omega))\subseteq \gamma\sqrt{n}A\}\right)\ls e^{-km},$$
provided that $\gamma\ls \left(C_5b\sqrt{\ln(1+m/n)}\right)^{-1}$, where $C_5>0$ is an absolute constant.
\end{proposition}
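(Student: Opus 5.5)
The plan is to reduce the statement to Proposition~\ref{prop:fixed-operator} together with independence of the $x_j$'s, exactly as in Step~I of the proof of Theorem~\ref{th:log-concave-gluskin-b}.

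First, we identify the event. Since $B_m(\omega)={\rm absconv}\{x_1(\omega),\ldots,x_m(\omega)\}$ and $\gamma\sqrt{n}A$ is convex and centrally symmetric, we have
$$\|T:X_{B_m(\omega)}\to Y_A\|\ls\gamma\sqrt{n}\iff T(B_m(\omega))\subseteq\gamma\sqrt{n}A\iff T(x_j(\omega))\in\gamma\sqrt{n}A\ \text{ for all } 1\ls j\ls m.$$
This gives the equality in the statement, and it shows that $W(m,\gamma,A,T)$ is contained in the event
$$\bigcap_{j=1}^m\{\omega\in\Omega:T(x_j(\omega))\in\gamma\sqrt nA\}.$$
Dropping the restriction to $\Omega_0$ only increases the probability.

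Next, we use independence. The $x_j$ are independent with law $\mu$, so
$$\mathbb{P}(W(m,\gamma,A,T))\ls\Big(\mu\big(\{x\in\mathbb{R}^n:T(x)\in\gamma\sqrt{n}A\}\big)\Big)^m.$$

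Then we apply Proposition~\ref{prop:fixed-operator} with $B_0=A$. That proposition requires $B_0={\rm absconv}\{z_1,\ldots,z_m\}\subseteq b\sqrt nB_2^n$, which holds for our $A={\rm absconv}\{y_1,\ldots,y_m\}$. It also requires $s_k(T)\gr1$ with $n/4\ls k\ls n$, and $\gamma\ls(C_5b\sqrt{\ln(1+m/n)})^{-1}$. Under these hypotheses each factor is at most $e^{-k}$, and so the product is at most $e^{-km}$. The number of generators of $A$ enters the proof of Proposition~\ref{prop:fixed-operator} only through Lemma~\ref{lem:barany-furedi}, where the count $m$ produces the factor $\ln(1+m/k)\ls C\ln(1+m/n)$, so the constant $C_5$ is the same.

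There is no real obstacle here. The only point to check is that restricting to $\Omega_0$ does not break the product structure, and it does not, because we simply bound the probability of the larger event defined on all of $\Omega$.
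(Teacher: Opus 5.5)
Your proposal is correct and follows the paper's proof essentially step for step. You reduce $T(B_m(\omega))\subseteq\gamma\sqrt{n}A$ to $T(x_j(\omega))\in\gamma\sqrt{n}A$ for all $j$, drop the restriction to $\Omega_0$, factor the probability using independence, and bound each factor by $e^{-k}$ via Proposition~\ref{prop:fixed-operator} with $B_0=A$.
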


\begin{proof}
Since $B_m(\omega)=\operatorname{absconv}\{x_1(\omega),\ldots,x_m(\omega)\}$, the inclusion $T(B_m(\omega))\subseteq \gamma\sqrt n\,A$ is equivalent to the assertion that $T(x_j(\omega))\in \gamma \sqrt{n}\,A$ for all $j=1,\ldots,m$. 
Therefore,
\begin{align*}
\mathbb{P}\left(\left\{\omega\in\Omega_0:T(B_m(\omega))\subseteq \gamma\sqrt n\,A\right\}\right)
&\ls \mathbb{P}\left(\left\{\omega:T(x_j(\omega))\in \gamma\sqrt n\,A\ \text{for all }j=1,\ldots,m\right\}\right)\\
&=\left(\mu\left(\left\{x\in\mathbb R^n:T(x)\in \gamma\sqrt n\,A\right\}\right)\right)^m,
\end{align*}
where we used the independence of the random vectors $x_1,\ldots,x_m$.

Applying Proposition~\ref{prop:fixed-operator} we obtain
$$\mu\left(\{x\in\mathbb{R}^n:T(x)\in \gamma\sqrt{n}A\}\right)\ls e^{-k},$$
provided that $\gamma \ls \left(C_5b\sqrt{\ln(1+m/n)}\right)^{-1},$ where $C_5>0$ is an absolute constant. Hence, if $\gamma$
satisfies this restriction, we get 
$$ \mathbb{P}\left(\left\{\omega\in\Omega_0:T(B_m(\omega))\subseteq \gamma\sqrt{n}\,A\right\}\right) \ls e^{-km}$$
as required.
\end{proof}

\begin{proposition}\label{prop:LO-6.2}
Let $m>4n$ and $1\ls k\ls n/2$ and set $\gamma =\left(2C_5b\sqrt{\ln (1+m/n)}\right)^{-1}$. 
For every $T\in {\rm Mix}_n(k,1)$ we have
$$\mathbb{P}\left(\{\omega\in\Omega_0:\|T:X_{B_m}\to X_{B_m}\|\ls\gamma\sqrt{n}\}\right) \ls \binom{m}{\ell}\left(\left(1+\frac{2b\sqrt{n}}{a}\right)\,e^{-k}\right)^{\ell},$$
where $\ell=\lceil m/(2n+3)\rceil$.
\end{proposition}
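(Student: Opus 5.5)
The plan is to reduce the event $\|T:X_{B_m}\to X_{B_m}\|\ls\gamma\sqrt n$ to a statement about $\ell$ of the vectors landing in a fixed body, with that body built from the remaining vectors. Then I condition on the remaining vectors and apply Proposition~\ref{prop:fixed-operator} to each of the $\ell$ vectors independently.

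\textbf{Step 1 (sparse representations).} Fix $\omega\in\Omega_0$ with $\|T:X_{B_m}\to X_{B_m}\|\ls\gamma\sqrt n$. Then $T(x_j)\in\gamma\sqrt n\,B_m$ for every $j$. By Carath\'eodory's theorem we can write
\[
T(x_j)=\sum_{i\in\sigma_j}\lambda_{ji}x_i,\qquad \sum_i|\lambda_{ji}|\ls\gamma\sqrt n,
\]
where $\sigma_j\subset[m]$ and $|\sigma_j|\ls n+1$. I isolate the diagonal coefficient $\lambda_j:=\lambda_{jj}$, so that
\[
(T-\lambda_jI_n)(x_j)\in\gamma\sqrt n\,{\rm absconv}\{x_i:i\in\sigma_j\setminus\{j\}\}.
\]

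\textbf{Step 2 (independent set).} Form the graph on $[m]$ with an edge $\{i,j\}$ whenever $i\in\sigma_j\setminus\{j\}$. Its average degree is at most $2(n+1)$, so Tur\'an's theorem gives an independent set $J$ with $|J|\gr m/(2n+3)$; shrink it to size exactly $\ell=\lceil m/(2n+3)\rceil$. For $j\in J$ we then have
\[
(T-\lambda_jI_n)(x_j)\in\gamma\sqrt n\,A_J,\qquad A_J:={\rm absconv}\{x_i:i\notin J\}.
\]
The body $A_J$ is independent of $\{x_j\}_{j\in J}$ and satisfies $A_J\subseteq b\sqrt n\,B_2^n$ on $\Omega_0$. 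Taking a union bound over the choices of $J$ produces the factor $\binom{m}{\ell}$.

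\textbf{Step 3 (discretizing the diagonal coefficients).} Each $\lambda_j$ lies in $[-\gamma\sqrt n,\gamma\sqrt n]$. I replace it by a point $\lambda_j'$ of a grid of mesh $a\gamma/b$, which has at most $1+2b\sqrt n/a$ points. The error is
\[
|(\lambda_j-\lambda'_j)x_j|\ls (a\gamma/b)\cdot b\sqrt n=a\gamma\sqrt n.
\]
As long as $aB_2^n\subseteq A_J$, this error lies in $\gamma\sqrt n\,A_J$, so $(T-\lambda_j'I_n)(x_j)\in 2\gamma\sqrt n\,A_J$. This doubling is exactly why $\gamma$ carries the factor $2$ compared with Proposition~\ref{prop:measure-of-W}.

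\textbf{Step 4 (fixed-operator bound and independence).} Since $T\in{\rm Mix}_n(k,1)$, the shifted operator $T-\lambda I_n$ is $(k,1)$-mixing for every $\lambda$. Hence $|(T-\lambda I_n)x|\gr|P_{E^\perp}(T-\lambda I_n)x|\gr|x|$ on a $k$-dimensional subspace $E$, which gives $s_k(T-\lambda I_n)\gr1$. Conditioning on $\{x_i\}_{i\notin J}$ and applying Proposition~\ref{prop:fixed-operator} with $2\gamma$ and $B_0=A_J$, each event $(T-\lambda'_jI_n)x_j\in 2\gamma\sqrt n A_J$ has probability at most $e^{-k}$. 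These events are independent over $j\in J$. Summing over the $(1+2b\sqrt n/a)^\ell$ choices of grid values and over $J$ yields the stated bound.

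I expect the main obstacle to be the requirement $aB_2^n\subseteq A_J$ in Step 3. The event $\Omega_0$ only guarantees a Euclidean ball inside each of the two blocks ${\rm absconv}\{x_j\}_{j\ls s}$ and ${\rm absconv}\{x_j\}_{j>s}$. I would try two remedies. The first is to keep the rounding error on the $x_j$-coordinate, since $B_m^{(j)}\supseteq aB_2^n$ holds for every $j$. The second is to pass to a sub-independent set lying inside a single block, so that $A_J$ contains the other, complete block. The second remedy costs only a constant factor in $\ell$, and I would absorb that loss by adjusting constants.
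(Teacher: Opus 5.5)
Your overall route is the paper's. The paper also rounds the coefficient of $x_j$ on an $a/(b\sqrt n)$-net $\Lambda$ of $[-1,1]$, uses Carath\'eodory with $d=n+1$, and extracts $\ell=\lceil m/(2n+3)\rceil$ indices whose events are conditionally independent. The one difference is that you prove the needed case of the Szarek--Tomczak-Jaegermann decoupling theorem (Theorem~\ref{th:decoupling}) by hand, via the Caro--Wei/Tur\'an bound and a union over $J$. That is fine, and it makes the conditional-independence bookkeeping explicit.

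However, Steps 1--3 in the order you wrote them do not work, exactly where you flag it. Once $J$ is fixed, the rounding error $a\gamma\sqrt n$ must be absorbed into $\gamma\sqrt n\,A_J$. On $\Omega_0$ nothing puts a ball inside $A_J$ when $J$ meets both blocks. Absorbing the error into $\gamma\sqrt n\,B_m^{(j)}$ at that stage is no better. The body $B_m^{(j)}$ involves $x_i$ for $i\in J\setminus\{j\}$, so the product structure over $j\in J$ in Step 4 is destroyed.

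The repair is to round \emph{before} sparsifying, as the paper does in \eqref{eq:LO-6.4}; this is what your first remedy has to mean.
On $\Omega_0$, $T(x_j)\in\gamma\sqrt n\,B_m$ gives $T(x_j)-\gamma\sqrt n\lambda x_j\in\gamma\sqrt n\,B_m^{(j)}$ for some $|\lambda|\le 1$. Moving $\lambda$ to the nearest point $\lambda'_j\in\Lambda$ costs a vector of length at most $a\gamma\sqrt n$. That vector lies in $\gamma\sqrt n\,aB_2^n\subseteq\gamma\sqrt n\,B_m^{(j)}$, so $(T-\gamma\sqrt n\lambda'_jI_n)(x_j)\in 2\gamma\sqrt n\,B_m^{(j)}$.
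Only now apply Carath\'eodory in ${\rm absconv}\{x_i:i\neq j\}$ to get $\sigma_j\subseteq[m]\setminus\{j\}$ with $|\sigma_j|\le n+1$. Your Steps 2 and 4 then go through verbatim with $2\gamma\sqrt n\,A_J$. They give exactly $\binom{m}{\ell}\bigl((1+2b\sqrt n/a)e^{-k}\bigr)^{\ell}$.

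Your second remedy, restricting $J$ to one block, is valid. But it only yields the bound with $\lceil\ell/2\rceil$ in place of $\ell$, which does not imply the stated one. Since $\ell$ is explicit in the statement, it does not prove Proposition~\ref{prop:LO-6.2} as written, though it would suffice for Theorem~\ref{th:LO-6.1} after changing constants.

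One further point, shared with the paper's proof: Proposition~\ref{prop:fixed-operator} is only available for $n/4\le k\le n$. So your Step 4 really covers $n/4\le k\le n/2$, which is the range used later.
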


\begin{proof}For every $j=1,\ldots ,m$ we define
\begin{align*}
A_j 
&:=\{\omega\in\Omega_0:T(x_j)\in \gamma\sqrt{n}B_m\}=\{\omega\in\Omega_0:\exists |\lambda|\ls 1\;\hbox{s.t.}\;T(x_j)-\gamma\sqrt{n}\lambda x_j\in \gamma\sqrt{n}(1-|\lambda|)B_m^{(j)}\}\\
&\subseteq \{\omega\in\Omega_0:\exists |\lambda|\ls 1\;\hbox{s.t.}\,T(x_j)-\gamma\sqrt{n}\lambda x_j\in\gamma\sqrt{n}B_m^{(j)}\}.
\end{align*}
Let $\Lambda$ be a $a/(b\sqrt{n})$-net of the interval $[-1,1]$ with $|\Lambda|\ls 1+\frac{2b\sqrt{n}}{a}$. 
Since $a\,B_2^n\subseteq B_m^{(j)}(\omega)$ for every $\omega\in\Omega_0$, we have
\begin{equation} \label{eq:LO-6.4}
A_j \subseteq \bigcup_{\lambda\in\Lambda}\{\omega\in \Omega_0:T(x_j)-\gamma\sqrt{n}\lambda x_j\in 2\gamma\sqrt{n}B_m^{(j)}\}  \subseteq \bigcup_{\lambda\in\Lambda}\tilde A_{j,\lambda},
\end{equation}
where
$$\tilde{A}_{j,\lambda}:=\{\omega\in\Omega_0^{(j)}:T(x_j)-\gamma\sqrt{n}\lambda x_j\in 2\gamma\sqrt{n}B_m^{(j)}\}.$$
and
$$ \Omega_0^{(j)}:=\{\omega\in\Omega:|x_i(\omega)|\ls b\sqrt{n}\ \text{for}\ i\neq j\}\supseteq \Omega_0$$
Fix $\lambda\in\Lambda$ and set $S:=T-\gamma\sqrt{n}\lambda\,I_n$. Since $T\in {\rm Mix}_n(k,1)$, after passing to a $k$-dimensional subspace if necessary,  there exists a $k$-dimensional subspace
$E$ of $\mathbb{R}^n$ such that $|P_{E^{\perp}}T(x)|\gr |x|$ for every $x\in E$. It follows that
\begin{equation}\label{eq:LO-6.5}
|S(x)|\gr |P_{E^{\perp}}S(x)|=|P_{E^{\perp}}T(x)|\gr |x| \qquad\text{for all }x\in E.
\end{equation}
Since $E\in G_{n,k}$, the min-max characterization of singular values yields $s_k(S)\gr 1$.

\smallskip 

Let $\mathcal{F}_{\{j\}^c}$ denote the $\sigma$-algebra generated by $\{x_i:i\neq j\}$. 
Write $f$ for the indicator function of $\tilde{A}_{j,\lambda}$ and let $h=\mathbb{E}(f|\mathcal{F}_{\{j\}^c})$ be the conditional expectation of $f$ with respect to $\mathcal{F}_{\{j\}^c}$. 
Note that $\Omega_0^{(j)}$ is $\mathcal{F}_{\{j\}^c}$-measurable and
$\tilde{A}_{j,\lambda}\subseteq \Omega_0^{(j)}$. Therefore,
$$h(\omega)\equiv 0\;\;\hbox{if}\;\;\omega\notin \Omega_0^{(j)}.$$

On the other hand, if $\omega\in\Omega_0^{(j)}$, then $B_m^{(j)}(\omega)\subseteq b\sqrt{n}\,B_2^n.$
Hence, using \eqref{eq:LO-6.5}, the independence of $x_j$ from $\mathcal{F}_{\{j\}^c}$ and Proposition~\ref{prop:fixed-operator}, we obtain
$$h(\omega) =\mu\left(\big\{x\in\mathbb{R}^n:S(x)\in 2\gamma\sqrt{n}\,B_m^{(j)}(\omega)\big\}\right)\ls e^{-k},$$
provided that $2\gamma\ls \left(C_5b\sqrt{\ln(1+m/n)}\right)^{-1}.$
It follows that
\begin{equation}\label{eq:LO-6.6}
h(\omega)=\mathbb{P}(\tilde{A}_{j,\lambda}\mid\mathcal{F}_{\{j\}^c})(\omega)\ls e^{-k} \qquad \text{for all }\omega\in\Omega.
\end{equation}
Since $\lambda\in\Lambda$ was arbitrary, we get
\begin{equation}\label{eq:LO-6.7}
\mathbb{P}(\tilde{A}_{j,\lambda}\mid\mathcal{F}_{\{j\}^c})\ls e^{-k} \qquad \text{for every }\lambda\in\Lambda.
\end{equation}
Setting $\tilde{A}_j=\bigcup_{\lambda\in\Lambda}\tilde{A}_{j,\lambda}$, it follows from \eqref{eq:LO-6.7} that
\begin{equation}\label{eq:LO-6.8}
\mathbb{P}(\tilde{A}_j\mid\mathcal{F}_{\{j\}^c}) \ls \sum_{\lambda\in\Lambda}\mathbb{P}(\tilde{A}_{j,\lambda}\mid\mathcal{F}_{\{j\}^c})
\ls \left(1+\frac{2b\sqrt{n}}{a}\right)\,e^{-k}.
\end{equation}

We shall use the next theorem of Szarek and Tomczak-Jaegermann from \cite{Szarek-Tomczak-2007}, which provides a ``small ball" estimate for the
probability of the intersection of weakly dependent events.

\begin{theorem}\label{th:decoupling}Let $d,m\in\mathbb{N}$. Consider a family of events $\{\Theta_{j,D}:j\in [m], D\subseteq [m]\}$ such that
$$\Theta_{j,D}\subseteq \bigcup_{\substack{D^{\prime}\subseteq D\\ |D^{\prime}|\ls d}} \Theta_{j,D^{\prime}}.$$
For every $j\in [m]$ set $\Theta_j:=\Theta_{j,\{j\}^c}$ and for every $r\in [m]$ set $\mathcal{J}_r=\{J\subseteq [m]:|J|=r\}$.
Let $\{\mathcal{F}_D:D\subseteq [m]\}$ be a nested family of $\sigma$-algebras, i.e. $D^{\prime}\subseteq D$ implies $\mathcal{F}_{D^{\prime}}\subseteq \mathcal{F}_D$. Assume
that for any $I,J\subseteq [m]$ with $I\cap J=\varnothing$ the events $\{\Theta_{j,I}:j\in J\}$ are $\mathcal{F}_I$-conditionally independent
and that $\mathbb{P}(\Theta_j\mid \mathcal{F}_{\{j\}^c})\ls p_j$ for every $j\in [m]$. Then, for every $r\ls \lceil m/(2d+1)\rceil$, we have
$$\mathbb{P}\left(\bigcap_{j=1}^m\Theta_j\right)\ls \sum_{J\in \mathcal{J}_r}\prod_{j\in J}p_j.$$
\end{theorem}

With the convention $\operatorname{absconv}(\varnothing)=\{0\}$, we apply Theorem~\ref{th:decoupling} as follows. For every $1\ls j\ls m$ and every $D\subseteq [m]$ we consider the event
\begin{align*}
\Theta_{j,D} =  \bigcup_{\lambda\in\Lambda}  \Bigl\{ \omega\in\Omega &: |x_i(\omega)| \ls b\sqrt{n} \text{ for all }i\in D, \\ 
&\text{and }  T(x_j(\omega))-\gamma\sqrt{n}\lambda x_j(\omega)\in 2\gamma\sqrt{n}\,\operatorname{absconv}\{x_i(\omega):i\in D\} \Bigr\}.
\end{align*}
Carath\'eodory's theorem, applied to the set $\{0,\pm x_i:i\in D\}$, implies that, for every $D\subseteq [m]$,
$$ \Theta_{j,D}\subseteq \bigcup_{\substack{D^{\prime}\subseteq D\\ |D^{\prime}|\ls n+1}}\Theta_{j,D^{\prime}}.$$
For every $J\subseteq [m]$ we denote by $\mathcal{F}_J$ the $\sigma$-algebra generated by $\{x_i:i\in J\}$. 
It is clear that if $I,J\subseteq [m]$ and $I\cap J=\varnothing$, then the events $\{\Theta_{i,I}:i\in J\}$ are $\mathcal{F}_I$-conditionally independent.

Also, by \eqref{eq:LO-6.4} and the definition of $\tilde{A}_j$, we have
\begin{equation}\label{eq:LO-9}
A_j\subseteq \Theta_{j,\{j\}^c}= \tilde{A}_j
\end{equation}
for every $j\in [m]$. 
Therefore, \eqref{eq:LO-6.8} yields
\begin{equation}\label{eq:LO-10}
\mathbb{P}(\Theta_{j,\{j\}^c}\mid \mathcal{F}_{\{j\}^c}) = \mathbb{P}(\tilde{A}_j\mid \mathcal{F}_{\{j\}^c}) \ls \left(1+\frac{2b\sqrt{n}}{a}\right)\,e^{-k}.
\end{equation}
Applying Theorem~\ref{th:decoupling} with $d=n+1$, $p_j=\left(1+\frac{2b\sqrt{n}}{a}\right) e^{-k}$ and $\ell=\left\lceil \frac{m}{2n+3}\right\rceil,$ and using \eqref{eq:LO-9} and \eqref{eq:LO-10}, we obtain
\begin{equation}\label{eq:LO-11}
\mathbb{P}\left(\bigcap_{j=1}^mA_j\right) \ls \mathbb{P}\left(\bigcap_{j=1}^m\Theta_{j,\{j\}^c}\right) \ls \binom{m}{\ell}\left(\left(1+\frac{2b\sqrt{n}}{a}\right)\,e^{-k}\right)^{\ell}.
\end{equation}
Since
$$\{\omega\in\Omega_0:\|T:X_{B_m(\omega)}\to X_{B_m(\omega)}\|\ls\gamma\sqrt{n}\}=\bigcap_{j=1}^mA_j,$$
the proof is complete.
\end{proof}

\begin{theorem}\label{th:LO-6.1}
Let $c_0n^2(\ln n)\ls m\ls\exp(\sqrt{n})$, where $c_0>0$ is an absolute constant. For every $n/4\ls k\ls n/2$ and $T\in {\rm Mix}_n(k,1)$ we have
\begin{equation}\label{eq:LO-1}\|T:X_{B_m(\omega)}\to X_{B_m(\omega)}\|\gr \frac{c\sqrt{n}}{\sqrt{\ln(1+m/n)}}\end{equation}
with probability at least $1-\exp(-c'\sqrt{n})$, where $c,c'>0$ are absolute constants. 
\end{theorem}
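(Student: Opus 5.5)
The plan is to prove \eqref{eq:LO-1} \emph{simultaneously} for all $T\in{\rm Mix}_n(k,1)$; this uniform version is what Theorem~\ref{th:mixing-1} needs. I will combine the fixed-operator estimate of Proposition~\ref{prop:LO-6.2} with a union bound over a net of operators. The hypothesis $m\gr c_0n^2\ln n$ is meant to let the probability bound, which is of order $\exp(-c\,k\,m/n)$, beat the entropy of the net, which is of order $\exp(Cn^2\ln n)$.

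\emph{Step 1 (fixed operator).} Let $\gamma$ be as in Proposition~\ref{prop:LO-6.2}, and let $T\in{\rm Mix}_n(k,1)$ with $n/4\ls k\ls n/2$. Set $\ell=\lceil m/(2n+3)\rceil$, so that $\binom{m}{\ell}\ls (em/\ell)^\ell\ls (Cn)^\ell$. Proposition~\ref{prop:LO-6.2} then gives
$$\mathbb P\big(\{\omega\in\Omega_0:\|T:X_{B_m}\to X_{B_m}\|\ls\gamma\sqrt n\}\big)\ls \big(C n^{3/2}e^{-k}\big)^{\ell}\ls e^{-k\ell/2}\ls \exp(-c\,m)$$
for $n$ large. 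The same bound holds, up to absolute constants, for operators in ${\rm Mix}_n(k,1/2)$. For this I apply the proposition to $2T_0$ and use a threshold $\gamma/2$ in place of $\gamma$; this only costs absolute constants inside $\gamma$.

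\emph{Step 2 (reduction to a bounded set and a net).} Work on $\Omega_0$, where $aB_2^n\subseteq B_m\subseteq b\sqrt n B_2^n$. If $\|T:X_{B_m}\to X_{B_m}\|\ls\gamma\sqrt n$, then
$$|Tx|\ls b\sqrt n\,\|Tx\|_{B_m}\ls b\gamma n\,\|x\|_{B_m}\ls (b\gamma n/a)|x|,$$
so $\|T\|_{\rm op}\ls Cn$. Let $\mathcal N$ be an $\varepsilon$-net, with respect to $\|\cdot\|_{\rm op}$, of ${\rm Mix}_n(k,1)\cap CnV_{\rm op}^n$, chosen inside this set and with $\varepsilon=\min\{1/2,\,a\gamma/(2b)\}$. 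By Lemma~\ref{lem:standard-net}, $|\mathcal N|\ls (3Cn/\varepsilon)^{n^2}\ls \exp(C'n^2\ln n)$, because $1/\gamma\ls C\sqrt{\ln(1+m/n)}\ls C\sqrt n$ when $m\ls e^{\sqrt n}$.

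Now suppose $T$ is a bad operator, i.e. $\|T:X_{B_m}\to X_{B_m}\|\ls\gamma\sqrt n/2$, and choose $T_0\in\mathcal N$ with $\|T-T_0\|_{\rm op}\ls\varepsilon$. Then
$$\|(T-T_0)x\|_{B_m}\ls \varepsilon|x|/a\ls \varepsilon b\sqrt n\,\|x\|_{B_m}/a\ls \gamma\sqrt n\,\|x\|_{B_m}/2,$$
so $\|T_0:X_{B_m}\to X_{B_m}\|\ls\gamma\sqrt n$. Moreover, on the mixing subspace $E$ of $T$ we have $|P_{E^\perp}T_0x|\gr(1-\varepsilon)|x|\gr |x|/2$, so $T_0\in{\rm Mix}_n(k,1/2)$. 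Alternatively, since $\mathcal N$ already lies in ${\rm Mix}_n(k,1)$, Step 1 applies to $T_0$ directly.

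\emph{Step 3 (union bound).} Combining the two steps,
$$\mathbb P\big(\exists\,T\in{\rm Mix}_n(k,1):\|T\|_{X_{B_m}\to X_{B_m}}\ls \gamma\sqrt n/2\big)\ls \mathbb P(\Omega_0^c)+\exp\big(C'n^2\ln n-k\ell/2\big).$$
Since $k\gr n/4$ and $\ell\gr m/(3n)\gr c_0 n\ln n/3$, we get $k\ell/2\gr c_0n^2\ln n/24$. Choosing $c_0$ large makes the exponent at most $-n^2$, and \eqref{eq:omega-measure} then gives total failure probability $\ls e^{-c'\sqrt n}$. Finally, $\gamma\sqrt n/2\approx \sqrt n/\sqrt{\ln(1+m/n)}$, which is the bound in \eqref{eq:LO-1}.

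The main obstacle is the bookkeeping in Step 2. The net must consist of operators to which Proposition~\ref{prop:LO-6.2} still applies, which requires keeping the mixing property with constant $\gr 1/2$ after perturbation, and its cardinality must be only $\exp(O(n^2\ln n))$. That in turn needs both the a priori operator-norm bound $\|T\|_{\rm op}\ls Cn$ coming from $\Omega_0$ and the lower bound $\gamma\gr c/\sqrt n$, so that the entropy is absorbed exactly at the threshold $m\approx n^2\ln n$.
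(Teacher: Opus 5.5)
Your proposal is correct and is essentially the paper's proof. On $\Omega_0$ you reduce to operators with $\|T\|_{\rm op}\ls Cn$, take an operator-norm net chosen inside ${\rm Mix}_n(k,1)$ (so Proposition~\ref{prop:LO-6.2} applies to the net points directly and the ${\rm Mix}_n(k,1/2)$ detour is not needed), and conclude with the same perturbation estimate and union bound; the only cosmetic difference is that the paper uses radius $(b/a)\gamma n$ so that $\gamma$ cancels in the net size $(C_8(b/a)^2n)^{n^2}$, whereas you absorb $1/\gamma\ls C\sqrt{n}$ into an $\exp(C'n^2\ln n)$ bound, which works equally well.
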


\begin{proof}
Let $\Omega_0$ be defined as in \eqref{eq:two-blocks}. 
We also fix $\gamma =\left(2C_5b\sqrt{\ln (1+m/n)}\right)^{-1}$ so that Proposition~\ref{prop:LO-6.2} holds.  For every $\omega\in\Omega_0$ we have $a\,B_2^n\subseteq B_m(\omega)\subseteq b\sqrt{n}B_2^n$ and hence
\begin{equation}\label{eq:LO-12}\|T:X_{B_m(\omega)}\to X_{B_m(\omega)}\|\ls (b/a)\sqrt{n}\,\|T:\ell_2^n\to\ell_2^n\|\end{equation}
for every $T\in M_n(\mathbb{R})$. Similarly,
\begin{equation}\label{eq:LO-13}\|T:X_{B_m(\omega)}\to X_{B_m(\omega)}\|\gr \frac{a}{b\sqrt{n}}\,\|T:\ell_2^n\to\ell_2^n\|.\end{equation}
From \eqref{eq:LO-13} we see that if $T\in M_n(\mathbb{R})$ satisfies $\|T:\ell_2^n\to\ell_2^n\|\gr (b/a)\gamma n$ then 
$\|T:X_{B_m(\omega)}\to X_{B_m(\omega)}\|\gr \gamma\sqrt{n}$ for every $\omega\in\Omega_0$. 
Therefore, in order to prove the theorem it suffices to consider only operators $T\in M_n(\mathbb{R})$ with $\|T:\ell_2^n\to\ell_2^n\|\ls (b/a)\gamma n$.
We define
$$\mathcal{A}:=\{T\in {\rm Mix}_n(k,1):\|T:\ell_2^n\to\ell_2^n\|\ls (b/a)\gamma n\}.$$
Set $\delta\ls a\gamma/(2b)$ (for instance $\delta=a\gamma/(4b)$).
Then a standard argument in the spirit of Lemma~\ref{lem:standard-net} shows that there exists a $\delta$-net $\mathcal{N}$ in $\mathcal{A}$ with respect to the operator norm, such that
\begin{equation}\label{eq:LO-14}|\mathcal{N}|\ls (C_8(b/a)^2n)^{n^2}.\end{equation}

For every $T\in\mathcal{A}$ we define
$$\Theta_T:=\{\omega\in\Omega_0:\|T:X_{B_m(\omega)}\to X_{B_m(\omega)}\|\ls\gamma\sqrt{n}\}$$
and set
\begin{equation}\label{eq:LO-15}
\Theta =\Omega_0\setminus\bigcup_{T\in\mathcal{N}}\Theta_T.
\end{equation}

\begin{claim}\label{claim:LO-6.16}
Let $T\in {\rm Mix}_n(k,1)$. For every $\omega\in\Theta$ we have
\begin{equation}\label{eq:LO-16}
\|T:X_{B_m(\omega)}\to X_{B_m(\omega)}\|\gr \frac{\gamma}{2}\sqrt{n}.
\end{equation}
\end{claim}

\begin{proof}
Fix $T\in {\rm Mix}_n(k,1)$ and $\omega\in\Theta$. If $T\notin\mathcal{A}$, then \eqref{eq:LO-16} holds, even with $\gamma$ in place of $\gamma/2$.
Thus, we may assume that $T\in\mathcal{A}$. Choose $T_0\in\mathcal{N}$ so that $ \|T-T_0:\ell_2^n\to\ell_2^n\|\ls \delta.$
Since $\omega\in\Theta$, we have $\omega\notin\Theta_{T_0}$ and therefore
$$\|T_0:X_{B_m(\omega)}\to X_{B_m(\omega)}\|>\gamma\sqrt{n}.$$
Since $B_m(\omega)=\operatorname{absconv}\{x_1(\omega),\ldots,x_m(\omega)\}$, there exists $j\in [m]$ such that $\|T_0(x_j(\omega))\|_{B_m(\omega)}>\gamma\sqrt{n};$ 
otherwise we would have $T_0(B_m(\omega))\subseteq \gamma\sqrt{n}\,B_m(\omega)$, contradicting $\omega\notin\Theta_{T_0}$.

As $\|x_j(\omega)\|_{B_m(\omega)}\ls 1$, we obtain
\begin{align}
\label{eq:LO-17}
\|T:X_{B_m(\omega)}\to X_{B_m(\omega)}\|
&\gr \|T(x_j(\omega))\|_{B_m(\omega)} \nonumber \gr \|T_0(x_j(\omega))\|_{B_m(\omega)}-\|(T-T_0)(x_j(\omega))\|_{B_m(\omega)} \nonumber\\
&\gr \gamma\sqrt{n}-\|T-T_0:X_{B_m(\omega)}\to X_{B_m(\omega)}\|\,\|x_j\|_{B_m(\omega)} \nonumber\\
&\gr \gamma\sqrt{n}-\delta (b/a)\sqrt{n}\gr \frac{\gamma}{2}\sqrt{n}.
\end{align}
where in the last step we used \eqref{eq:LO-12} and the choice $\delta\ls a\gamma/(2b)$.
This proves the claim.
\end{proof}

Having proved Claim~\ref{claim:LO-6.16}, it remains to estimate the probability of $\Theta$. 
By \eqref{eq:LO-15},
$$ \mathbb{P}(\Theta)\gr \mathbb{P}(\Omega_0)-\sum_{T\in\mathcal{N}}\mathbb{P}(\Theta_T). $$
Since $\mathcal N\subseteq \mathcal A\subseteq {\rm Mix}_n(k,1)$, Proposition~\ref{prop:LO-6.2} and \eqref{eq:LO-14} yield
$$ \mathbb{P}(\Theta)\gr \mathbb{P}(\Omega_0)-(C_8(b/a)^2n)^{n^2}\binom{m}{\ell}\left(\left(1+\frac{2b \sqrt{n}}{a}\right)e^{-k}\right)^{\ell}. $$
Using the fact that $\binom{m}{\ell}\ls \left(\frac{em}{\ell}\right)^{\ell} \ls (C_9n)^{\ell}$ because $\ell=\left\lceil \frac{m}{2n+3}\right\rceil$ and since $k\gr n/4,$
we get
$$(C_8(b/a)^2n)^{n^2}\binom{m}{\ell}\left(\left(1+\frac{2b\sqrt{n}}{a}\right)e^{-k}\right)^{\ell}\ls \exp\left(C_9n^2\ln n-n\ell /4\right).$$
Since $n\ell \gr m/5$, this last quantity is at most $\exp(-m/10)$ provided that $ m\gr 10C_9n^2(\ln n) $. 
Recalling that $ \mathbb{P}(\Omega_0)\gr 1-\exp(-c\sqrt n),$ we conclude that $ \mathbb{P}(\Theta)\gr 1-\exp(-c'\sqrt n).$
Claim~\ref{claim:LO-6.16} now, together with the choice of $\gamma$, gives \eqref{eq:LO-1}.
\end{proof}

We are now able to prove Theorem~\ref{th:mixing-1}.

\begin{proof}[\textbf{Proof of Theorem~$\mathbf{\ref{th:mixing-1}}$}]
Let $P$ be a projection with $n/4\ls {\rm rank}(P)\ls 3n/4$. 
Then, the operator $2P$ is $(n/4,1)$-mixing.
Hence, Theorem~\ref{th:LO-6.1} shows that
\begin{equation}\label{eq:final}\|2P:X_{B_m(\omega)}\to X_{B_m(\omega)}\|\gr \frac{c_1\sqrt{n}}{\sqrt{\ln(1+m/n)}}\end{equation}
with probability at least $1-\exp(-c_2\sqrt{n})$, uniformly for all such projections $P$, where $c_1,c_2>0$ are absolute constants. 
The main claim of the theorem follows after dividing the constant in the norm estimate by $2$. The lower bound for ${\rm bc}(X)$
is a consequence of \eqref{eq:low-bc}.
\end{proof}

\bigskip

\noindent {\bf Acknowledgement.} The second named author acknowledges support by a PhD scholarship from the National Technical University of Athens.

\bigskip 


\footnotesize
\bibliographystyle{amsplain}

\begin{thebibliography}{100}

\bibitem{AGA-book} \textrm{S.\ Artstein-Avidan, A.\ Giannopoulos and V.\ D.\ Milman},
\textit{Asymptotic Geometric Analysis, Vol. I}, Mathematical Surveys and Monographs, 202. American Mathematical Society, Providence, RI, 2015. xx+451 pp.
\bibitem{AGA-book-2} \textrm{S.\ Artstein-Avidan, A.\ Giannopoulos and V.\ D.\ Milman}, \textit{Asymptotic Geometric Analysis, Vol. II}, Mathematical Surveys and Monographs, 261. American  Mathematical Society, Providence, RI, 2021. xxxvii+645 pp.
\bibitem{Ball-1991} \textrm{K.\ M.\ Ball}, \textit{Normed spaces with a weak Gordon-Lewis property}, Functional analysis (Austin, TX, 1987/1989), 36--47,  Lecture Notes in Math., 1470, Longhorn Notes, Springer, Berlin, 1991.
\bibitem{Ball-Pajor-1990} \textrm{K.\ M.\ Ball and A.\ Pajor}, \textit{Convex bodies with few faces}, Proc. Amer. Math. Soc. 110 (1990), no.~1, 225--231.      
\bibitem{Barany-Furedi-1987} \textrm{I.\ B\'{a}r\'{a}ny and Z.\ F\"{u}redi}, \textit{Computing the volume is difficult}, Discrete Comput. Geom. 2 (1987), no.~4, 319--326. 
\bibitem{Benyamini-Gordon-1981} \textrm{Y.\ Benyamini and Y.\ Gordon}, \textit{Random factorization of operators between Banach spaces}, J. Analyse Math. 39 (1981), 45--74.
\bibitem{Bizeul-2025} \textrm{P.\ Bizeul}, \textit{The slicing conjecture via small ball estimates}, Preprint ({\tt https://arxiv.org/abs/2501.06854}).
\bibitem{Borell-1974} \textrm{C.\ Borell}, \textit{Convex measures on locally convex spaces}, Ark. Mat. {12} (1974), 239--252.
\bibitem{Bourgain-1986} \textrm{J.\ Bourgain}, \textit{On high dimensional maximal functions associated to convex bodies}, Amer. J. Math. 108 (1986), no.~6, 1467--1476. 
\bibitem{BGVV-book} \textrm{S.\ Brazitikos, A.\ Giannopoulos, P.\ Valettas and B-H.\ Vritsiou}, \textit{Geometry of isotropic convex bodies}, Mathematical Surveys and Monographs, 196. American Mathematical Society, Providence, RI, 2014. xx+594 pp.
\bibitem{Dafnis-Giannopoulos-Tsolomitis-2009} \textrm{N.\ Dafnis, A.\ Giannopoulos and A.\ Tsolomitis}, \textit{Asymptotic shape of a random polytope in a convex body},  J. Funct. Anal. 257 (2009), no.~9, 2820--2839. 
\bibitem{Diestel-Jarchow-Tonge-book} \textrm{J.\ Diestel, H.\ Jarchow and A.\ Tonge}, \textit{Absolutely summing operators}, Cambridge Studies in Advanced Mathematics, 43. Cambridge University Press, Cambridge, 1995. xvi+474 pp.
\bibitem{Giannopoulos-Hartzoulaki-2002} \textrm{A.\ Giannopoulos and M.\ Hartzoulaki}, \textit{Random spaces generated by vertices of the cube}, Discrete Comput. Geom. 28 (2002), no.~2, 255--273.
\bibitem{Giannopoulos-VMilman-2000} \textrm{A.\  Giannopoulos and V.\ D.\ Milman}, \textit{Concentration property on probability spaces}, Adv. Math. 156 (2000), no.~1, 77--106.
\bibitem{Giannopoulos-Pafis-Tziotziou-2025} \textrm{A.\ Giannopoulos, M.\ Pafis and N.\ Tziotziou}, \textit{The isotropic constant in the theory of high-dimensional convex bodies}, Bull. Hellenic Math. Soc. 69 (2025), 89--188.
\bibitem{Gluskin-1981} \textrm{E.\ D.\ Gluskin}, \textit{The diameter of the Minkowski compactum is approximately equal to $n$}, Funktsional. Anal. i Prilozhen. 15 (1981), no.~1, 72--73.
\bibitem{Gluskin-1981b} \textrm{E.\ D.\ Gluskin}, \textit{Finite dimensional analogues of spaces without basis}, Dokl. Akad. Nauk SSSR 261 (1981), no.~5, 1046--1050. 
\bibitem{Gluskin-1989}\textrm{E.\ D.\ Gluskin}, \textit{Extremal properties of orthogonal parallelepipeds and their applications to the geometry of Banach spaces}, Math. USSR Sbornik {64} (1989), no.~1, 85--96.
\bibitem{Guan-preprint} Q.~Y.~Guan, \textit{A note on Bourgain's slicing problem}, Preprint. ({\tt https://arxiv.org/abs/2412.09075}).
\bibitem{John-1948} \textrm{F.\ John}, \textit{Extremum problems with inequalities as subsidiary conditions}, Studies and Essays Presented to R. Courant on his 60th Birthday, January 8, 1948, 187--204, Interscience Publishers, New York, 1948.
\bibitem{Kashin-1977} \textrm{B.\ S.\ Kashin}, \textit{Sections of some finite-dimensional sets and classes of smooth functions}, Izv. Akad. Nauk. SSSR Ser. Mat. {41} (1977), no.~2, 334--351.
\bibitem{Klartag-Lehec-2025} B.~Klartag and J.~Lehec, \textit{Affirmative resolution of Bourgain's slicing problem using Guan's bound}, Geom. Funct. Anal. 35 (2025), no.~4, 1147--1168.
\bibitem{LMOTJ-2007} \textrm{R.\ Lata{\l}a, P.\ Mankiewicz, K.\ Oleszkiewicz and N.\ Tomczak-Jaegermann}, \textit{Banach--Mazur distances and projections on random subgaussian polytopes}, Discrete Comput. Geom. 38 (2007), no.~1, 29--50.
\bibitem{Litvak-Pajor-Rudelson-Tomczak-2005} \textrm{A.\ E.\ Litvak, A.\ Pajor, M.\ Rudelson and N.\ Tomczak-Jaegermann}, \textit{Smallest singular value of random matrices and geometry of random polytopes},   Adv. Math. 195 (2005), no.~2, 491--523.
\bibitem{Lutwak-Yang-Zhang-2000} \textrm{E.\ Lutwak, D.\ Yang and G.\ Zhang}, \textit{$L_p$ affine isoperimetric inequalities}, Differential Geom. 56 (2000), no.~1, 111--132. 
\bibitem{Mankiewicz-1984} \textrm{P.\ Mankiewicz}, \textit{Finite dimensional spaces with symmetry constant of order $\sqrt{n}$}, Studia Math. 79 (1984), no.~2, 193--200.
\bibitem{Mankiewicz-Tomczak-handbook} \textrm{P.\ Mankiewicz and N.\ Tomczak-Jaegermann}, \textit{Quotients of finite-dimensional Banach spaces; random phenomena}, Handbook of the geometry of Banach spaces, Vol.~2, 1201--1246, North-Holland, Amsterdam, 2003. 
\bibitem{MT1} \textrm{P.\ Mankiewicz and N.\ Tomczak-Jaegermann}, \textit{Geometry of families of random projections of symmetric convex bodies}, Geom. Funct. Anal. 11 (2001), no.~6, 1282--1326.
\bibitem{Mendelson-Pajor-Rudelson-2005} \textrm{S.\ Mendelson, A.\ Pajor and M.\ Rudelson},  \textit{The geometry of random $\{-1,1\}$-polytopes}, Discrete Comput. Geom. 34 (2005), no.~3, 365--379.
\bibitem{Paouris-2006} \textrm{G.\ Paouris}, \textit{Concentration of mass in convex bodies},  Geom. Funct. Anal. 16 (2006), no.~5, 1021--1049. 
\bibitem{Paouris-2012} \textrm{G.\ Paouris}, \textit{Small ball probability estimates for log-concave measures}, Trans. Amer. Math. Soc. 364 (2012), no.~1, 287--308.
\bibitem{Pisier-book} \textrm{G.\ Pisier}, \textit{The Volume of Convex Bodies and Banach Space Geometry},  Cambridge Tracts in Mathematics, 94. Cambridge University Press, Cambridge, 1989. xvi+250 pp.
\bibitem{Szarek-1983} \textrm{S.\ J.\ Szarek}, \textit{The finite dimensional basis problem, with an appendix on nets of Grassman manifold}, Acta Math. 151 (1983), no.~3-4, 153--179. 
\bibitem{Szarek-1986a} \textrm{S.\ J.\ Szarek}, \textit{On the existence and uniqueness of complex structure and spaces with ``few" operators}, Trans. Amer. Math. Soc. 293 (1986), no.~1, 339--353.
\bibitem{Szarek-Tomczak-2007} \textrm{S.\ J.\ Szarek and N.\ Tomczak-Jaegermann}, \textit{Decoupling weakly dependent events}, Geometric aspects of functional analysis, 297--303, Lecture Notes in Math., 1910, Springer, Berlin, 2007. 
\bibitem{Tomczak-book} \textrm{N.\ Tomczak-Jaegermann}, \textit{Banach--Mazur Distances and Finite Dimensional Operator Ideals}, Pitman Monographs and Surveys in Pure and Applied Mathematics, 38. Longman Scientific \& Technical, Harlow; copublished in the United States with John Wiley \& Sons, Inc., New York, 1989. xii+395 pp. 
\end{thebibliography}

\bigskip

\thanks{\noindent {\bf Keywords:} Banach--Mazur distance; Gluskin spaces; Random polytopes; Isotropic log-concave probability measures.}

\smallskip

\thanks{\noindent {\bf 2020 MSC:} Primary 46B06; Secondary 46B20, 52A40, 52A23, 60D05.}

\bigskip

\bigskip 

\medskip 

\noindent \textsc{Apostolos \ Giannopoulos}: School of Applied Mathematical and Physical Sciences, National Technical University of Athens, Department of Mathematics, Zografou Campus, GR-157 80, Athens, Greece.

\smallskip

\noindent \textit{E-mail:} \texttt{apgiannop@math.ntua.gr}

\bigskip

\noindent \textsc{Antonios \ Hmadi}: School of Applied Mathematical and Physical Sciences, National Technical University of Athens, Department of Mathematics, Zografou Campus, GR-157 80, Athens, Greece.

\smallskip

\noindent \textit{E-mail:} \texttt{ahmadi@mail.ntua.gr}

\end{document}